\newcommand{\blind}{0}
\newcommand{\E}{\ensuremath{\mathbf{E}}}
\DeclareMathOperator*{\argmin}{arg\,min}
\newtheorem{theorem}{Theorem}
\newtheorem{proposition}{Proposition}
\newtheorem{lemma}{Lemma}
\newtheorem{corollary}{Corollary}
\newtheorem{remark}{Remark}
\newtheorem{assumption}{Assumption}
\begin{document}

\def\spacingset#1{\renewcommand{\baselinestretch}%
{#1}\small\normalsize} \spacingset{1}


\if0\blind
{
  \title{\bf Functional Partial Least-Squares: Adaptive Estimation and Inference\thanks{An earlier version of this paper circulated under the title ``Theoretical Comparison of Functional Principal Component Analysis and Functional Partial Least Squares." The authors thank participants at the SCSE Conference in Quebec (2019), the EC² Conference in Paris (2020), the NBER-NSF Time Series Conference at Rice University (2021), the ESIF Economics and AI+ML Meeting in Ithaca (2024), the Triangle Econometrics Conference (2024), and Econometrics in Rio (2024) for their valuable feedback. We are especially grateful to Ryan Borhani for outstanding research assistance. Marine Carrasco acknowledges partial financial support from NSERC and FQRSC.}}
  \author{Andrii Babii\\
    Department of Economics, UNC-Chapel Hill \\
    and \\
    Marine Carrasco\\
    Department of Economics, University of Montreal \\
    and \\
    Idriss Tsafack\\
    Department of Economics, University of Montreal \\}
  \maketitle
} \fi

\if1\blind
{
  \bigskip
  \bigskip
  \bigskip
  \begin{center}
    {\LARGE\bf Functional Partial Least-Squares: Adaptive Estimation and Inference\thanks{An earlier version of this paper circulated under the title ``Theoretical Comparison of Functional Principal Component Analysis and Functional Partial Least Squares." The authors thank participants at the SCSE Conference in Quebec (2019), the EC² Conference in Paris (2020), the NBER-NSF Time Series Conference at Rice University (2021), the ESIF Economics and AI+ML Meeting in Ithaca (2024), the Triangle Econometrics Conference (2024), and Econometrics in Rio (2024) for their valuable feedback. We are especially grateful to Ryan Borhani for outstanding research assistance. Marine Carrasco acknowledges partial financial support from NSERC and FQRSC.}}
\end{center}
  
} \fi

\bigskip
\begin{abstract}
We study the functional linear regression model with a scalar response and a Hilbert space-valued predictor, a canonical example of an ill-posed inverse problem. We show that the functional partial least squares (PLS) estimator attains nearly minimax-optimal convergence rates over a class of ellipsoids and propose an adaptive early stopping procedure for selecting the number of PLS components. In addition, we develop new test that can detect local alternatives converging at the parametric rate which can be inverted to construct confidence sets. Simulation results demonstrate that the estimator performs favorably relative to several existing methods and the proposed test exhibits good power properties. We apply our methodology to evaluate the nonlinear effects of temperature on corn and soybean yields.
\end{abstract}
\noindent%
{\it Keywords:} Functional Partial Least-Squares, Inference, Rate Optimal and Adaptive Estimation, Functional Linear Regression, Climate Science.
\vfill

\newpage
\spacingset{1.9} 

\section{Introduction}
With the increasing availability of data, functional data analysis has become widely applied across fields such as chemometrics, climate science, and economics. In this paper, we study a linear functional regression model with a scalar response $Y$ and functional predictor $X$: 
\begin{equation}                                            
	Y=\int_{0}^{1}\beta (s)X(s)\mathrm{d}s+\varepsilon ,\qquad \mathbf{E}%
	[\varepsilon X]=0.  \label{eq:regression}
\end{equation}

The primary objective is to estimate the functional slope $\beta $ to predict the response $Y$. Both $X$ and $\beta $ lie in an infinite-dimensional Hilbert space, and the high dimensionality of $\beta$ leads to an ill-posed inverse problem. Consistent estimation of the slope coefficient thus requires either a dimension-reduction technique or some form of regularization.  

Two main approaches have been popular in the functional data analysis literature: penalization methods, e.g.,  \cite{cardot2003spline}, \cite{li2007rates}, \cite{crambes2009smoothing}, \cite{yuan2010reproducing}, \cite{cai2012minimax}, \cite{florens2015instrumental}, and functional principal component analysis (PCA), e.g., \cite{cardot1999functional}, \cite{ramsay2002applied}, \cite{cai2006prediction}, \cite{hall2007methodology}. The PCA-based approach approximates $\beta$ using a finite expansion over the leading principal components, which correspond to the eigenfunctions of the covariance operator of X. As noted by \cite{jolliffe1982note}, this method performs well when the response Y is primarily correlated with the leading principal components. Moreover, accurate recovery of the principal components requires a sufficient separation between the eigenvalues of the covariance operator. 

In this paper, we explore an alternative method, functional partial least squares (PLS). PLS is a widely used technique in the statistical learning, see \cite{friedman2009elements}, \cite{frank1993}; economics and finance, see \cite{carrasco2016sample} and \cite{kelly2015three}; chemometrics, see  \cite{helland1988structure}, \cite{wold1984collinearity}. However, it has been somewhat less commonly employed in the empirical functional data analysis. The method constructs components as linear transformations of the predictors X designed to maximize their correlation with the response Y. As a result, fewer components are typically needed to achieve good predictive performance compared to PCA. It was first introduced in \cite{preda2005pls} to solve the high-dimensional problems with multicolinearity associated with the scalar-on-function linear model. Several interesting papers compare PCA and various functional PLS estimators with data-driven choice of latent components in simulations; see \cite{reiss2007functional}, \cite{Kraemer2008}, \cite{baillo2009}, \cite{aguilera2010basis}, \cite{febrero2017overview}, and \cite{saricam2022partial}. This literature concludes that the prediction ability of functional PCA and PLS approaches is similar but functional PLS requires fewer components and provides a much more accurate estimation of the parameter function than PCA.

However, the existing literature lacks supporting theoretical results, which are challenging to obtain given that the PLS estimator depends non-linearly on the response variable and is computed iteratively. An important step toward the theoretical analysis of functional PLS was made by \cite{delaigle2012methodology}, who introduced an alternative PLS (APLS). Nevertheless, it remains unknown whether: (1) functional PLS achieves (nearly) minimax-optimal convergence rates under weak conditions; (2) a rate-adaptive method to select the number of PLS components exists; and (3) inference for functional PLS can be conducted.\footnote{Deriving an approximation to the distribution of PLS is particularly challenging because PLS is a nonlinear estimator converging at a nonparametric rate.} This paper fills these gaps and provides a comprehensive, rigorous theoretical analysis of functional PLS.

The paper makes several original contributions. First, we derive the convergence rate of our estimator and the prediction error under a source condition. This condition measures the complexity of the problem through the so-called degree of ill-posedness and relates the slope coefficient $\beta$ to the spectral decomposition of the covariance operator. Our results do not require assuming a separation between adjacent eigenvalues, as is common for PCA (see, e.g., \cite{hall2007methodology}), and hold even in the presence of repeated eigenvalues. Second, we establish a lower bound on the minimax convergence rate and show that our estimator is (nearly) minimax-optimal. Because the optimal number of PLS components depends on the unknown degree of ill-posedness, we propose an adaptive early stopping rule for selecting the number of components. We show that this single, early stoping rule yields a rate-optimal PLS estimator \textit{simultaneously} for estimation and prediction errors with high probability. We also characterize how the selected number of components evolves with sample size under various scenarios. 

Lastly, we develop new test and confidence sets for PLS. We show that the test can detect alternatives converging at a parametric rate. Interestingly, while early stopping is crucial for estimation and prediction to prevent overfitting, it should be avoided for inference. An efficient iterative algorithm to compute the estimator is provided. Our simulation results reveal that the estimator outperforms several alternative methods combined with cross-validation in terms of estimation error and remains competitive for prediction. We also find that our test has excellent power properties with samples as small as $n=100$ observations.

To establish our theoretical results, we rely on the inverse problem literature and exploit the close connection between PLS and the conjugate gradient method, as presented in \cite{hanke1995conjugate}, \cite{engl1996regularization}, and \cite{blanchard2016convergence}. To the best of our knowledge, this connection has not previously been established in the functional data analysis literature. Recently and independently of our work, \cite{gupta2023convergence} proposed an estimator in a reproducing kernel Hilbert space (RKHS) generated by a specified kernel and used conjugate gradient methods to regularize the solution. Although their method is similar to ours, the results are not directly comparable. Their main result concerns the convergence rate of an estimator in an RKHS, established under different assumptions. In particular, they assume a polynomial decay rate of certain operator eigenvalues, while we do not require any decay assumptions. Their source condition is also different, and it is unclear which one is weaker. Similarly, \cite{lin2021kernel} studied conjugate gradient methods in the context of linear approximations to nonparametric regression with randomized sketches and Nyström sampling. While their results apply to general Hilbert spaces under stronger assumptions, they do not explicitly connect to the functional data analysis literature; for example, their simulations focus on linear approximations of $\E[Y|X]=f(X)$ with real-valued data $Y,X\in\mathbb{R}$ and Sobolev RKHS kernels. Neither paper shows that the proposed estimators are rate-adaptive or provide a practical early stopping rule for selecting the number of PLS components.

To illustrate the practical relevance of our results, we apply our method to climate science. Using a fine-grained county-level dataset of U.S. crop yields and temperatures recorded over 70 years, we estimate the impact of temperature on crop yields. We find that the critical temperature at which annual crop yields begin to decline is around 30${}^\circ$C, consistent with \cite{schlenker2009nonlinear}, who relied on highly parameterized least-squares estimators. Our method provides additional insights by examining how the temperature effect curves have evolved over time. Interestingly, we find that the detrimental effects of high temperatures on corn and soybean yields have diminished over time, likely reflecting farmers’ adaptive actions, including the use of more resilient crops and improved irrigation systems. However, this finding is not conclusive when accounting for statistical uncertainty due to the small number of observed extreme temperatures.

The rest of the paper is organized as follows. Section~\ref{sec:estimator} introduces the functional regression model and the functional PLS estimator. Section~\ref{sec:adaptive} establishes the theoretical properties, including the convergence rates for estimation and prediction errors, the minimax lower bound, and the adaptivity of the early stopping rule. Section~\ref{sec:inference} develops inference for PLS. Section~\ref{sec:mc} presents a Monte Carlo study. Section~\ref{sec:application} discusses an empirical application to nonlinear temperature effects in agriculture. Section~\ref{sec:conc} concludes. Supplementary Material provides all proofs, comparisons to alternative estimators, and additional simulation results.

\section{Functional Regression and PLS Estimator}\label{sec:estimator}
\subsection{Functional Linear Regression}
Throughout the paper, we consider a generalized version of the functional linear regression model
\begin{equation*}
	Y = \langle \beta,X\rangle + \varepsilon,\qquad \mathbf{E} [\varepsilon X] =
	0,
\end{equation*}
where $(Y,X)\in \mathbb{R}\times \mathbb{H}$, $\beta \in \mathbb{H}$ is the unknown functional slope coefficient, and $(\mathbb{H}%
,\langle.,.\rangle)$ is a separable Hilbert space with the induced norm $\|.\|=\sqrt{\langle.,.\rangle}$. The model in equation (\ref{eq:regression}) corresponds to the Hilbert space of square integrable functions, $\mathbb{H}=L_2[0,1]$, with the norm induced by the inner product $\langle f,g\rangle = \int_0^1f(s)g(s)$.

If $\mathbb{E}[X]=0$, the covariance restriction $\mathbf{E} [\varepsilon X]=0$ implies that the slope coefficient $\beta \in \mathbb{H}$ solves the moment condition 
\begin{equation}  \label{eq:moment}
	r := \mathbf{E} [YX] = \mathbf{E} [(X\otimes X)\beta] =: K\beta,
\end{equation}
where $r\in \mathbb{H}$ and $K:\mathbb{H}\to \mathbb{H}$ is a compact covariance operator with summable eigenvalues whenever $\mathbf{E}
\|X\|^2<\infty$. It is well-known that the inverse operator $K^{-1}$ is discontinuous and solving the equation $K\beta = r$ for $\beta$ is an
ill-posed inverse problem; see \cite{carrasco2007linear},  \cite{engl1996regularization}, and \cite{hanke1995conjugate}.

Roughly speaking, there are two popular strategies to regularize such problems:

\begin{itemize}
\item[(a)] replace $K^{-1}$ with a continuous operator $R_\alpha(K)$ for some function $R_\alpha:\mathbb{R}_+\to \mathbb{R}_+$ satisfying  	$\lim_{\alpha \to 0^+}R_\alpha(\lambda)=\lambda^{-1}$. 
\item[(b)] solve the problem in a finite-dimensional subspace $\mathbb{H}_m\subset \mathbb{H}$, spanned by some fixed basis vectors $	h_1,h_2,\dots,h_m\in \mathbb{H}$. 
\end{itemize}

Examples of (a) include the Tikhonov regularization when $R_\alpha(\lambda)=(\alpha+\lambda)^{-1}$, the spectral cut-off when $R_\alpha(\lambda)=\lambda^{-1}\mathbf{1}_{\lambda \geq \alpha}$ and the 
Landweber iterations. On the other hand, the estimators in group (b), often solve the
empirical least-squares problem 
\begin{equation}  \label{eq:ls_problem}
	\min_{b\in \mathbb{H}_m}\| \mathbf{y} - T_nb\|^2_n,
\end{equation}
where $\|v\|_n^2=v^\top v/n,v\in \mathbb{R}^n$ and we put $\mathbf{y}%
=(Y_1,\dots,Y_n)^\top$ and 
\begin{equation*}
	\begin{aligned} T_n: \mathbb{H} & \to \mathbb{R}^n \\ b & \mapsto (\langle
		X_1,b\rangle,\dots,\langle X_n,b\rangle)^\top \end{aligned}
\end{equation*}
for an i.i.d. sample $(Y_i,X_i)_{i=1}^n$. The basis $(h_j)_{j=1}^m$ spanning $\mathbb{H}_m$ can be either fixed (e.g. Fourier, polynomials, splines,
wavelets) or adaptively constructed from the data. 

The data-driven bases are especially attractive since they can
adapt to the features of the population represented by the data and can
approximate the slope parameter $\beta \in \mathbb{H}$ more efficiently; see 
\cite{delaigle2012methodology}. The principal component analysis (PCA)%
\footnote{Using the PCA basis is also related to the spectral cut-off method described
	in (a).} and the partial least-squares (PLS) are two widely used methods to
construct adaptive bases in practice. The PCA basis is constructed by
identifying the directions in $\mathbb{H}$ where $X$ varies the most while
the PLS basis is constructed in a supervised way taking into account the
response variable as well. While the first $m$ elements of the PCA basis $%
h_1,\dots,h_m$ usually capture most of the variation of $X$, these are not
necessarily the most important vectors for approximating $\beta$ or
predicting the response variable $Y$. It is easy to find empirical examples,
where some of the last few low-variance components \textit{are} important;
see \cite{jolliffe1982note} who documented the issue on datasets used in
economics, climate science, chemical engineering, and meteorology.

\subsection{PLS estimator}

{\normalsize The PLS estimator constructs a data-driven basis iteratively maximizing the covariance with the response variable $Y$; see \cite{preda2005pls} who introduced it in the functional data analysis setting. The iterative nature of the estimator
makes it difficult to analyze its statistical properties. This prompted \cite%
{delaigle2012methodology} to formulate an alternative functional PLS solving
the problem in equation~(\ref{eq:ls_problem}) over the so-called Krylov
subspace 
\begin{equation*}
	\mathbb{H}_m = \mathrm{span}\left \{ \hat r, \hat K \hat r, \hat K^2 \hat
	r,\dots, \hat K^{m-1}\hat r \right \},
\end{equation*}
where 
\begin{equation*}
	\hat r = \frac{1}{n}\sum_{i=1}^nY_iX_i\qquad \text{and}\qquad \hat K = \frac{%
		1}{n}\sum_{i=1}^nX_i\otimes X_i
\end{equation*}
are the estimators of $r$ and $K$; see also \cite{wold1984collinearity}, \cite{helland1988structure}, and \cite{phatak2002exploiting} for the link between PLS and Krylov subspaces.

In this
paper, we study a version of the PLS estimator with $m\geq 1$ components,
denoted $\hat \beta_m$, characterized as a solution to the least-squares
problem 
\begin{equation*}
	\min_{b\in \mathbb{H}_m}\|T_n^*(\mathbf{y} - T_nb)\|^2
\end{equation*}
over the Krylov subspace $\mathbb{H}_m$. The least-squares objective
function is weighted by the adjoint operator of $T_n$ 
\begin{equation*}
	\begin{aligned} T_n^*: \mathbb{R}^n & \to \mathbb{H} \\ 
		\phi=(\phi_1,\dots,\phi_n)^\top & \mapsto
		\frac{1}{n}\sum_{i=1}^nX_i\phi_i
	\end{aligned}
\end{equation*}
and corresponds to minimizing the first-order conditions to the problem in
equation~(\ref{eq:ls_problem}), often called the normal equations.
Equivalently, $\hat \beta_m$ fits the empirical counterpart to the equation (%
\ref{eq:moment}) 
\begin{equation}  \label{eq:pls_problem}
	\min_{b\in \mathbb{H}_m}\left \| \hat r - \hat Kb \right \|^2
\end{equation}
as it is easy to see that $\hat r = T_n^*\mathbf{y}$ and $\hat K = T_n^*T_n$. Importantly, the PLS estimator formalized in equation~(\ref{eq:pls_problem}) corresponds to the conjugate gradient method with a self-adjoint operator $\hat K$, cf. \cite{hestenes1952methods}, known for its excellent regularization properties; see also \cite{hanke1995conjugate} and \cite{nemirovski1986regularizing}.\footnote{The method of conjugate gradients is one of the most efficient algorithms for solving high-dimensional systems of linear equations; see also \cite[Chapter 5]{nocedal1999numerical} and references therein.} We provide a more detailed
comparison between the two PLS estimators in the Supplementary Material,
Section~\ref{appn:suppl}. A related formulation of the PLS in reproducing
kernel Hilbert spaces (RKHS) was recently studied in an independent work of 
\cite{gupta2023convergence} who focus on the estimation error only and
impose assumptions different from ours. Our work can be seen as using a
kernel naturally adapted to the data which is unknown in practice. }

{\normalsize The estimator is uniquely defined for every $m\leq n_*$, where $%
n_*$ is the number of distinct non-zero eigenvalues of $\hat K$; see
Proposition~\ref{prop:uniquness} in the Supplementary Material. It is also
easy to see that for every $m\geq 1$, we have $\hat \beta_m =
\hat P_m(\hat K)\hat r$ for a polynomial $\hat P_m(\hat K) =
\sum_{j=1}^ma_j\hat K^{j-1}$ with coefficients $\mathbf{a}%
:=(a_1,\dots,a_m)^\top$, where $\hat P_0=0$ and $\hat \beta_0=0$.  The coefficients vector solves the system of $m$ linear equations 
\begin{equation*}
	\mathbf{K}\mathbf{a}= \mathbf{r},
\end{equation*}
where $\mathbf{K}:=\langle \hat K^j\hat r,\hat K^k\hat r \rangle_{1\leq
	j,k\leq m}$ and $\mathbf{r}:= \langle \hat K^j\hat r,\hat r\rangle_{1\leq
	j\leq m}$. From the practical point of view, it is more efficient to use an
iterative conjugate gradient algorithm that bypasses the (potentially
unstable) matrix inversion with an iterative multiplication by the operator $%
\hat K$; see Algorithm~\ref{algor1} in Section~\ref{sec:mc}. }

\section{Adaptive Estimation}\label{sec:adaptive}
In this section, we will show that the functional PLS estimator achieves the (nearly) optimal convergence rate on a class of ellipsoids. We consider an early stopping rule to select the number of PLS components and show that it adapts to the complexity of the ellipsoid. Lastly, we study how rapidly, the number of selected components increases with the sample size and make some comparisons to the PCA estimator.

\subsection{Optimal Convergence Rates}
Since the operators $K:\mathbb{H}\to\mathbb{H}$ and $\hat K:\mathbb{H}\to \mathbb{H}$ are self-adjoint and compact, by the spectral theorem
\begin{equation*}
	K = \sum_{j=1}^\infty\lambda_jv_j\otimes v_j\qquad \text{and}\qquad \hat K = \sum_{j=1}^{n}\hat \lambda_j \hat v_j\otimes \hat v_j,
\end{equation*}
where $\lambda_1\geq \lambda_2\geq \dots\geq 0$ and $\hat\lambda_1\geq \hat\lambda_2\geq \dots\geq \hat\lambda_{n}\geq0$ are the eigenvalues of $K$ and $\hat K$ and $(v_j)_{j=1}^\infty$ and $(\hat v_j)_{j=1}^{n}$ are the corresponding eigenvectors; see \cite{kress1999linear}, Theorem 15.16. Note that the sample covariance operator $\hat K$ is a finite-rank operator with at most $n_*\leq n$ distinct non-zero eigenvalues.

For any bounded and measurable function $\phi:\mathbb{R}_+\to\mathbb{R}_+$, we define functions of operators through their spectral decompositions:
\begin{equation*}
	\phi(K) := \sum_{j=1}^\infty\phi(\lambda_j)v_j\otimes v_j\qquad\text{and}\qquad \phi(\hat K) := \sum_{j=1}^{n_*} \phi(\hat\lambda_j)\hat v_j\otimes \hat v_j.
\end{equation*}
These definitions  are commonly used in the inverse problems literature; see \cite{engl1996regularization}.

The following inequalities for the operator norm will be often used:
\begin{equation}\label{eq:operator_bound}
	\|\phi(K)\|_{\rm op} \leq \sup_{\lambda\in[0,\lambda_1]}|\phi(\lambda)|\qquad \text{and}\qquad \|\phi(\hat K)\|_{\rm op} \leq \sup_{\lambda\in[0, \hat \lambda_1]}|\phi(\lambda)|,
\end{equation}
where $\|A\|_{\rm op} =\sup_{\|x\|=1}\|Ax\|$.

We shall introduce several relatively mild assumptions on the distribution of the data next.
\begin{assumption}\label{as:data}
	$(X_{i},Y_{i})_{i=1}^n$ are i.i.d.\ copies of  $(X,Y)$ with $\E[X]=0$, $\E\|X\|^4<\infty$, and $\E[\varepsilon^2|X]\leq \sigma^2<\infty$.
\end{assumption}
Assumption~\ref{as:data} imposes mild restrictions on the data-generating process. Note that $\E\|X\|^4<\infty$ is satisfied when $X$ is a Gaussian process in $\mathbb{H}$. It implies that $K$ is a nuclear operator and, hence, compact.

\begin{assumption}\label{as:id}
	The operator $K:\mathbb{H}\to\mathbb{H}$ does not have zero eigenvalues.
\end{assumption}
Assumption~\ref{as:id} ensures that the slope parameter $\beta$ is identified. If this assumption is violated, the focus would shift to the identified component of $\beta$ within the orthogonal complement of the null space of $K$; see \cite{babii2017completeness} and \cite{engl1996regularization}.

\begin{assumption}\label{as:complexity}
	For some $\mu,R,C>0$,	the slope parameter $\beta$ and the operator $K$ belong to the class
	\begin{equation*}
		\mathcal{S}(\mu,R,C) = \left\{\beta\in\mathbb{H},\; K:\mathbb{H}\to\mathbb{H}:\quad \sum_{j=1}^\infty\frac{\langle \beta,v_j\rangle^2}{\lambda_j^{2\mu}} \leq R^2,\quad \sum_{j=1}^\infty\lambda_j\leq C \right\}.
	\end{equation*}
\end{assumption}
Assumption~\ref{as:complexity} describes the complexity of the ill-posed inverse problem in terms of the smoothness of $\beta$ and the smoothing properties of the operator $K$. The parameter $\mu$ is known as the degree of ill-posedness. It restricts the rate of decline of the generalized Fourier coefficients $\langle \beta,v_j\rangle_{j\geq 1}$ relatively to the eigenvalues of $K$. A larger value of $\mu$ means that it is easier to estimate the slope coefficient $\beta$; see also \cite{carrasco2007linear}. Recall also that the summability of eigenvalues holds whenever $\E\|X\|^2<\infty$. Note that Assumption~\ref{as:complexity} is weaker than what is typically used to analyze the PCA estimators and does not require restricting the spacing between eigenvalues, cf. \cite{hall2007methodology}.

Consider now the so-called residual polynomial $\hat Q_m(\lambda)=1-\lambda\hat P_m(\lambda)$, deriving its name from the identity $\hat r - \hat K\hat\beta_m = \hat Q_m(\hat K)\hat r$. It is known that the polynomial, $\hat Q_m$, has $m$ distinct real roots, denoted $\hat\theta_1>\hat\theta_2>\dots>\hat\theta_m>0$. The sum of inverse of these roots,
\begin{equation*}
	|\hat Q_m'(0)| = \sum_{j=1}^m\frac{1}{\hat\theta_j},
\end{equation*}
plays an important role in the analysis of the conjugate gradient regularization; see Lemma~\ref{lemma:polynomials} in the Supplementary Material. 

Our first result characterizes the convergence rate of the estimation and prediction errors of the PLS estimator.
\begin{theorem}\label{thm:pls_rate}
	Suppose that Assumptions~\ref{as:data}, \ref{as:id}, and \ref{as:complexity} are satisfied. Then for every $s\in[0,1]$, we have
	\begin{equation*}
		\left\|K^s(\hat{\beta}_m - \beta)\right\|^2  = O_P\left(|\hat Q_m'(0)|^{2(1-s)}n^{-1} + |\hat Q_m'(0)|^{-2(\mu+s)} + |\hat Q'_m(0)|^{-2s}n^{-\mu\wedge 1}\right),
	\end{equation*}
	provided that $|\hat Q_m'(0)|=O_P(n^{1/2})$.
\end{theorem}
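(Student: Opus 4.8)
The plan is to decompose the error $K^s(\hat\beta_m-\beta)$ using the polynomial representation $\hat\beta_m=\hat P_m(\hat K)\hat r$ and the residual polynomial $\hat Q_m(\lambda)=1-\lambda\hat P_m(\lambda)$. Writing $\hat r = \hat K\beta + (\hat r - \hat K\beta)$, one gets
\begin{equation*}
	\hat\beta_m - \beta = \hat P_m(\hat K)\hat K\beta - \beta + \hat P_m(\hat K)(\hat r - \hat K\beta) = -\hat Q_m(\hat K)\beta + \hat P_m(\hat K)(\hat r - \hat K\beta).
\end{equation*}
Applying $K^s$ and inserting $\hat K^s$ as an intermediary, I would split into (i) a \emph{noise term} $K^s\hat P_m(\hat K)(\hat r - \hat K\beta)$, (ii) an \emph{approximation/bias term} $K^s\hat Q_m(\hat K)\beta$, and (iii) \emph{operator perturbation terms} of the form $(K^s - \hat K^s)(\cdots)$ that arise from replacing $K^s$ by $\hat K^s$. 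The noise input $\hat r - \hat K\beta = \frac1n\sum_i X_i\varepsilon_i + (\text{terms from }\hat K - K \text{ acting on }\beta)$ has $\|\hat r - \hat K\beta\| = O_P(n^{-1/2})$ under Assumption~\ref{as:data}, and $\|\hat K - K\|_{\rm op} = O_P(n^{-1/2})$ by the same fourth-moment bound.

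For the \textbf{noise term}, I would use the key property of the conjugate gradient residual polynomials — supplied by Lemma~\ref{lemma:polynomials} in the Supplementary Material — which controls $\sup_{\lambda}\lambda^{s}|\hat P_m(\lambda)|$ in terms of $|\hat Q_m'(0)|$. Concretely, one expects a bound like $\sup_{\lambda\in[0,\hat\lambda_1]}|\lambda^{1-s}\hat P_m(\lambda)^2| \lesssim |\hat Q_m'(0)|^{2(1-s)}$; combined with $\|\hat K^s\hat P_m(\hat K)(\hat r-\hat K\beta)\|^2 \le \big(\sup_\lambda \lambda^{2s}\hat P_m(\lambda)^2\big)\|\hat r - \hat K\beta\|^2$ and the operator-norm inequalities \eqref{eq:operator_bound}, this yields the $|\hat Q_m'(0)|^{2(1-s)}n^{-1}$ contribution. (Here the interpolation between $s=0$ and $s=1$ is what produces the exponent $2(1-s)$; the case $s=1$ is the prediction error where the polynomial degree is fully absorbed by one power of $\hat K$.) For the \textbf{bias term}, I would invoke the source condition: on $\mathcal{S}(\mu,R,C)$ we can write $\beta = K^\mu w$ with $\|w\|\le R$, so $K^s\hat Q_m(\hat K)\beta$ is governed by $\sup_\lambda \lambda^{\mu+s}|\hat Q_m(\lambda)|$, which Lemma~\ref{lemma:polynomials} bounds by $|\hat Q_m'(0)|^{-(\mu+s)}$ — the standard qualification-free bound for conjugate gradients. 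This gives the $|\hat Q_m'(0)|^{-2(\mu+s)}$ term. The catch is that the source condition is stated for $K$, not $\hat K$, so one must pass from $K^\mu$ to $\hat K^\mu$ at the cost of a perturbation $\|K^\mu - \hat K^\mu\|_{\rm op}$, which (by operator monotonicity / Lipschitz bounds for $t\mapsto t^{\mu\wedge 1}$) is $O_P(n^{-(\mu\wedge 1)/2})$; after multiplication by $\sup_\lambda\lambda^s|\hat P_m(\lambda)| \lesssim |\hat Q_m'(0)|^{s}$-type factors this produces exactly the third term $|\hat Q_m'(0)|^{-2s}n^{-\mu\wedge 1}$ — wait, more carefully this cross term is $|\hat Q_m'(0)|^{-2s} n^{-\mu\wedge 1}$ only after the powers cancel appropriately, which I would verify by tracking the exponents in each sub-bound.

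The \textbf{main obstacle} is the nonlinearity: $\hat Q_m$ and $|\hat Q_m'(0)|$ are data-dependent and random, so one cannot simply take expectations, and the usual bias–variance tradeoff is replaced by statements that hold on the random event $\{|\hat Q_m'(0)| = O_P(n^{1/2})\}$. The technical heart is therefore Lemma~\ref{lemma:polynomials}, which must deliver \emph{pathwise} polynomial inequalities of the form $\lambda^{\nu}|\hat Q_m(\lambda)| \le c_\nu |\hat Q_m'(0)|^{-\nu}$ and $\lambda^{\nu}|\hat P_m(\lambda)| \le c_\nu |\hat Q_m'(0)|^{1-\nu}$ valid simultaneously over $\lambda\in[0,\hat\lambda_1]$ — these are classical for conjugate gradients (Hanke 1995) and rely on the roots $\hat\theta_j$ being real, positive, and interlacing. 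Once these are in hand, the proof is a careful bookkeeping exercise: bound each of the three error pieces using \eqref{eq:operator_bound}, the polynomial inequalities, the stochastic orders $\|\hat r - \hat K\beta\| = O_P(n^{-1/2})$ and $\|\hat K^{\mu\wedge 1} - K^{\mu\wedge 1}\|_{\rm op} = O_P(n^{-(\mu\wedge 1)/2})$, and the side condition $|\hat Q_m'(0)| = O_P(n^{1/2})$ to ensure all perturbation remainders are lower-order, then collect terms. I would also double-check that the $s>0$ versus $s=0$ distinction (presence or absence of the $\hat K^s$ insertion) is handled uniformly, and that $\mu\wedge 1$ rather than $\mu$ appears because operator power maps are only Lipschitz up to exponent $1$.
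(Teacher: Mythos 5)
Your algebraic starting point ($\hat\beta_m-\beta=-\hat Q_m(\hat K)\beta+\hat P_m(\hat K)(\hat r-\hat K\beta)$), the use of the source condition $\beta=K^\mu w$, the $O_P(n^{-1/2})$ bounds on $\hat r-\hat K\beta$ and $\hat K-K$, and the $\mu\wedge1$ coming from the operator-power perturbation are all in line with the paper. But there is a genuine gap at the step you yourself flag as the ``technical heart'': you assume Lemma~\ref{lemma:polynomials} delivers bounds of the form $\lambda^{\nu}|\hat Q_m(\lambda)|\le c_\nu|\hat Q_m'(0)|^{-\nu}$ and $\lambda^{\nu}|\hat P_m(\lambda)|\le c_\nu|\hat Q_m'(0)|^{1-\nu}$ \emph{uniformly over} $\lambda\in[0,\hat\lambda_1]$. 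No such global bounds hold for conjugate-gradient residual polynomials: $\hat Q_m(\lambda)=\prod_{j}(1-\lambda/\hat\theta_j)$ is only positive, decreasing and convex on $[0,\hat\theta_m]$, and for $\lambda$ above the smallest root it oscillates with magnitude that can be arbitrarily large (already for $m=1$, $|\hat Q_1(\hat\lambda_1)|$ blows up when the mass of $\hat r$ concentrates on small eigenvalues). Lemma~\ref{lemma:polynomials}(vi) is correspondingly stated only on $[0,\hat\theta_m]$. A proof that bounds all three error pieces by taking suprema over the whole spectrum therefore fails.

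The paper's proof supplies the missing idea: fix a truncation level $a\le|\hat Q_m'(0)|^{-1}\le\hat\theta_m$ and split the error with the spectral projections $\Pi_a$ and $\Pi_a^\perp$ of $\hat K$. On $\Pi_a$ the polynomial bounds are legitimate (there $|\hat Q_m|\le1$ and $\hat P_m(\lambda)\le|\hat Q_m'(0)|$ by convexity), producing your noise and bias terms. On $\Pi_a^\perp$ no polynomial bound is used at all: one writes $\|\Pi_a^\perp\hat K^s(\hat\beta_m-\beta)\|\le a^{s-1}\|\hat K(\hat\beta_m-\beta)\|\le a^{s-1}(\|\hat r-\hat K\hat\beta_m\|+\|\hat r-\hat K\beta\|)$ and then invokes a separate estimate (Lemma~\ref{lemma:residual}) for the fitted-moment norm $\|\hat r-\hat K\hat\beta_m\|$, which itself requires the orthogonality of the residual polynomials with respect to $[\cdot,\cdot]_1$ and the weight $\varphi_m(\lambda)=\hat Q_m(\lambda)\sqrt{\hat\theta_m/(\hat\theta_m-\lambda)}$ to reduce everything back to $[0,\hat\theta_m]$. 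This high-frequency/residual argument is what actually generates the $|\hat Q_m'(0)|^{2(1-s)}n^{-1}$ and part of the $|\hat Q_m'(0)|^{-2(\mu+s)}$ terms, and it is absent from your outline; without it the argument does not close.
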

Note that the last condition in Theorem~\ref{thm:pls_rate} imposes that the number of components $m$ does not increase too fast with the sample size and is not binding. In fact, it is optimal to have $|\hat Q_m'(0)|\sim n^{\frac{1}{2(\mu+1)}}$, in which case we obtain the following convergence rate
\begin{equation*}
	\left\|K^s(\hat{\beta}_{m} -\beta )\right\|^{2} = O_P\left(n^{-\frac{\mu+s}{\mu+1}}\right).
\end{equation*}
When $s=0$, this shows that the convergence rate of PLS in the Hilbert space norm is of order $n^{-\frac{\mu}{\mu+1}}$. On the other hand, when $s=1/2$, we obtain the convergence rate of the out-of-sample prediction error, since
\begin{equation*}
	\E_X\langle X,\hat\beta_{m} - \beta\rangle^2  = \left\|K^{1/2}(\hat\beta_{m} - \beta)\right\|^2,
\end{equation*}
where $\E_X$ is taken with respect  to $X$, independent of $(Y_i,X_i)_{i=1}^n$. 

\begin{remark}
	Note that the consistency of functional PLS has been previously established in \cite{delaigle2012methodology} assuming that the eigenvalues are summable only which can be stated as $\mu=0$. Characterizing the speed of convergence, however, requires more regularity with $\mu>0$.
\end{remark}
The following result shows that no estimator can achieve a faster than $n^{-\frac{\mu +s}{\mu +1}}\log^{-b} n$ rate on the class $\mathcal{S}(\mu,R,C)$.
\begin{theorem}\label{thm:lower_bound}
	For every $s\in[0,1/2]$, there exists $A<\infty$ such that
	\begin{equation*}
		\liminf_{n\to\infty} \inf_{\hat\beta}\sup_{(\beta,K)\in\mathcal{S}(\mu,R,C)}\mathrm{Pr}\left(\left\|K^s(\hat\beta - \beta) \right\| \geq An^{-\frac{\mu+s}{2(\mu+1)}}\log^{-b/2} n\right)>0,
	\end{equation*}
	where $b>2(\mu+s)$ and the infimum is over all estimators. 
\end{theorem}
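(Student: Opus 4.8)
The plan is to prove the bound by the classical reduction of estimation to multiple hypothesis testing. Since the supremum is over all $(\beta,K)\in\mathcal{S}(\mu,R,C)$, it suffices to exhibit, for each $n$, a finite family $\{(\beta^{(\omega)},K)\}_{\omega\in\Omega}$ whose members share \emph{one} covariance operator $K$, together with a data law, such that: (i) the data-generating law satisfies Assumption~\ref{as:data} and each $(\beta^{(\omega)},K)$ satisfies Assumptions~\ref{as:id}--\ref{as:complexity}; (ii) the functionals $K^s\beta^{(\omega)}$ are pairwise $2\rho_n$-separated in $\mathbb{H}$ with $\rho_n\asymp n^{-\frac{\mu+s}{2(\mu+1)}}\log^{-b/2}n$; and (iii) the $n$-sample laws under different $\omega$ are nearly indistinguishable, i.e.\ their pairwise Kullback--Leibler divergences are at most a fixed fraction, say $1/2$, of $\log|\Omega|$. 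Fano's inequality then yields $\inf_{\hat\beta}\max_{\omega\in\Omega}\mathrm{Pr}_\omega(\|K^s(\hat\beta-\beta^{(\omega)})\|\ge\rho_n)\ge c_0>0$ for all large $n$, which is the assertion.

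For the data law I would take $X$ to be a centered Gaussian element of $\mathbb{H}$ with covariance operator $K$ and $\varepsilon\mid X\sim N(0,\sigma^2)$ independent of $X$, so that Assumption~\ref{as:data} holds whenever $K$ is nuclear. The crucial point is that under this design the marginal law of $(X_i)_{i=1}^n$ does not depend on the slope, so the KL divergence between the sample laws at $\beta^{(\omega)}$ and $\beta^{(\omega')}$ equals $\frac{n}{2\sigma^2}\langle\beta^{(\omega)}-\beta^{(\omega')},K(\beta^{(\omega)}-\beta^{(\omega')})\rangle$, which renders the infinite dimensionality of $\mathbb{H}$ harmless. For the family I would fix an orthonormal system $(v_j)$, an effective dimension $m=m_n$, eigenvalues $\lambda_1=\dots=\lambda_m=\lambda$ with $m\lambda$ kept strictly below $C$, and a strictly positive, strictly decreasing, summable tail $(\lambda_j)_{j>m}$ with $\lambda_{m+1}<\lambda$ (so that $K$ has no zero eigenvalue and $\sum_j\lambda_j\le C$); then, using a Varshamov--Gilbert packing $\Omega\subset\{0,1\}^m$ with $|\Omega|\ge 2^{m/8}$ and minimal Hamming distance $\ge m/4$, I would set $\beta^{(\omega)}=\tau\sum_{j\le m}\omega_j v_j$ with $\tau^2=\kappa\min\!\big(R^2\lambda^{2\mu}/m,\ \sigma^2/(n\lambda)\big)$ for a small absolute constant $\kappa$. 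This forces $m\tau^2/\lambda^{2\mu}\le R^2$, so the source condition holds for every $\omega$, and $n\lambda\tau^2\lesssim\kappa\sigma^2$, so every pairwise KL divergence is $\le\kappa m/2\le\tfrac12\log|\Omega|$; at the same time $\|K^s(\beta^{(\omega)}-\beta^{(\omega')})\|^2=\lambda^{2s}\tau^2\,\#\{j:\omega_j\ne\omega_j'\}\gtrsim m\lambda^{2s}\tau^2$. Optimizing the trade-off over $(m,\lambda,\tau)$ subject to these constraints then gives $m_n\asymp n^{1/(2(\mu+1))}$, $\lambda\asymp n^{-1/(2(\mu+1))}$, and $m\lambda^{2s}\tau^2\asymp R^2\lambda^{2(\mu+s)}\asymp n^{-\frac{\mu+s}{\mu+1}}$, which is $\rho_n^2$ of the stated order; the logarithmic factor in the statement (with $b>2(\mu+s)$) is slack that frees one from optimizing constants --- equivalently, it is the price of the alternative route that reduces the problem to a Gaussian sequence model over a borderline, barely summable eigenvalue profile permitted inside $\mathcal{S}(\mu,R,C)$.

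The main obstacle, and the step I would treat with the most care, is the joint calibration of $(m_n,\lambda,\tau)$: the requirements pull against one another --- a rich, well-separated packing wants $m$ large; membership in $\mathcal{S}(\mu,R,C)$ caps $\tau^2$ by $R^2\lambda^{2\mu}/m$ and $m\lambda$ by $C$; near-indistinguishability caps $\tau^2$ by $\sigma^2/(n\lambda)$ --- and one must verify that at the optimal $m_n$ all of these hold \emph{simultaneously} with the right powers of $n$ and uniformly over $s\in[0,1/2]$, while the tail keeps $(\lambda_j)$ positive, non-increasing, and summing to at most $C$. Once the KL identity of the second paragraph is in hand, the remaining ingredients --- the Varshamov--Gilbert bound, Fano's inequality, and the passage from a testing lower bound to an in-probability estimation bound valid for \emph{all} estimators --- are routine, so I expect essentially all the work to lie in this calibration.
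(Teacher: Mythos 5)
Your proposal is correct and rests on the same machinery as the paper's proof --- reduction to a multiple testing problem over a Gaussian restricted submodel, the Varshamov--Gilbert packing of $\{0,1\}^m$, the identity $KL(P_\omega,P_{\omega'})=\frac{n}{2\sigma^2}\|K^{1/2}(\beta^{(\omega)}-\beta^{(\omega')})\|^2$, and a Fano-type bound (the paper invokes Tsybakov's Theorem~2.5 with divergences measured against the null $\theta^{(0)}=0$ rather than all pairs, but this is immaterial). The genuine difference is in the hard subfamily. The paper fixes a single $n$-independent covariance operator with the borderline-summable spectrum $\lambda_j=1/(j\log^a j)$, $a>1$, and places the hypercube perturbations on coordinates $m+1,\dots,2m$ with $m\asymp n^{1/(2(\mu+1))}$; the separation is then governed by $\lambda_{2m}^{2(\mu+s)}\asymp m^{-2(\mu+s)}\log^{-2a(\mu+s)}m$, which is precisely where the $\log^{-b/2}n$ loss in the statement comes from. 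You instead let $K$ depend on $n$ through a flat block of $m\asymp n^{1/(2(\mu+1))}$ repeated eigenvalues of size $\lambda\asymp n^{-1/(2(\mu+1))}$ (with $m\lambda$ bounded away from $C$ and a positive decreasing tail to keep $K$ injective and nuclear), which is permitted since the supremum in the theorem is taken over both $\beta$ and $K$ for each $n$. Your calibration checks out: the source-condition cap $R^2\lambda^{2\mu}/m$ and the indistinguishability cap $\sigma^2/(n\lambda)$ on $\tau^2$ are of the same order at this choice, and the resulting separation $m\lambda^{2s}\tau^2\asymp R^2\lambda^{2(\mu+s)}\asymp n^{-(\mu+s)/(\mu+1)}$ carries no logarithmic loss, so you in fact prove a strictly stronger statement that trivially implies the one with the $\log^{-b/2}n$ factor. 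What your construction buys is the clean minimax rate over $\mathcal{S}(\mu,R,C)$ (consistent with the paper's own footnote that the log factor is avoidable); what the paper's construction buys is a lower bound witnessed by a \emph{single fixed} operator $K$, i.e.\ hardness already within the fiber $\{(\beta,K):\beta\in\mathcal{S}\}$ for one covariance, at the price of the logarithmic correction. One small point to make explicit when writing this up: the probability in the theorem depends on the full data law, not just $(\beta,K)$, so you should state (as the paper does) that a lower bound for the Gaussian-design, Gaussian-noise submodel implies the general one.
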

Therefore, we conclude that the PLS estimator $\hat\beta_m$ achieves the (nearly) optimal convergence rate on $\mathcal{S}(\mu,R,C)$, simultaneously for the estimation $(s=0)$ and prediction $(s=1/2)$ errors.\footnote{It is possible to avoid the $1/\log n$ factor by considering the larger class of Hilbert--Schmidt operators.} 

\begin{remark}\label{remark:rkhs}
	$\beta$ in $\mathcal{S}(\mu,R,C)$ also belongs to the RKHS generated by the covariance with $\mu=1/2$. Our paper shows that the minimax-optimal rate in this case is $O_P(n^{-2/3})$. Our result does not seem to be directly comparable to \cite{cai2012minimax} in this case, since their minimax-optimal $O_P(n^{-2r/(2r+1)})$ is obtained under the additional assumption that the eigenvalues decline polynomially fast, i.e. $\lambda_j\sim j^{-r}$.\footnote{Note that similar assumptions are also made in \cite{gupta2023convergence} and \cite{lin2021kernel}.} Note also that $r=1$ cannot be satisfied since the existence of the covariance operator holds under $\E\|X\|^2<\infty$ which is equivalent to $\sum_{j=1}^\infty\lambda_j<\infty$. In contrast, our class $\mathcal{S}(\mu,R,C)$ with $\mu=1/2$ does not require specifying the decay rate of eigenvalues. 
\end{remark}

\subsection{Adaptive PLS estimator}
Next, we look at the adaptive PLS estimator, where the number of components is selected using the early stopping rule described in the following assumption.
\begin{assumption}\label{as:stopping}
	We select $\hat m$ such that
	\begin{equation*}
		\min\left\{m\geq 0:\; \left\|\hat r - \hat K\hat{\beta}_m \right\| \leq \tau\sigma\sqrt{\frac{2\E\|X\|^2}{\delta n}} \right\}.
	\end{equation*}
	for $\tau > 1$ and some $\delta\in(0,1)$.
\end{assumption}

Assumption~\ref{as:stopping} states that the selected number of PLS components $\hat m$ equals the first non-negative integer $m$ such that the norm of the fitted ``moment" is smaller than a certain threshold; see Supplementary Material, Section~\ref{suppl:simulations} for a practical implementation of this early stopping rule. Note that the number of selected components is finite since $\hat m\leq n_*$, where $n_*$ is the number of distinct non-zero eigenvalues of $\hat K$; see Proposition~\ref{prop:uniquness} in the Supplementary Material. In fact, the norm of ``residual" is zero for $m\geq n_*$ in which case we have perfect overfitting.

The following result shows that the early stopping rule in Assumption~\ref{as:stopping} is adaptive to the unknown degree of ill-posedness $\mu>0$.
\begin{theorem}\label{thm:pls_adaptation}
	Suppose that Assumptions~\ref{as:data}, \ref{as:id}, \ref{as:complexity}, and \ref{as:stopping} hold with $\delta\geq 1/n$. Then
	\begin{equation*}
		\left\|K^s(\hat{\beta}_{\hat m} -\beta )\right\|^{2} = O\left((\delta n)^{-\frac{\mu+s}{\mu+1}}\right)
	\end{equation*}
	with probability at least $1-\delta$ for every $s\in[0,1]$.
\end{theorem}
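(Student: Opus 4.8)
The plan is to translate the early stopping rule of Assumption~\ref{as:stopping} into a high-probability bound on the quantity $|\hat Q_{\hat m}'(0)|$ and then invoke Theorem~\ref{thm:pls_rate}. First I would control the stochastic noise: define the event
\begin{equation*}
	\mathcal{A} = \left\{\|\hat r - \hat K\beta\| \leq \sigma\sqrt{\tfrac{2\E\|X\|^2}{\delta n}}\right\}.
\end{equation*}
Since $\hat r - \hat K\beta = \frac{1}{n}\sum_{i=1}^n X_i(Y_i - \langle X_i,\beta\rangle) = \frac{1}{n}\sum_{i=1}^n X_i\varepsilon_i$ has mean zero and $\E\|\hat r - \hat K\beta\|^2 \leq \sigma^2\E\|X\|^2/n$ under Assumption~\ref{as:data}, Chebyshev's inequality gives $\mathrm{Pr}(\mathcal{A})\geq 1 - \delta/2$. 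On $\mathcal{A}$, the ``discrepancy principle'' structure of Assumption~\ref{as:stopping} with $\tau>1$ is a classical stopping rule from the inverse-problems literature (cf.\ \cite{hanke1995conjugate}, \cite{engl1996regularization}, \cite{blanchard2016convergence}): it certifies that the stopping index $\hat m$ is neither too small (the residual has genuinely been driven down to noise level) nor too large (we stop as soon as it is), so that $\hat m$ behaves like the oracle index. Concretely, I would use Lemma~\ref{lemma:polynomials} together with the source condition to show that on $\mathcal{A}$ one has $|\hat Q_{\hat m-1}'(0)| \lesssim (\delta n)^{\frac{1}{2(\mu+1)}}$ from the ``not-too-large'' side, and conversely $|\hat Q_{\hat m}'(0)| \gtrsim (\delta n)^{\frac{1}{2(\mu+1)}}$ is ruled out by the fact that the residual at $\hat m - 1$ still exceeds the threshold combined with the monotone decay of residuals along conjugate-gradient iterations; this pins $|\hat Q_{\hat m}'(0)| \asymp (\delta n)^{\frac{1}{2(\mu+1)}}$ up to constants, which in particular satisfies the side condition $|\hat Q_{\hat m}'(0)| = O_P(n^{1/2})$ needed for Theorem~\ref{thm:pls_rate} whenever $\delta\geq 1/n$.

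Given the matching two-sided bound on $|\hat Q_{\hat m}'(0)|$, I would then substitute $|\hat Q_{\hat m}'(0)| \asymp (\delta n)^{\frac{1}{2(\mu+1)}}$ into the three-term rate of Theorem~\ref{thm:pls_rate}. The first term becomes $(\delta n)^{\frac{1-s}{\mu+1}} n^{-1}$, the second $(\delta n)^{-\frac{\mu+s}{\mu+1}}$, and the third $(\delta n)^{-\frac{s}{\mu+1}} n^{-\mu\wedge 1}$; one checks term by term that each is $O\big((\delta n)^{-\frac{\mu+s}{\mu+1}}\big)$ using $\delta\leq 1$ and $\delta n\leq n$ (for the first term, $(\delta n)^{\frac{1-s}{\mu+1}-1}\cdot$ extra $\delta$-powers; for the third, $n^{-\mu\wedge 1}\leq (\delta n)^{-\mu\wedge 1}$ up to a $\delta$ factor and $\frac{\mu+s}{\mu+1}\leq \mu\wedge 1 + \frac{s}{\mu+1}$ needs a short case split on whether $\mu\leq 1$). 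Intersecting $\mathcal{A}$ with the $O_P(1)$-type events implicit in Theorem~\ref{thm:pls_rate} — which I would instead re-derive here as explicit high-probability bounds, again via Chebyshev on $\|\hat K - K\|_{\rm op}$ and $\|\hat r - \hat K\beta\|$, each failing with probability at most $\delta/4$ — yields the conclusion on an event of probability at least $1-\delta$.

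The main obstacle is the lower bound on $|\hat Q_{\hat m}'(0)|$, i.e.\ showing that the discrepancy principle does not stop \emph{prematurely} at an index where the regularization is still too strong. This is the delicate direction in conjugate-gradient analysis because, unlike linear regularization schemes, the CG residual polynomials are data-dependent and their roots $\hat\theta_j$ move with the sample; the argument must combine the definition of $\hat m$ (the residual at $\hat m-1$ exceeds $\tau\sigma\sqrt{2\E\|X\|^2/(\delta n)}$), the bias-type bound $\|\hat Q_{\hat m-1}(\hat K)\hat K\beta\| \lesssim |\hat Q_{\hat m-1}'(0)|^{-\mu}$ supplied by the source condition via Lemma~\ref{lemma:polynomials}, and the noise bound on $\mathcal{A}$, to conclude that $|\hat Q_{\hat m-1}'(0)|$ cannot be too small. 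A secondary technical point is that all estimates must be made uniform over the class $\mathcal{S}(\mu,R,C)$ with constants depending only on $(\mu,R,C,\sigma^2,\tau)$, so I would keep track of constants rather than using $O_P$ notation until the final step. Once the two-sided control of $|\hat Q_{\hat m}'(0)|$ is in hand, the remainder is the routine substitution described above.
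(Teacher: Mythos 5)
There is a genuine gap, and it sits exactly where you flag ``the main obstacle'': the lower bound on $|\hat Q_{\hat m}'(0)|$. Your plan requires the two-sided control $|\hat Q_{\hat m}'(0)|\asymp (\delta n)^{\frac{1}{2(\mu+1)}}$ so that the bias terms $|\hat Q_{\hat m}'(0)|^{-2(\mu+s)}$ and $|\hat Q_{\hat m}'(0)|^{-2s}n^{-\mu\wedge 1}$ in Theorem~\ref{thm:pls_rate} come out at the right order. But the lower-bound half is false in general over $\mathcal{S}(\mu,R,C)$: if, say, $\beta$ is supported on the leading eigenvector, the discrepancy principle legitimately stops after one or two steps with $|\hat Q_{\hat m}'(0)|=\sum_{j\leq \hat m}\hat\theta_j^{-1}=O(1)$, yet the estimation error is still small. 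Moreover, the mechanism you sketch for it does not deliver a lower bound: combining ``the residual at $\hat m-1$ exceeds the threshold'' with the \emph{upper} bound on the residual from the source condition (Lemma~\ref{lemma:residual}) forces one of the terms $|\hat Q_{\hat m-1}'(0)|^{-(\mu+1)}$ or $|\hat Q_{\hat m-1}'(0)|^{-1}(\delta n)^{-1/2}$ to be large, which yields an \emph{upper} bound on $|\hat Q_{\hat m-1}'(0)|$ — this is precisely Lemma~\ref{lemma:Q_prime}. A genuine lower bound would need a matching lower bound on the bias, which the source condition does not provide.

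The paper's proof avoids this entirely. It does not plug into the final statement of Theorem~\ref{thm:pls_rate} (which corresponds to the specific truncation choice $a=|\hat Q_m'(0)|^{-1}$); instead it re-enters the proof at the intermediate inequality (\ref{eq:bound_m}), valid for \emph{every} $a\leq|\hat Q_{\hat m}'(0)|^{-1}$, with the residual term replaced by the stopping threshold. It then splits on whether the optimal truncation level $a^*\sim(\delta n)^{-\frac{1}{2(\mu+1)}}$ satisfies $a^*\leq|\hat Q_{\hat m}'(0)|^{-1}$. If it does (the case where $|\hat Q_{\hat m}'(0)|$ may be small), one takes $a=a^*$, so the bias term becomes $(a^*)^{\mu+s}$ independently of $|\hat Q_{\hat m}'(0)|$, and the only remaining appearance of $|\hat Q_{\hat m}'(0)|$ is in the variance term, controlled by the one-sided Lemma~\ref{lemma:Q_prime}. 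If it does not, the needed lower bound $|\hat Q_{\hat m}'(0)|>1/a^*$ holds by the case hypothesis, for free. Your substitution arithmetic and the probability accounting are fine, but the proof cannot be completed along the route you describe without this case split (or some equivalent device) replacing the two-sided bound.
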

Taking $\delta_n=1/\log n$ in Assumption~\ref{as:stopping}, we obtain from Theorem~\ref{thm:pls_adaptation} the convergence rate of the estimation and prediction errors of PLS with the number of components is selected iteratively with the early stopping rule:
\begin{equation*}
	\left\|K^s(\hat{\beta}_{\hat m} -\beta )\right\|^{2} = O_P\left(\left(\frac{\log n}{n}\right)^{\frac{\mu+s}{\mu+1}}\right).
\end{equation*}
Therefore, the adaptive PLS achieves the (nearly) optimal convergence rate simultaneously for the estimation and prediction errors without knowing the degree of ill-posedness $\mu>0$.

\subsection{Number of Selected Components}
In this section, we look at how rapidly the number of selected components in Assumption~\ref{as:stopping} increases with the sample size. First, we consider a somewhat conservative bound that does not impose any assumptions on the spectrum of the operator $K$.

\begin{theorem}\label{thm:number_of_components}
	Suppose that Assumptions~\ref{as:data}, \ref{as:id}, \ref{as:complexity}, and \ref{as:stopping} are satisfied with $\delta\geq 1/n$ and $\mu\geq 1$. Then with probability at least $1-\delta$
	\begin{equation*}
		\hat m = O\left( (n\delta)^\frac{1}{4(\mu+1)}\right).
	\end{equation*}
\end{theorem}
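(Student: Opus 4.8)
The plan is to relate the stopping index $\hat m$ to the quantity $|\hat Q'_{\hat m}(0)|$, which is already pinned down by the analysis behind Theorem~\ref{thm:pls_adaptation}, and then close the gap with a conservative trace bound for $\hat K$. The bridge is elementary: writing $\hat\theta_1>\dots>\hat\theta_{\hat m}>0$ for the roots of the residual polynomial $\hat Q_{\hat m}$ and recalling that $|\hat Q'_{\hat m}(0)| = \sum_{j=1}^{\hat m}1/\hat\theta_j$, Cauchy--Schwarz gives
\begin{equation*}
  \hat m^2 \;=\; \Bigl(\sum_{j=1}^{\hat m}\sqrt{\hat\theta_j}\cdot\frac{1}{\sqrt{\hat\theta_j}}\Bigr)^2 \;\le\; \Bigl(\sum_{j=1}^{\hat m}\hat\theta_j\Bigr)\Bigl(\sum_{j=1}^{\hat m}\frac{1}{\hat\theta_j}\Bigr) \;=\; \Bigl(\sum_{j=1}^{\hat m}\hat\theta_j\Bigr)\,\bigl|\hat Q'_{\hat m}(0)\bigr|,
\end{equation*}
and we may assume $\hat m\ge 1$ since otherwise the claim is trivial.

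Next I would bound $\sum_{j=1}^{\hat m}\hat\theta_j$. The roots of the conjugate-gradient residual polynomial coincide with the Ritz values of $\hat K$ on the Krylov subspace $\mathbb{H}_{\hat m}$, i.e., the eigenvalues of the compression of $\hat K$ to $\mathbb{H}_{\hat m}$ (this should follow from Lemma~\ref{lemma:polynomials}). By Ky Fan's maximum principle (equivalently, Cauchy interlacing), $\sum_{j=1}^{\hat m}\hat\theta_j$ is no larger than the sum of the $\hat m$ largest eigenvalues of $\hat K$, hence at most $\mathrm{tr}(\hat K) = n^{-1}\sum_{i=1}^n\|X_i\|^2$. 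Since $\E\|X\|^4<\infty$ and $\mathrm{tr}(K)=\E\|X\|^2\le C$, Chebyshev's inequality gives $\mathrm{tr}(\hat K)\le \E\|X\|^2 + \sqrt{2\E\|X\|^4/(n\delta)} = O(1)$ on an event of probability at least $1-\delta/2$, using $\delta\ge 1/n$. This is the only place the ``spectrum-free'' nature of the bound enters; keeping $\sum_{j=1}^{\hat m}\hat\lambda_j$ instead would yield a sharper, spectrum-dependent statement.

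It then remains to control $|\hat Q'_{\hat m}(0)|$ from above, and here I would invoke (or re-run) the argument already needed for Theorem~\ref{thm:pls_adaptation}. By Assumption~\ref{as:stopping}, $\hat m$ is the first index at which $\|\hat r - \hat K\hat\beta_m\| = \|\hat Q_m(\hat K)\hat r\|$ falls below $\tau\sigma\sqrt{2\E\|X\|^2/(\delta n)}$, so $|\hat Q'_{\hat m}(0)|$ cannot exceed the value at which this threshold is first attained. Using the minimization property of conjugate gradients, the source condition $\beta\in\mathcal{S}(\mu,R,C)$, and the residual-polynomial estimates of Lemma~\ref{lemma:polynomials}, the residual splits into a bias term of order $|\hat Q'_m(0)|^{-(\mu+1)}$ and perturbation/noise terms governed by $\|\hat K-K\|$ and $\|n^{-1}\sum_i\varepsilon_i X_i\|$, both of order $n^{-1/2}$; the hypothesis $\mu\ge 1$ is what guarantees that the $n^{-(\mu\wedge 1)/2}$ contribution and the remaining perturbation terms stay below the threshold, so balancing the bias against the threshold gives $|\hat Q'_{\hat m}(0)| = O\bigl((n\delta)^{1/(2(\mu+1))}\bigr)$ on an event of probability at least $1-\delta/2$. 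Intersecting the two events (probability at least $1-\delta$) and plugging into the displayed inequality yields $\hat m^2 = O\bigl((n\delta)^{1/(2(\mu+1))}\bigr)$, i.e., $\hat m = O\bigl((n\delta)^{1/(4(\mu+1))}\bigr)$.

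The genuinely new step is the Cauchy--Schwarz/Ky Fan reduction, which is routine once one recognizes the residual-polynomial roots as Ritz values; the real obstacle is the upper bound on $|\hat Q'_{\hat m}(0)|$ --- showing the early stopping rule does not fire ``too late'' --- which needs two-sided control of the residual norm and a careful accounting of how the operator perturbation $\hat K-K$ and the noise $n^{-1}\sum_i\varepsilon_i X_i$ propagate through $\hat Q_m$, and is where $\E\|X\|^4<\infty$ and $\mu\ge 1$ are used. If, as one would expect, the proof of Theorem~\ref{thm:pls_adaptation} already isolates $|\hat Q'_{\hat m}(0)| = O_P\bigl((n\delta)^{1/(2(\mu+1))}\bigr)$ with the stated probability, then the present proof is essentially the two displays above.
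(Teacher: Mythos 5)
Your argument is correct in substance but takes a genuinely different route from the paper. The paper never touches the roots of $\hat Q_{\hat m}$: it constructs an explicit comparison polynomial (the shifted Jacobi polynomial $P_m^{(\mu+1)}$ normalized to equal $1$ at zero), uses the minimizing property of $\hat Q_m$ to get $\|\hat r-\hat K\hat\beta_m\|\leq\|P_m^{(\mu+1)}(\hat K)\hat r\|\lesssim \sigma\sqrt{2\E\|X\|^2/(\delta n)}+m^{-2(\mu+1)}$ (the exponent $-2(\mu+1)$ coming from the bound $|P_m^{(\nu)}(\lambda)|\leq c_\nu(1+m^2\lambda)^{-\nu}$, and $\mu\geq1$ absorbing the perturbation term), and then compares this to the threshold at step $\hat m-1$ to read off $\hat m\lesssim(\delta n)^{1/(4(\mu+1))}$ directly. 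You instead funnel everything through $|\hat Q_{\hat m}'(0)|\lesssim(\delta n)^{1/(2(\mu+1))}$ — which is exactly Lemma~\ref{lemma:Q_prime}, proved in the paper by the same two-sided residual control you sketch — and convert it into a bound on $\hat m$ via Cauchy--Schwarz on the roots plus the trace bound $\sum_j\hat\theta_j\leq\mathrm{tr}(\hat K)=O_P(1)$. Both yield the same exponent; your version has the advantage of reusing Lemma~\ref{lemma:Q_prime} wholesale (and, since that lemma does not require $\mu\geq1$, would in fact extend the theorem to all $\mu>0$), while the paper's Jacobi-polynomial device is self-contained and is reused in Theorem~\ref{thm:number_of_components2} to get the sharper spectrum-dependent bounds.

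Two points need tightening. First, the identification of the $\hat\theta_j$ as Ritz values does not ``follow from Lemma~\ref{lemma:polynomials}'' as stated; what the lemma gives is orthogonality of the $\hat Q_l$ with respect to $[\cdot,\cdot]_1$, and you must then invoke the standard fact that the roots of the $m$-th orthogonal polynomial are the eigenvalues of the associated Jacobi matrix, which under the isometry $\phi\mapsto\phi(\hat K)\hat K^{1/2}\hat r$ is the compression of $\hat K$ to the $m$-dimensional subspace $\hat K^{1/2}\mathbb{H}_m$; only then do Cauchy interlacing and $\mathrm{tr}(P\hat KP)\leq\mathrm{tr}(\hat K)$ for positive semidefinite $\hat K$ give $\sum_{j=1}^{\hat m}\hat\theta_j\leq\mathrm{tr}(\hat K)$. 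This step is essential: the cruder bound $\hat\theta_j\leq\hat\lambda_1$ only yields $\hat m\lesssim(\delta n)^{1/(2(\mu+1))}$. Second, intersecting your two events gives failure probability $3\delta/2$ rather than $\delta$; run Lemma~\ref{lemma:Q_prime} and the Chebyshev bound on $n^{-1}\sum_i\|X_i\|^2$ each at level $\delta/2$ (this only changes constants).
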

Taking $\delta = 1/\log n$, we obtain from Theorem~\ref{thm:number_of_components} that $\hat m = O_P\left((n/\log n)^\frac{1}{4(\mu+1)}\right)$. Next, we consider sharper estimates under additional assumptions imposed on the spectrum of the operator $K$.
\begin{theorem}\label{thm:number_of_components2}
	Suppose that Assumptions~\ref{as:data}, \ref{as:id}, \ref{as:complexity}, and \ref{as:stopping} are satisfied with $\delta\geq e/n$ and $\mu\geq 1$. Then with probability at least $1-\delta$
	\begin{itemize}
		\item[(i)] If $\lambda_{j} = O(j^{-2\kappa})$ for some $\kappa >0$, then
		\begin{equation*}
			\hat m = O\left((n\delta)^\frac{1}{4(\kappa+1)(\mu+1)}\right).
		\end{equation*}
		\item[(ii)] If $\lambda_{j} = O(q^{j})$ for some $q\in(0,1)$, then 
		\begin{equation*}
			\hat m = O\left(\log(n\delta)\right).
		\end{equation*}
	\end{itemize}
\end{theorem}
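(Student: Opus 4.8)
The plan is to refine the argument behind Theorem~\ref{thm:number_of_components} by exploiting the decay of the spectrum of $K$. The starting point is that, since $\hat\beta_m$ minimizes $\|\hat r-\hat Kb\|^2$ over the Krylov subspace $\mathbb{H}_m=\mathrm{span}\{\hat r,\dots,\hat K^{m-1}\hat r\}$, writing $b=p(\hat K)\hat r$ with $\deg p\le m-1$ shows that its ``residual'' equals the minimal residual polynomial applied to $\hat r$:
\begin{equation*}
	\left\|\hat r-\hat K\hat\beta_m\right\|=\min_{\deg Q\le m,\;Q(0)=1}\left\|Q(\hat K)\hat r\right\|.
\end{equation*}
Hence, to prove $\hat m\le m_0$ it suffices to exhibit a single polynomial $Q^*$ with $\deg Q^*\le m_0$ and $Q^*(0)=1$ for which $\|Q^*(\hat K)\hat r\|$ lies below the stopping threshold $\tau\sigma\sqrt{2\E\|X\|^2/(\delta n)}$. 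Decompose $\hat r=\hat K\beta+\hat g$ with $\hat g:=n^{-1}\sum_{i=1}^nX_i\varepsilon_i$; Markov's inequality with $\E\|\hat g\|^2\le\sigma^2\E\|X\|^2/n$ gives $\|\hat g\|\le\rho:=\sigma\sqrt{2\E\|X\|^2/(\delta n)}$ with probability at least $1-\delta/2$, and $\E\|\hat K-K\|_{\rm op}^2\le\E\|X\|^4/n$ gives $\|\hat K-K\|_{\rm op}\le c(\delta n)^{-1/2}$ with probability at least $1-\delta/2$. I would work on the intersection of these events.

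On that event the plan is to take $Q^*=\hat Q_1\hat Q_2$, a product tuned by a free threshold $\ell\in(0,1)$. The first factor $\hat Q_1(\lambda)=\prod_{k:\hat\lambda_k\ge\ell}(1-\lambda/\hat\lambda_k)$, with one factor per \emph{distinct} such eigenvalue, vanishes on $\{\hat\lambda_k\ge\ell\}$ and satisfies $|\hat Q_1(\hat\lambda_k)|\le1$ on $\{\hat\lambda_k<\ell\}$ because all its roots exceed $\ell$. Weyl's inequality gives $\hat\lambda_j\le\lambda_j+\|\hat K-K\|_{\rm op}$, and since the $\ell$ we will select dominates the perturbation $(\delta n)^{-1/2}$, the number of factors of $\hat Q_1$ is $O(\ell^{-1/(2\kappa)})$ under $\lambda_j=O(j^{-2\kappa})$ and $O(\log(1/\ell))$ under $\lambda_j=O(q^{j})$. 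The second factor $\hat Q_2$ is a Chebyshev-type ``qualification'' polynomial of degree $m_2$ with $\hat Q_2(0)=1$, $|\hat Q_2|\le1$ on $[0,\ell]$, and $\sup_{\lambda\in[0,\ell]}\lambda^{\mu+1}|\hat Q_2(\lambda)|\le C_\mu\,\ell^{\mu+1}m_2^{-2(\mu+1)}$; existence of such a polynomial is the standard polynomial-approximation input from the inverse-problems literature (cf.\ \cite{engl1996regularization}, \cite{hanke1995conjugate} and Lemma~\ref{lemma:polynomials}). Note that taking $\hat Q_1\equiv1$ and working on all of $[0,\hat\lambda_1]$ recovers the bound of Theorem~\ref{thm:number_of_components}.

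The remaining step is to bound $\|Q^*(\hat K)\hat r\|\le\|Q^*(\hat K)\hat K\beta\|+\|Q^*(\hat K)\hat g\|$, evaluating everything at the eigenvalues of $\hat K$. The noise term is at most $\|\hat g\|\le\rho$, since $Q^*(\hat\lambda_k)$ is zero for $\hat\lambda_k\ge\ell$ and bounded by one otherwise. For the bias, the source condition $\beta\in\mathcal{S}(\mu,R,C)$ gives $\beta=K^\mu w$ with $\|w\|\le R$; using $\mu\ge1$ one has $\|K^\mu-\hat K^\mu\|_{\rm op}\lesssim\|K-\hat K\|_{\rm op}$, so $\hat K\beta=\hat K^{\mu+1}w$ up to an operator error of order $\ell\,n^{-1/2}R=o(\rho)$ (since $\|Q^*(\hat K)\hat K\|_{\rm op}\le\ell$ on the relevant eigenvalues), and then $\|Q^*(\hat K)\hat K^{\mu+1}w\|\le R\max_k\hat\lambda_k^{\mu+1}|Q^*(\hat\lambda_k)|\le C_\mu R\,\ell^{\mu+1}m_2^{-2(\mu+1)}$. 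Choosing $m_2\asymp\ell^{1/2}(\delta n)^{1/(4(\mu+1))}$ makes the bias at most $(\tau-1)\rho$, hence $\hat m\le\deg\hat Q_1+m_2$. I would then optimize over $\ell$: balancing $\ell^{-1/(2\kappa)}$ against $\ell^{1/2}(\delta n)^{1/(4(\mu+1))}$ yields $\ell\asymp(\delta n)^{-\kappa/(2(\kappa+1)(\mu+1))}$ and the bound in (i), whereas $\ell\asymp(\delta n)^{-1/(2(\mu+1))}$ makes the second contribution $O(1)$ and leaves $\deg\hat Q_1=O(\log(\delta n))$, which is (ii).

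I expect the main obstacle to be the simultaneous control of $Q^*$: constructing the qualification factor $\hat Q_2$ with the sharp exponent $-2(\mu+1)$, multiplying it by the root-annihilating factor $\hat Q_1$ while still keeping $|Q^*|\le1$ and $\lambda^{\mu+1}|Q^*(\lambda)|$ small \emph{at the eigenvalues of $\hat K$}, and propagating the $K\to\hat K$ perturbation through both factors and through the source condition. Once the degree of $\hat Q_1$ is pinned down via Weyl's inequality and the assumed decay of $\lambda_j$, the optimization over $\ell$ is routine.
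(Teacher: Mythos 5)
Your proposal is correct and follows essentially the same route as the paper: the competitor polynomial $Q^*=\hat Q_1\hat Q_2$ (linear factors annihilating the leading empirical eigenvalues times a shifted Jacobi-type qualification polynomial with the $m^{-2(\mu+1)}$ bound from \cite{engl1996regularization}) is exactly the paper's $G_m$, and the decomposition of $\hat r$, the use of the source condition with the $\hat K^\mu-K^\mu$ perturbation bound, and Weyl's inequality to transfer the decay of $\lambda_j$ to $\hat\lambda_j$ all coincide. The only difference is bookkeeping: you parameterize by an eigenvalue threshold $\ell$ and optimize it directly, whereas the paper parameterizes by the index $k$ of annihilated eigenvalues and solves a self-referential inequality in $\hat m$; both yield the stated rates.
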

Theorem~\ref{thm:number_of_components2} shows that if the eigenvalues decline polynomially fast, then the selected number of components is $\hat m = O_P(n/\log n)^{\frac{1}{4(\kappa +1)(\mu+1)}}$ while in the case of the geometric decline, the number of selected components increases slowly with the sample size. Therefore, the adaptive early stopping rule will choose a smaller number of components if the eigenvalues of the operator $K$ decline faster and vice versa.

\section{Inference}\label{sec:inference}
\subsection{A Hypothesis Test}
We aim to test the null hypothesis against the fixed alternative hypothesis:
\begin{equation*}
	H_0:\; \beta = b \qquad \text{vs.} \qquad H_1:\; \beta \ne b,
\end{equation*}
for some known $b \in \mathbb{H}$. We are also interested in testing against a sequence of local alternative hypotheses
\begin{equation*}
	H_{1,n}:\; \beta = b + n^{-1/2}\Delta
\end{equation*}
for some $\Delta\in\mathbb{H}$.

Recall that our main result in Theorem~\ref{thm:pls_rate} shows that
\begin{equation*}
	\left\|K^s(\hat\beta_m - \beta)\right\|^2 = O_P\left(n^{-\frac{\mu+s}{\mu+1}}\right),\qquad \forall s\in[0,1],
\end{equation*}
provided  that the number of components $m$ is properly selected. The convergence rate is $O_P(n^{-1})$ for $s=1$ and it is slower than $O_P(n^{-1})$ for $s<1$. This suggests that the following statistic
\begin{equation*}
	T_n = n \left\| \hat{K}(\hat{\beta}_m - b) \right\|^2
\end{equation*}
may have a well-defined asymptotic distribution under $H_0$.\footnote{The statistic is similar to Wald's statistics which under homoskedasticity roughly corresponds to $s=-1/2$ and is not expected to converge at the parametric rate.}

Let $V = \mathbb{E}[\varepsilon^2 X \otimes X]$ be the variance operator with spectral decomposition
\begin{equation*}
	V = \sum_{j=1}^\infty\omega_j\varphi_j\otimes\varphi_j
\end{equation*}
and let $Z_j \overset{\text{i.i.d.}}{\sim} {N}(0,1)$. The following theorem establishes the limiting distribution of $T_n$ under the null and alternative hypotheses.
\begin{theorem}\label{thm:test}
	Suppose that Assumption~\ref{as:data} and \ref{as:id} are satisfied and the PLS estimator is computed using $m$ components such that $\|\hat{r} - \hat{K} \hat{\beta}_m\| = o_P(n^{-1/2})$. Then, under $H_0$,
	\begin{equation*}
		T_n \xrightarrow{d} \sum_{j=1}^\infty \omega_j Z_j^2,
	\end{equation*}
	while under $H_1$, $T_n\xrightarrow{\rm a.s.}\infty$. Moreover, under $H_{1,n}$,
	\begin{equation*}
		T_n \xrightarrow{d} \sum_{j=1}^\infty\omega_jZ_j^2 + 2\sum_{j=1}^\infty\omega_j^{1/2}Z_j\langle \varphi_j,K\Delta\rangle + \|K\Delta\|^2.
	\end{equation*}
\end{theorem}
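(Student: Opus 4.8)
The plan is to write $T_n = n\|\hat K(\hat\beta_m - b)\|^2$ as a quadratic form in an asymptotically Gaussian $\mathbb{H}$-valued object and then invoke the continuous mapping theorem. The starting point is the algebraic decomposition
\begin{equation*}
	\hat K(\hat\beta_m - b) = (\hat r - \hat K b) - (\hat r - \hat K \hat\beta_m).
\end{equation*}
Under $H_0$ we have $r = Kb$, so $\hat r - \hat K b = (\hat r - r) - (\hat K - K)b = \frac{1}{n}\sum_{i=1}^n(Y_iX_i - (X_i\otimes X_i)b) = \frac{1}{n}\sum_{i=1}^n \varepsilon_i X_i$, which is a normalized i.i.d.\ sum of mean-zero $\mathbb{H}$-valued random elements with covariance operator $V = \E[\varepsilon^2 X\otimes X]$ (finite in trace norm by Assumption~\ref{as:data}, since $\E[\varepsilon^2\|X\|^2]\le \sigma^2\E\|X\|^2<\infty$). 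The Hilbert-space CLT then gives $\sqrt n(\hat r - \hat K b)\xrightarrow{d} G$, where $G\sim N(0,V)$ in $\mathbb{H}$; concretely $G = \sum_j \omega_j^{1/2} Z_j \varphi_j$. The second term is controlled by the hypothesis of the theorem: $\|\hat r - \hat K\hat\beta_m\| = o_P(n^{-1/2})$, so $\sqrt n(\hat r - \hat K\hat\beta_m) = o_P(1)$ and hence $\sqrt n\,\hat K(\hat\beta_m - b) \xrightarrow{d} G$. Applying the continuous map $x\mapsto\|x\|^2$ yields $T_n \xrightarrow{d}\|G\|^2 = \sum_{j=1}^\infty \omega_j Z_j^2$ under $H_0$. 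The representation of $\|G\|^2$ as this weighted chi-square series follows from expanding $G$ in the eigenbasis $(\varphi_j)$ of $V$.

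For the fixed alternative $H_1:\beta = b$ is false, write $\hat K(\hat\beta_m - b) = \hat K(\hat\beta_m - \beta) + \hat K(\beta - b)$. The first term is $(\hat r - \hat K\hat\beta_m) - (\hat r - \hat K\beta)$; by the theorem's hypothesis $\hat r - \hat K\hat\beta_m = o_P(n^{-1/2})$, and $\hat r - \hat K\beta = \frac1n\sum\varepsilon_iX_i + (\hat K - K)(\beta-\beta) = \frac1n\sum \varepsilon_i X_i \to 0$ a.s.\ by the SLLN in $\mathbb{H}$ (using $\E\|\varepsilon X\|<\infty$), so the first term vanishes a.s. Meanwhile $\hat K(\beta - b)\to K(\beta-b)$ a.s.\ (again by the SLLN, since $\hat K\to K$ in operator norm a.s.\ under $\E\|X\|^4<\infty$, or by $\|(\hat K-K)(\beta-b)\|\le\|\hat K - K\|_{\rm op}\|\beta-b\|$), and $K(\beta-b)\ne 0$ by Assumption~\ref{as:id} (no zero eigenvalues) since $\beta - b\ne 0$. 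Hence $\|\hat K(\hat\beta_m-b)\|$ converges a.s.\ to a strictly positive constant, and $T_n = n\cdot(\text{positive}+o(1))\xrightarrow{\rm a.s.}\infty$.

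For the local alternative $H_{1,n}:\beta = b + n^{-1/2}\Delta$, repeat the $H_0$-style decomposition but now $r = K\beta = Kb + n^{-1/2}K\Delta$, so
\begin{equation*}
	\sqrt n(\hat r - \hat K b) = \sqrt n(\hat r - r) + \sqrt n(r - \hat K b) = \frac{1}{\sqrt n}\sum_{i=1}^n\varepsilon_i X_i + \sqrt n\cdot n^{-1/2}K\Delta + \sqrt n(K - \hat K)(b - \beta).
\end{equation*}
The last term is $\sqrt n\,(K-\hat K)(-n^{-1/2}\Delta) = (\hat K - K)\Delta = o_P(1)$ since $\|\hat K - K\|_{\rm op} = o_P(1)$. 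Combined with the hypothesis $\sqrt n(\hat r - \hat K\hat\beta_m) = o_P(1)$, this gives $\sqrt n\,\hat K(\hat\beta_m - b)\xrightarrow{d} G + K\Delta$ where $G\sim N(0,V)$. The continuous mapping theorem then yields $T_n\xrightarrow{d}\|G + K\Delta\|^2$, and expanding in the $(\varphi_j)$ basis, with $\langle G,\varphi_j\rangle = \omega_j^{1/2}Z_j$, gives
\begin{equation*}
	\|G+K\Delta\|^2 = \|G\|^2 + 2\langle G, K\Delta\rangle + \|K\Delta\|^2 = \sum_{j=1}^\infty\omega_jZ_j^2 + 2\sum_{j=1}^\infty\omega_j^{1/2}Z_j\langle\varphi_j,K\Delta\rangle + \|K\Delta\|^2,
\end{equation*}
as claimed. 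The main obstacle — and the step requiring the most care — is justifying the Hilbert-space CLT and continuous mapping rigorously: one must verify that $\frac1{\sqrt n}\sum\varepsilon_iX_i$ satisfies the CLT in $\mathbb{H}$ (a Lindeberg-type or finite-second-moment condition on the $\mathbb{H}$-valued summands, which follows from $\E[\varepsilon^2\|X\|^2]<\infty$ and the trace-class property of $V$), and that the weak limits of the cross term $\langle G, K\Delta\rangle$ and the series $\sum_j\omega_j Z_j^2$ are well-defined (the latter converges a.s.\ because $\sum_j\omega_j = \operatorname{tr}(V)<\infty$). The decompositions themselves are elementary once the key identity $\hat r - \hat K b = \frac1n\sum\varepsilon_i X_i$ under $H_0$ is in hand.
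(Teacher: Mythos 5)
Your proposal is correct and follows essentially the same route as the paper's proof: the same decomposition $\hat K(\hat\beta_m-b)=(\hat r-\hat Kb)-(\hat r-\hat K\hat\beta_m)$, the Hilbert-space CLT for $\tfrac{1}{\sqrt n}\sum_i\varepsilon_iX_i$, the $o_P(n^{-1/2})$ hypothesis to kill the fitting residual, and the continuous mapping theorem with the Karhunen--Lo\`eve expansion of $G$; the only cosmetic difference is that the paper expands $T_n$ into three terms and bounds the cross term by Cauchy--Schwarz, whereas you apply Slutsky directly to the $\mathbb{H}$-valued quantity before squaring. Both are valid and equivalent in substance.
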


\begin{remark}
	The requirement $\|\hat r - \hat K\hat\beta_m\| = o_P(n^{-1/2})$ in Theorem~\ref{thm:test} can be easily satisfied when the number of components $m$ is sufficiently large. Indeed, we know that $\|\hat r - \hat K\hat\beta_m\| =0$ for $m= n_*$, where $n_*\leq n$ is the number of non-zero eigenvalues of the sample covariance operator $\hat K$, cf. Supplementary Material, Proposition~\ref{prop:uniquness}.
\end{remark}

\begin{remark}
	Theorem~\ref{thm:test} illustrates an interesting phenomenon related to the optimal choice of the number of PLS components which amounts to early stopping of the conjugate gradient descent. While it is optimal to select a small number of components to prevent overfitting for estimation, this should be avoided for inference. In fact, it is evident from the proof of Theorem~\ref{thm:test} that the overfitted PLS with zero fitting error leads to more accurate asymptotic approximation; cf. \cite{bartlett2020benign} for benign overfitting in the linear regression.
\end{remark}

\begin{remark}
	The test has more power against the alternatives with larger values of
	\begin{equation*}
		\|K\Delta\|^2 = \sum_{j=1}^\infty\lambda_j^2\langle\Delta,v_j\rangle^2,
	\end{equation*}
	where $(\lambda_j,v_j)_{j=1}^\infty$ are the eigenvalues and eigenvectors of the operator $K$. Such alternatives are aligned with directions defined by the leading eigenvectors of $K$, i.e. directions in which it is easier to identify the slope function $\beta$.
\end{remark}

\begin{remark}
	Under homoskedasticity, $\E[\varepsilon^2|X]=\sigma^2$,  we have $V = \sigma^2K$. In this case, $V$ has eigenvectors $\varphi_j=v_j$ with corresponding eigenvalues $\omega_j = \sigma^2\lambda_j$.
\end{remark}

Theorem~\ref{thm:test} implies that the test based on $T_n$ has correct size under the null hypothesis  and is consistent under the fixed alternative hypotheses which we state as a trivial corollary below. To that end, let $z_{1-\alpha}$ be the $1-\alpha$ quantile of
\begin{equation*}
	T=\sum_{j=1}^\infty\omega_jZ_j^2.
\end{equation*}
The test rejects $H_0$ if $T_n>z_{1-\alpha}$. Then, the following result holds:
\begin{corollary}\label{cor:size_power}
	Suppose that the conditions of Theorem~\ref{thm:test} are satisfied. Then under $H_0$, we have
	\begin{equation*}
		\lim_{n\to\infty}\Pr(T_n > z_{1-\alpha}) = \alpha
	\end{equation*}
	while under $H_1$, we have
	\begin{equation*}
		\lim_{n\to\infty}\Pr(T_n > z_{1-\alpha}) = 1.
	\end{equation*}
	Moreover, under $H_{1,n}$
	\begin{equation*}
		\lim_{n\to\infty}\Pr(T_n > z_{1-\alpha}) \uparrow 1\qquad \mathrm{as}\qquad \|K\Delta\|\uparrow\infty.
	\end{equation*}
\end{corollary}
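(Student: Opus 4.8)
The plan is to read off all three statements from the weak-convergence conclusions of Theorem~\ref{thm:test}; the only preliminary fact needed is that $z_{1-\alpha}$ is a continuity point of each limiting law involved. I would establish this once and for all. Provided $\omega_1=\|V\|_{\rm op}>0$ (excluding the degenerate case $V=0$), the null limit $T=\sum_{j\ge1}\omega_jZ_j^2$ can be written $T=\omega_1Z_1^2+R$ with $R\ge0$ independent of $Z_1$; since $\omega_1Z_1^2$ has a density, the convolution $F_T$ is continuous on $\mathbb{R}$ and strictly increasing on $[0,\infty)$ with $F_T(0)=0$, so $z_{1-\alpha}$ is well defined and $F_T$ is continuous there. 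The same argument applied to the local-alternative limit $T_\Delta$ — whose $Z_1$-dependent part is a non-constant smooth, hence atomless, transform of a Gaussian convolved with an independent nonnegative remainder — shows $F_{T_\Delta}$ is continuous at $z_{1-\alpha}$ as well.

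Under $H_0$, Theorem~\ref{thm:test} gives $T_n\xrightarrow{d}T$, so by the Portmanteau theorem (using continuity of $F_T$ at $z_{1-\alpha}$) we get $\Pr(T_n>z_{1-\alpha})\to\Pr(T>z_{1-\alpha})=1-F_T(z_{1-\alpha})=\alpha$. Under $H_1$, Theorem~\ref{thm:test} gives $T_n\xrightarrow{\rm a.s.}\infty$, whence $\mathbf{1}\{T_n>z_{1-\alpha}\}\to1$ a.s.\ and, by bounded convergence, $\Pr(T_n>z_{1-\alpha})\to1$ (convergence of $T_n$ to $+\infty$ in probability would already suffice).

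For the local alternative $H_{1,n}$, write the limit from Theorem~\ref{thm:test} as $T_\Delta=W+2S+\|K\Delta\|^2$ with $W=\sum_j\omega_jZ_j^2\ge0$ and $S=\sum_j\omega_j^{1/2}Z_j\langle\varphi_j,K\Delta\rangle$ a centered Gaussian; Bessel's inequality gives $\mathrm{Var}(S)=\sum_j\omega_j\langle\varphi_j,K\Delta\rangle^2\le\omega_1\|K\Delta\|^2$. Continuity of $F_{T_\Delta}$ at $z_{1-\alpha}$ gives $\lim_n\Pr(T_n>z_{1-\alpha})=\Pr(T_\Delta>z_{1-\alpha})$, and since $W\ge0$, for $\|K\Delta\|^2\ge2z_{1-\alpha}$ Chebyshev's inequality yields
\begin{equation*}
\Pr(T_\Delta\le z_{1-\alpha})\le\Pr\!\big(|S|\ge\tfrac12(\|K\Delta\|^2-z_{1-\alpha})\big)\le\frac{16\,\mathrm{Var}(S)}{\|K\Delta\|^4}\le\frac{16\,\omega_1}{\|K\Delta\|^2}.
\end{equation*}
This estimate is uniform over all $\Delta$ with a prescribed value of $\|K\Delta\|$, so $\lim_n\Pr(T_n>z_{1-\alpha})\ge1-16\omega_1/\|K\Delta\|^2\to1$ as $\|K\Delta\|\uparrow\infty$, which is the asserted statement.

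There is no substantive obstacle here: everything follows from Theorem~\ref{thm:test} together with textbook facts about weak convergence and weighted (noncentral) chi-square laws. The two points that do require attention are (i) verifying that $z_{1-\alpha}$ is a continuity point of $F_T$ and of $F_{T_\Delta}$, which is what licenses passing from $T_n\xrightarrow{d}\cdot$ to convergence of the rejection probability, and (ii) arranging the tail bound under $H_{1,n}$ so that the limiting power is controlled uniformly in the ``direction'' of $\Delta$, leaving a dependence only on $\|K\Delta\|$.
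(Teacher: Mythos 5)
Your proof is correct and, on the part that actually requires an argument, takes a different and in fact sounder route than the paper. The paper dismisses the first two claims as trivial consequences of Theorem~\ref{thm:test}; your Portmanteau/bounded-convergence treatment is exactly that step, except that you additionally verify that $z_{1-\alpha}$ is a continuity point of the limit laws (via the atomless $\omega_1 Z_1^2$ convolution factor), a detail the paper omits but which is what licenses passing from convergence in distribution to convergence of rejection probabilities. For the local-alternative claim the paper invokes Markov's inequality and writes
\[
\lim_{n\to\infty}\Pr(T_n > z_{1-\alpha}) = z_{1-\alpha}^{-1}\sum_{j=1}^\infty\omega_j + z_{1-\alpha}^{-1}\|K\Delta\|^2,
\]
which cannot be read literally: the right-hand side exceeds one for large $\|K\Delta\|$, and Markov's inequality bounds the upper tail from \emph{above}, which is the wrong direction for showing that power tends to one. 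Your argument --- lower-bounding $\Pr(T_\Delta>z_{1-\alpha})$ by controlling the lower tail of the Gaussian cross term $S$ via Chebyshev, with $\mathrm{Var}(S)=\sum_j\omega_j\langle\varphi_j,K\Delta\rangle^2\le\omega_1\|K\Delta\|^2$ --- supplies the needed one-sided bound cleanly and uniformly over directions of $\Delta$, and is what a corrected version of the paper's proof would have to do. Two minor caveats: you need $\omega_1>0$ (which you flag as excluding the degenerate case $V=0$), and, as with the paper's own statement, what is actually established is $\lim_n\Pr(T_n>z_{1-\alpha})\to 1$ as $\|K\Delta\|\to\infty$, with monotonicity holding only for your lower bound rather than for the limiting power itself.
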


It is also easy to show that the critical value $z_{1-\alpha}$ can be consistently estimated with simple nonparametric bootstrap based on the i.i.d. draws from the sample $(Y_i,X_i)_{i=1}^n$. Alternatively, one can simulate the asymptotic critical values using the eigenvalues $\hat\omega_j$ of the sample variance operator $\hat V$.

\subsection{A Confidence Set}
Inverting the test, we obtain a confidence set
\begin{equation*}
	C_{1-\alpha} = \left\{b\in\mathbb{H}:\; T_n(b)\leq z_{1-\alpha}\right\},
\end{equation*}
where $z_{1-\alpha}$ is the quantile of order $1-\alpha$ of $T=\sum_{j=1}^\infty\omega_jZ_j^2$. Indeed, since $T_n\xrightarrow{d}T$, the confidence set has the coverage probability $1-\alpha$:
\begin{equation*}
	\lim_{n\to\infty}\Pr(\beta \in C_{1-\alpha}) = \lim_{n\to\infty}\Pr(T_n(\beta)\leq z_{1-\alpha}) = \Pr\left(T \leq z_{1-\alpha}\right) = 1-\alpha.
\end{equation*}

Since $\beta$ is an infinite-dimensional object, computing this confidence set numerically requires finite-dimensional approximations which can be done as follows. Let $(h_j)_{j=1}^\infty$ be a basis of $\mathbb{H}$. Then we approximate $z_{1-\alpha}$ numerically by computing
\begin{equation*}
	C_{1-\alpha}^J = \left\{(b_1,\dots,b_J)\in\mathbb{R}^J:\; T_n\left(\sum_{j=1}^Jb_jh_j\right)\leq z_{1-\alpha}\right\}.
\end{equation*}
For instance, in the functional data analysis, we often have $\mathbb{H}=L_2[0,1]$ in which case, we can use the Fourier basis or Legendre polynomials. If $\mathbb{H}=L_2(\mathbb{R})$, the Hermite polynomials are a natural choice; see Supplementary Material, Section~\ref{suppl:simulations} for numerical implementation and simulation results.

\section{Monte Carlo Experiments}\label{sec:mc}
In this section, we conduct several Monte Carlo experiments to evaluate the finite sample performance of the PLS estimator. We simulate the i.i.d. samples $(Y_i,X_i)_{i=1}^n$ from the functional linear model
\begin{equation*}
	Y_i = \int_0^1X_i(s)\beta(s)ds + \varepsilon_i, \qquad \varepsilon_i\sim_{\rm i.i.d.} N(0,1),
\end{equation*}
where the predictors $X_i$ belong to the Hilbert space of square-integrable functions with respect to the Lebesgue measure, denoted $\mathbb{H}=L^{2}[0,1]$. The functional predictor is generated as
\begin{equation*}
	X_i(s) = \sum^{100}_{j=1}\sqrt{\lambda_{j}}u_{j}v_{j}(s),\qquad u_j\sim_{\rm i.i.d.} N(0,1).
\end{equation*}

We specify the slope parameter $\beta\in L_2[0,1]$ and the spectrum $(\lambda_j,v_j)_{j\geq 1}$ correspond to one of the following three models:
\begin{itemize}
	\item  \textbf{Model 1:} $\beta(s)  = \sum^{100}_{j=1}\beta_{j}v_{j}(s) $ with $\beta_{j} = 4j^{-2.7}$ and $\lambda_{j} = 2j^{-1.1}$ for all $j\geq 1$, $v_{j}(s) = \sqrt{2}\cos(j\pi s),$ $j\geq 1,2,3,\dots$, and we redefine $v_1(s)=1$.
	\item \textbf{Model 2:} same as Model 1, but with $\beta_j=4$ for all $j=1,\dots,5$.
	\item \textbf{Model 3:} same as Model 1, but with $\lambda_j=2$ for all $j=1,\dots,5$.
\end{itemize}
All three models satisfy Assumption~\ref{as:complexity} with the same complexity parameter. Model 2 emphasizes the importance of the first five coefficients of $\beta_j$, making the estimation and prediction tasks more challenging. Model 3 introduces repeated eigenvalues, causing the first five eigenvectors to be non-identifiable, although the slope parameter remains identifiable.

We compute the PLS estimator using Algorithm~\ref{algor1}, which is numerically equivalent to the estimator given by equation (\ref{eq:pls_problem}); see \cite{hanke1995conjugate}, Algorithm 2.1 and Proposition 2.1. This approach avoids direct inversion of the empirical operator $\hat K$ by utilizing iterative multiplication, making it suitable for solving high-dimensional linear systems of the form $\hat K\hat\beta = \hat r$ with a symmetric matrix $\hat K$.

\begin{algorithm}[H]\label{algor1}
	\SetAlgoLined
	\KwResult{$\hat{\beta}_{m}$ }
	\textbf{Initialisation:}
	$\hat{\beta}_{0} = 0$,  $d_0=e_0=\hat r - \hat K\hat\beta_0$\;
	\For{$j = 0,1,\dots, m-1$}{
		1. Compute step size: $\alpha_j = \frac{\langle e_j,\hat K e_j\rangle}{\|\hat Kd_j\|^2}$\;
		2. Update slope coefficient: $\hat\beta_{j+1} = \hat\beta_j + \alpha_jd_j$\;
		3. Update fitted moment: $e_{j+1}  = e_j - \alpha_j\hat K d_j$\;
		4. Compute step size for the conjugate direction update: $\gamma_{j+1} = \frac{\langle e_{j+1},\hat Ke_{j+1}\rangle }{\langle e_j,\hat Ke_j\rangle}$\;
		5. Update conjugate direction vector: $d_{j+1} = e_{j+1} + \gamma_{j+1}d_j$\;
	}
	\caption{PLS algorithm for solving $\hat K\hat\beta = \hat r$.}
\end{algorithm}
The integrals in inner products and the operator $K$ are discretized using a simple Riemann sum approximation over a grid of $T=200$ equidistant points in $[0,1]$.\footnote{This provides a satisfactory approximation under weak assumption. Alternatively, one could employ quadrature rules with fewer points to reduce computational load.} The experiments feature $5,000$ replications, with each replication generating samples of size $n=100$. For each simulation experiment and an estimator $\hat\beta$, we compute:
\begin{itemize}
	\item The integrated squared error (ISE):
	\begin{equation*}
		\mathrm{ISE}(\hat\beta) = \int_0^1|\hat\beta(s) - \beta(s)|^2\mathrm{d}s;
	\end{equation*}
	\item the mean-squared prediction error (MSPE):
	\begin{equation*}
		\text{MSPE}(\hat\beta)  = \frac{1}{n}\sum_{i=1}^n(Y_i - \langle X_i,\hat\beta\rangle)^2,
	\end{equation*}
	where the estimator $\hat\beta$ is computed from an auxiliary independent sample of size $n$.
\end{itemize}
We compare the performance of our functional PLS estimator with the number of components chosen using our early stopping rule, against several alternative methods:\footnote{See Supplementary Material, Section~\ref{suppl:simulations}, for more details on the practical implementation.}
\begin{enumerate}
	\item A spline estimator, jointly selecting the spline degree and smoothing parameter via generalized cross-validation (GCV), following  \cite{crambes2009smoothing}.
	\item A principal component regression (PCA) estimator, where component selection is also done by GCV.
	\item A reproducing kernel Hilbert space (RKHS) estimator using the same kernel as in \cite{yuan2010reproducing}, equation (16).
	\item The alternative PLS method from \cite{delaigle2012methodology}, selecting components through 5-fold cross-validation.
\end{enumerate}
Note that \cite{crambes2009smoothing}, Proposition 2, demonstrates that the prediction error for the smoothing spline estimator is asymptotically equivalent to the infeasible optimal selection of tuning parameters. On the other hand, the kernel function employed in \cite{yuan2010reproducing} is not adaptively selected from the data.  We are not aware of any adaptivity results for other competing estimators, including PCA and the alternative PLS estimator with cross-validation. In contrast, our method for selecting the number of PLS components is adaptive for both estimation and prediction according to Theorem~\ref{thm:pls_adaptation}.

Figure~\ref{fig:estimation_prediction} summarizes the distributions of estimation (ISE) and prediction (MSPE) errors from $5,000$ simulations. The early stopped PLS estimator achieves the lowest median estimation error, with consistently lower variability across all models. The penalized spline estimator performs effectively when coefficients decline rapidly but exhibits notably larger errors in Model 2, which involves high-frequency slope coefficients. Nevertheless, it shows excellent predictive performance consistent with \cite{crambes2009smoothing}. The PLS estimator also demonstrates competitive predictive accuracy, although it does not consistently yield the lowest median prediction. The RKHS estimator achieves good prediction performance but inferior estimation across all models. The alternative PLS estimator generally performs comparably to our proposed PLS approach but exhibits notably larger prediction errors in Models 2 and 3. These higher errors may be attributed to numerical instability associated with finite-precision arithmetic, as discussed in \cite{delaigle2012methodology}. Finally, PCA demonstrates relatively modest performance across all the considered models. Moreover, we find in simulations that the FPLS estimator outperforms FPCA with a smaller number of components; see \cite{frank1993}. 

\begin{figure}[htbp]
	\centering
	
	\begin{subfigure}[b]{0.48\textwidth}
		\centering
		\includegraphics[width=\textwidth]{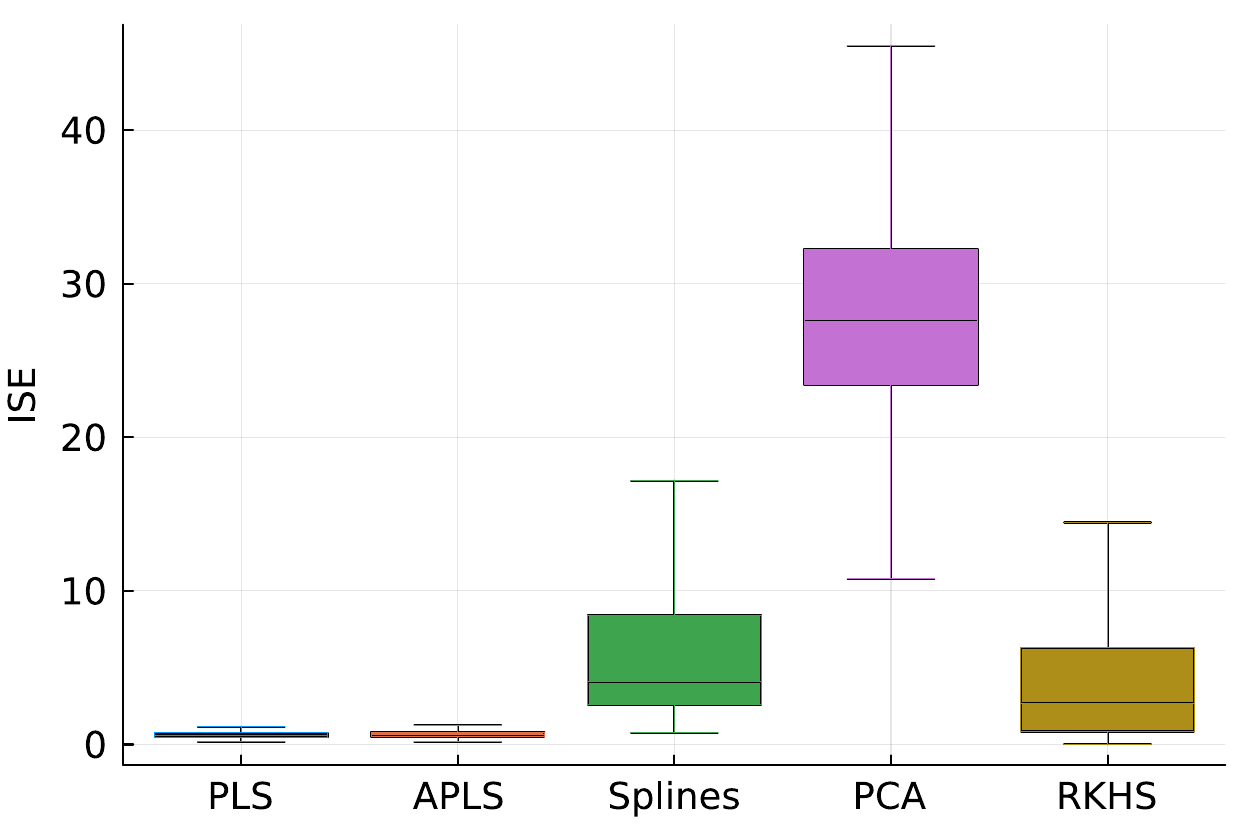}
		\caption{Model 1: Estimation Error (ISE)}
		\label{fig:model1_ise}
	\end{subfigure}
	\hfill
	\begin{subfigure}[b]{0.48\textwidth}
		\centering
		\includegraphics[width=\textwidth]{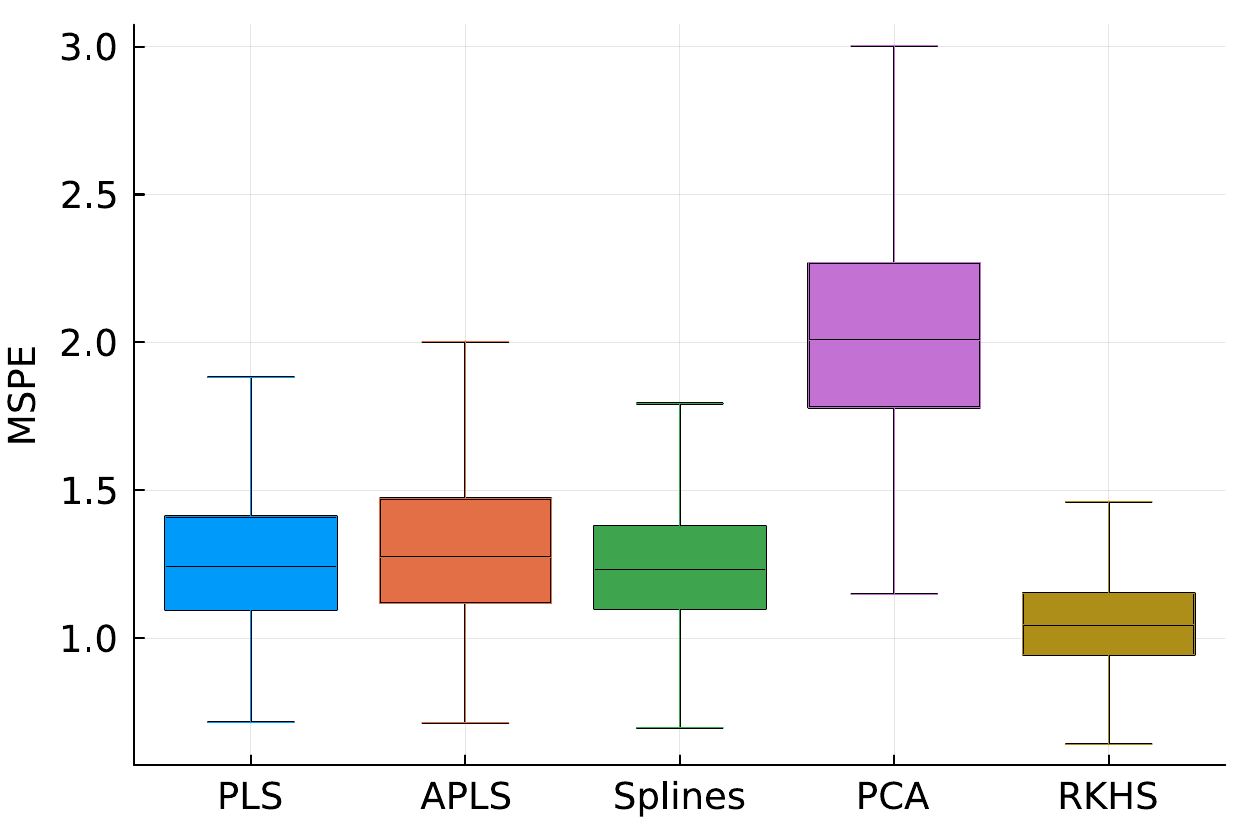}
		\caption{Model 1: Prediction Error (MSPE)}
		\label{fig:model1_spe}
	\end{subfigure}
	
	\begin{subfigure}[b]{0.48\textwidth}
		\centering
		\includegraphics[width=\textwidth]{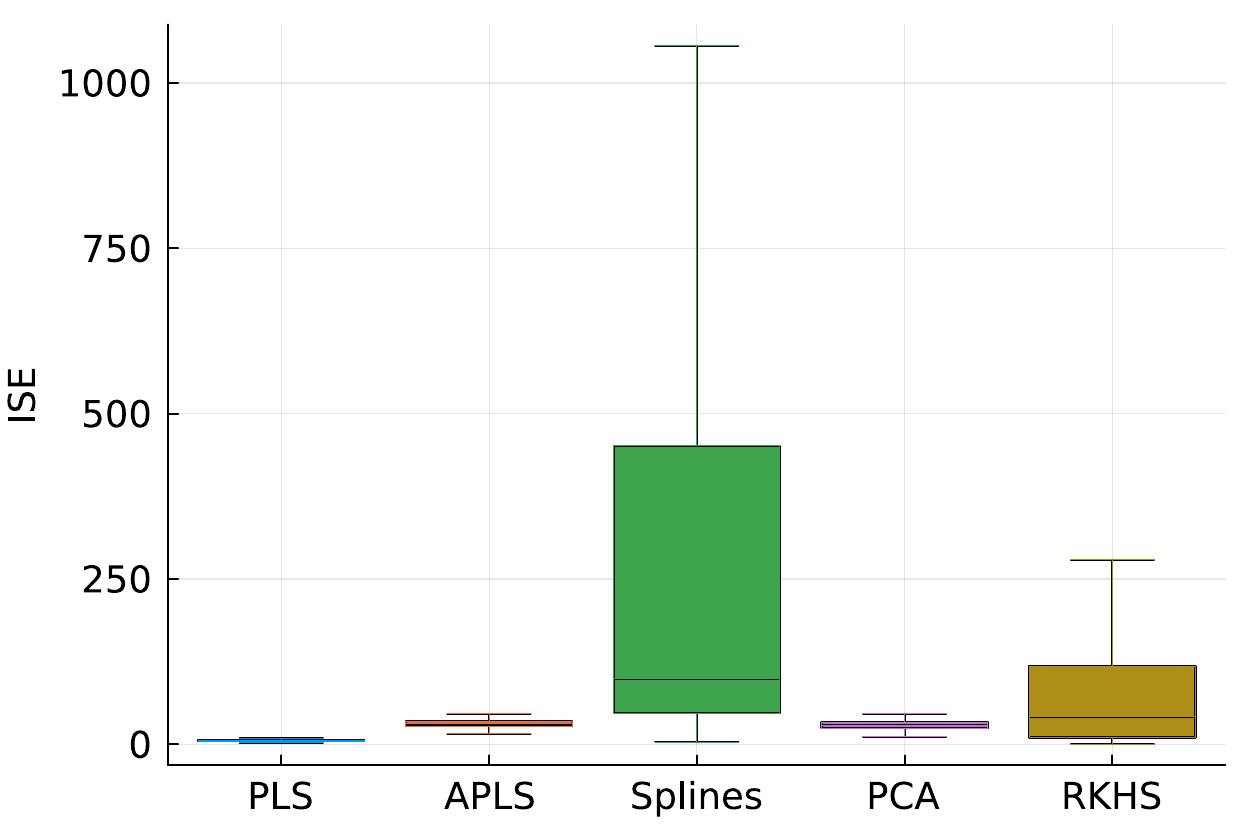}
		\caption{Model 2: Estimation Error (ISE)}
		\label{fig:model2_ise}
	\end{subfigure}
	\hfill
	\begin{subfigure}[b]{0.48\textwidth}
		\centering
		\includegraphics[width=\textwidth]{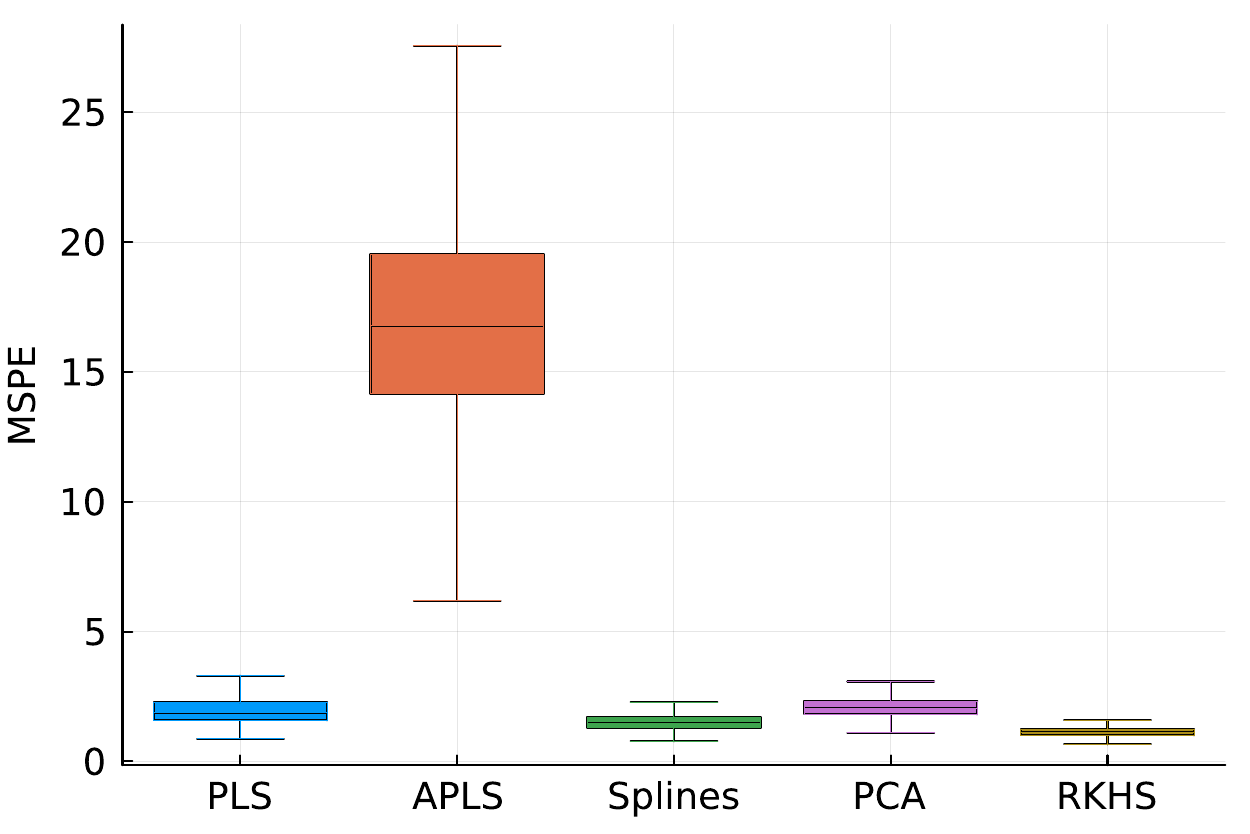}
		\caption{Model 2: Prediction Error (MSPE)}
		\label{fig:model2_spe}
	\end{subfigure}
	
	\begin{subfigure}[b]{0.48\textwidth}
		\centering
		\includegraphics[width=\textwidth]{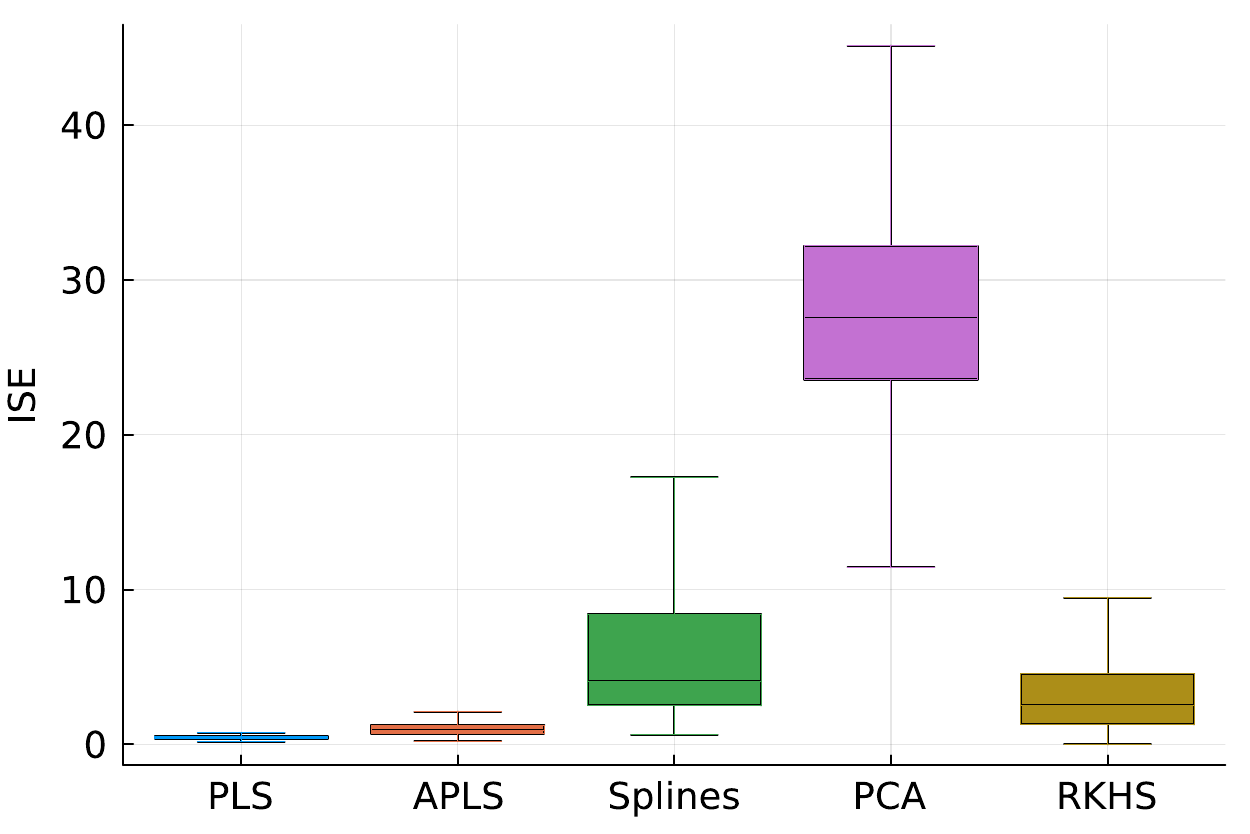}
		\caption{Model 3: Estimation Error (ISE)}
		\label{fig:model3_ise}
	\end{subfigure}
	\hfill
	\begin{subfigure}[b]{0.48\textwidth}
		\centering
		\includegraphics[width=\textwidth]{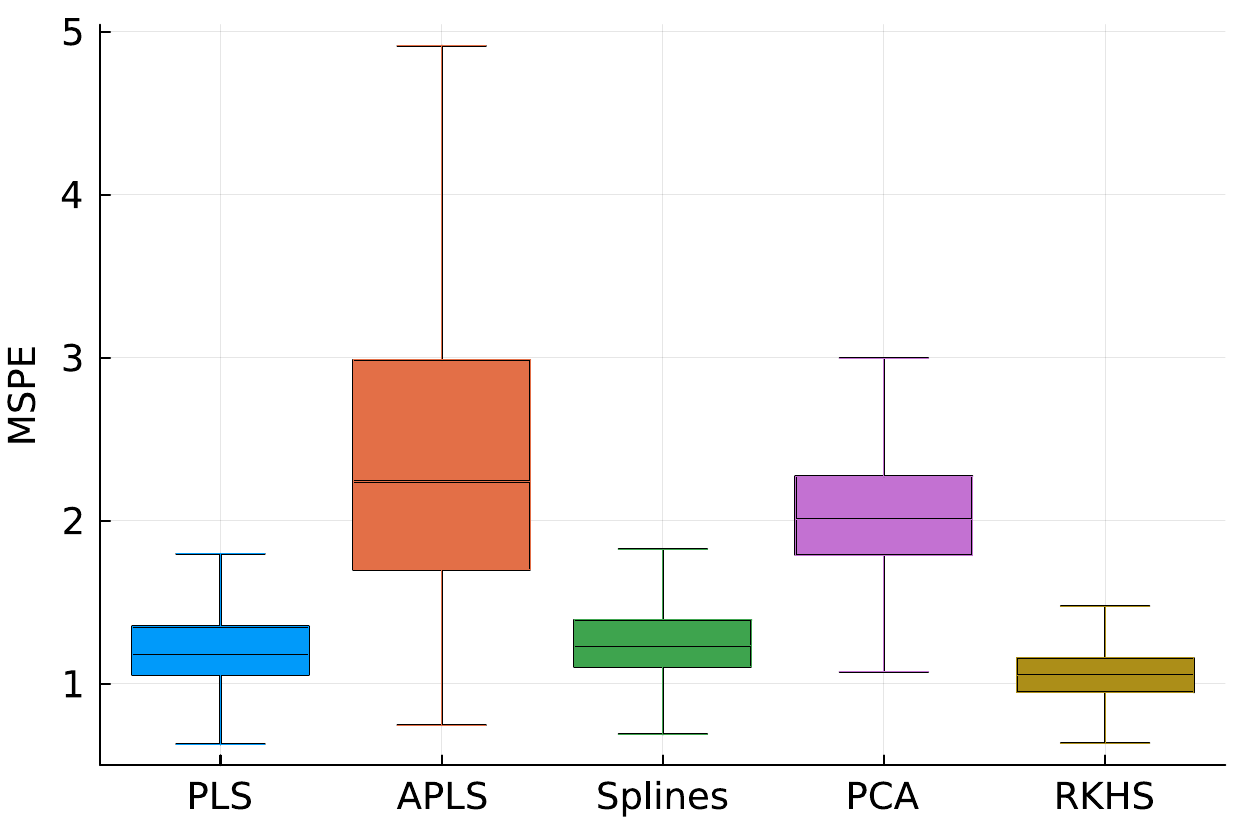}
		\caption{Model 3: Prediction Error (MSPE)}
		\label{fig:model3_spe}
	\end{subfigure}
	
	\caption{Distribution of estimation errors (left columns) and prediction errors (right columns) for Models 1–3. Each boxplot summarizes the distribution of errors across 5,000 Monte Carlo simulations.}
	\label{fig:estimation_prediction}
\end{figure}

Next, we evaluate the performance of the proposed test. We first assess whether the asymptotic distribution provides a good approximation to the finite-sample distribution under the null hypothesis $H_0$.  Given that the PLS fitting error does not substantially decrease after approximately 10 components, we employ PLS with $m=70$ components to ensure that conditions of Theorem~\ref{thm:test} are satisfied.\footnote{We find that for these models the fitting error does not substantially decrease after $m=10$ components.} The results are presented in Figure~\ref{fig:distribution_and_qq}.

For each of the three models, the left columns in Figure~\ref{fig:distribution_and_qq} display the finite-sample distribution of $T_n$ (in blue) under $H_0$, overlaid with the simulated $\sum_{j=1}^{100} \omega_j Z_j^2$ (in orange) obtained from Theorem~\ref{thm:test}. The right columns present the corresponding QQ-plots, comparing the empirical quantiles of $T_n$ to the asymptotic quantiles of $T$. Overall, the simulated asymptotic distribution closely matches the finite-sample distribution, with only minor discrepancies observed in the extreme right tail.

\begin{figure}[htbp]
	\centering
	\begin{subfigure}[b]{0.45\textwidth}
		\includegraphics[width=\textwidth]{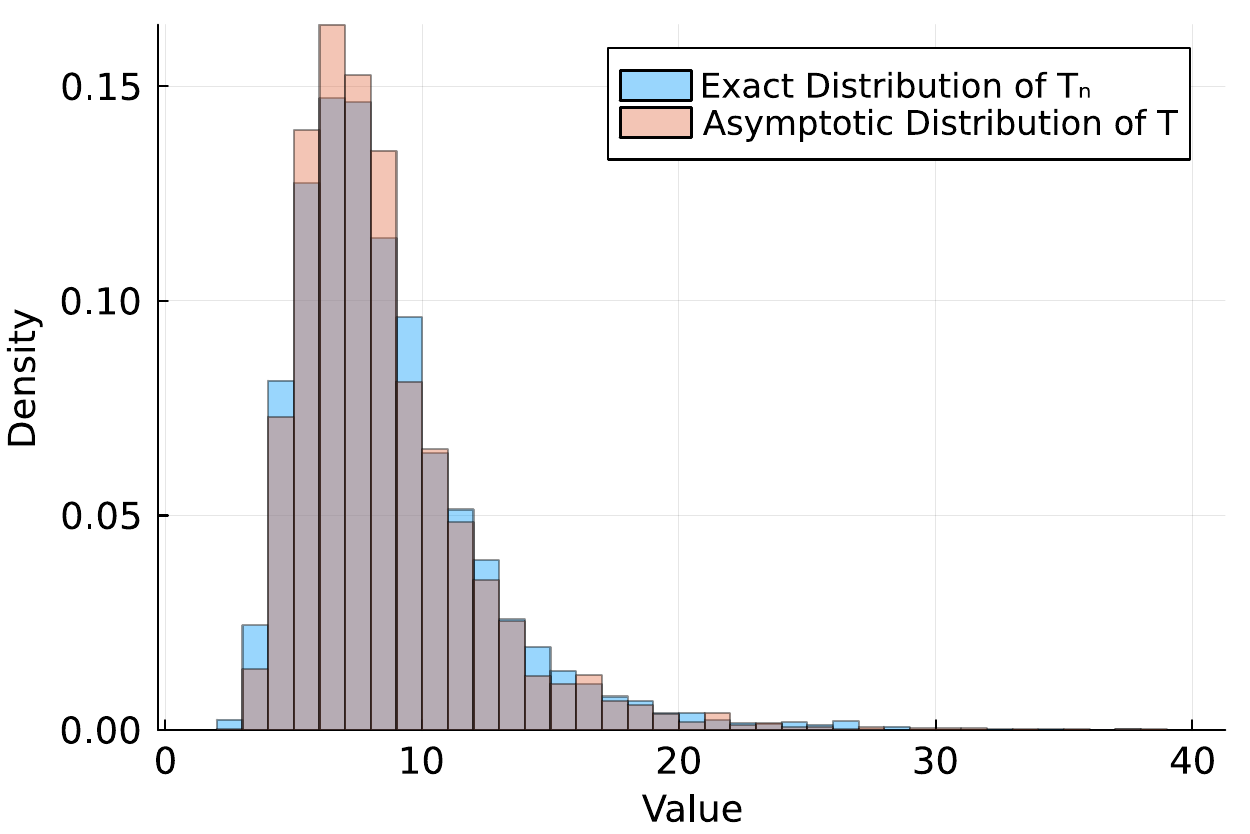}
		\caption{Model 1 }
	\end{subfigure}\hfill
	\begin{subfigure}[b]{0.45\textwidth}
		\includegraphics[width=\textwidth]{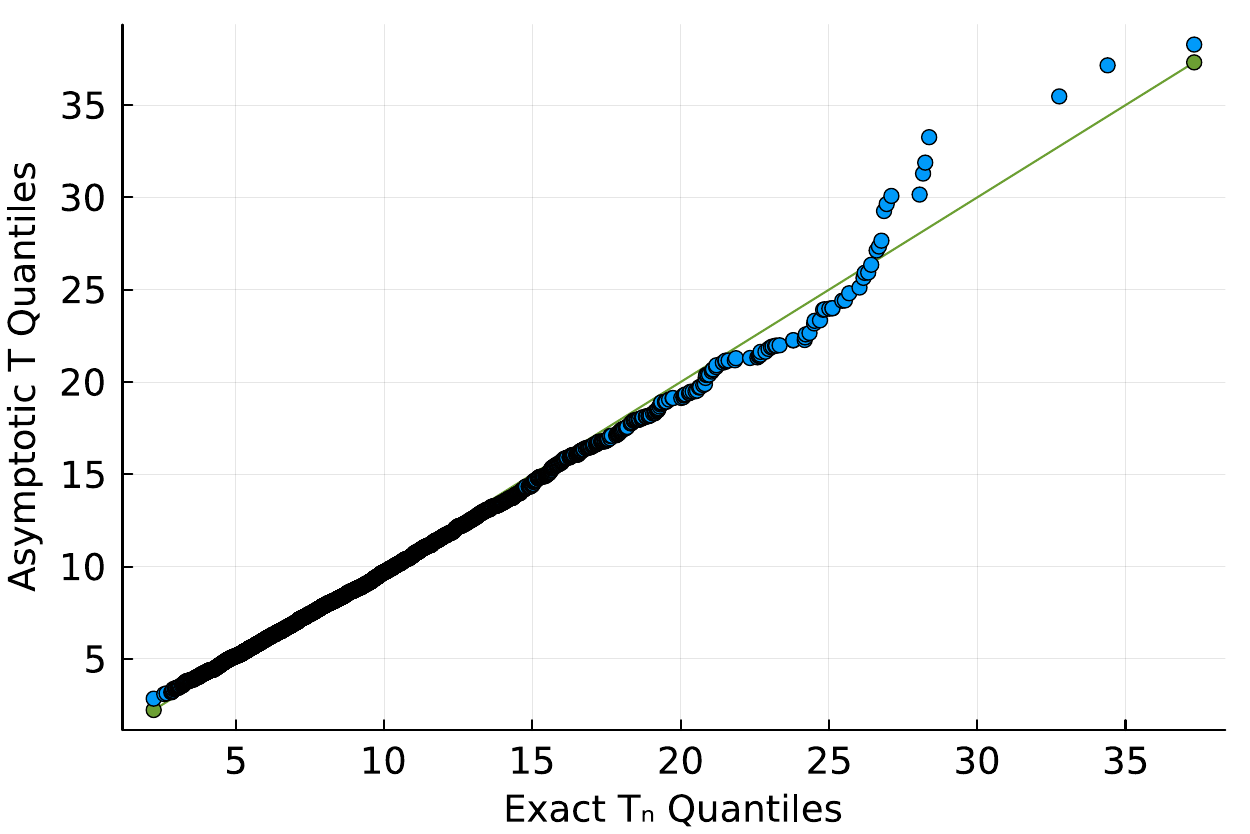}
		\caption{Model 1 QQ‐plot}
	\end{subfigure}
		\begin{subfigure}[b]{0.45\textwidth}
		\includegraphics[width=\textwidth]{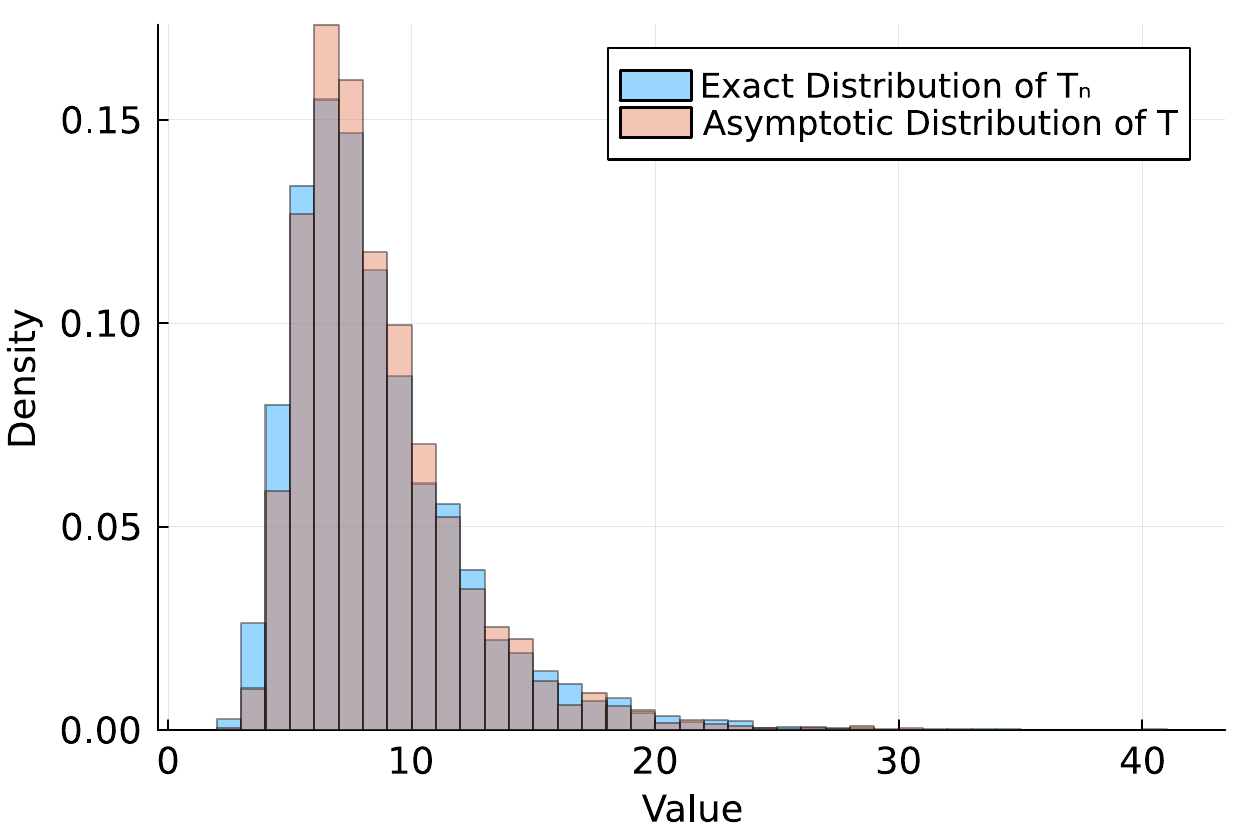}
		\caption{Model 2}
	\end{subfigure}\hfill
	\begin{subfigure}[b]{0.45\textwidth}
		\includegraphics[width=\textwidth]{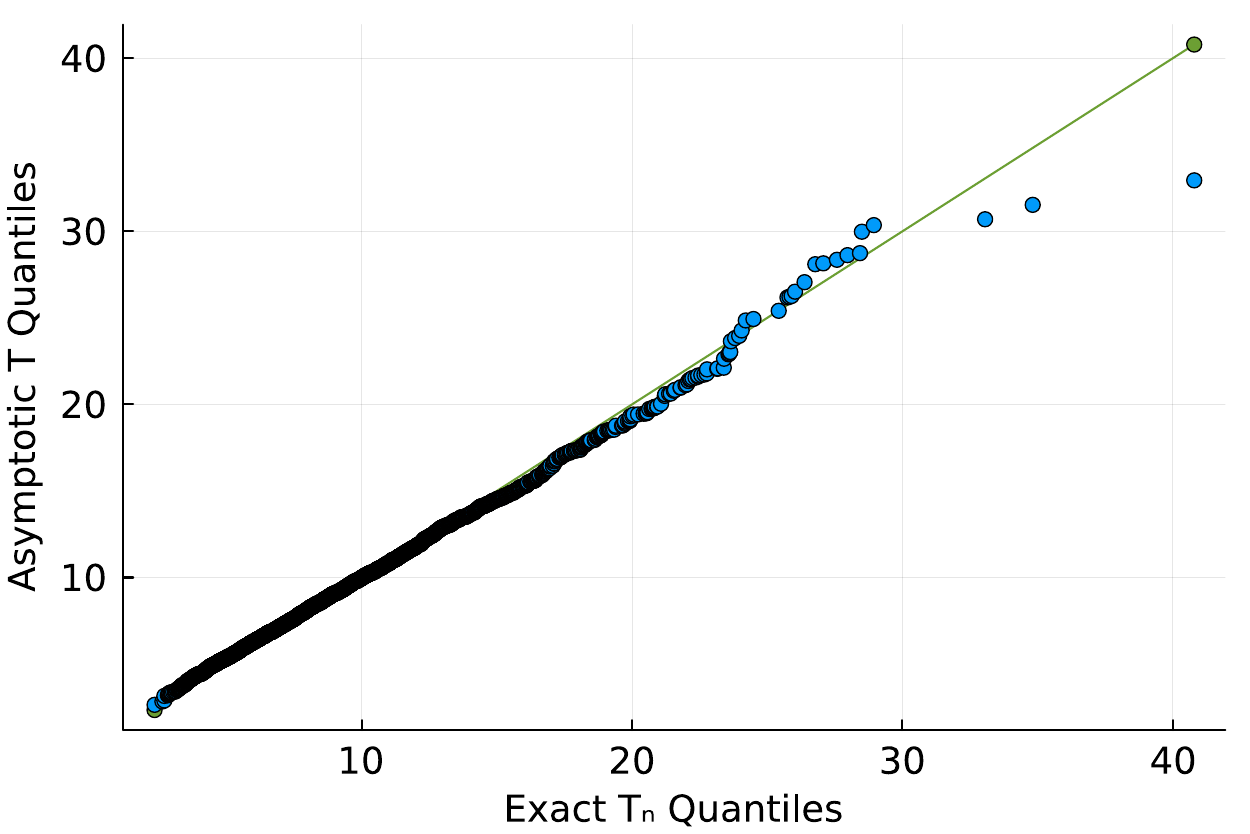}
		\caption{Model 2 QQ‐plot}
	\end{subfigure}
	\begin{subfigure}[b]{0.45\textwidth}
		\includegraphics[width=\textwidth]{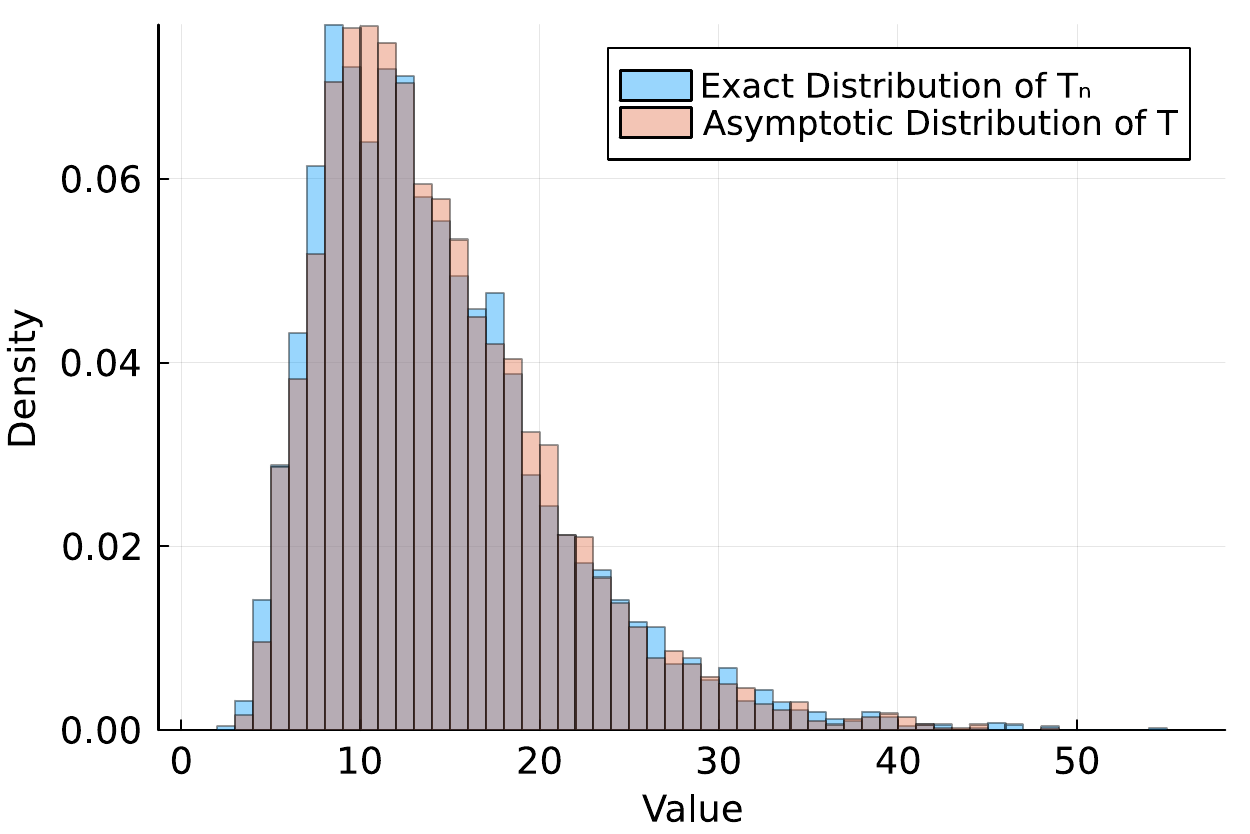}
		\caption{Model 3}
	\end{subfigure}\hfill
	\begin{subfigure}[b]{0.45\textwidth}
		\includegraphics[width=\textwidth]{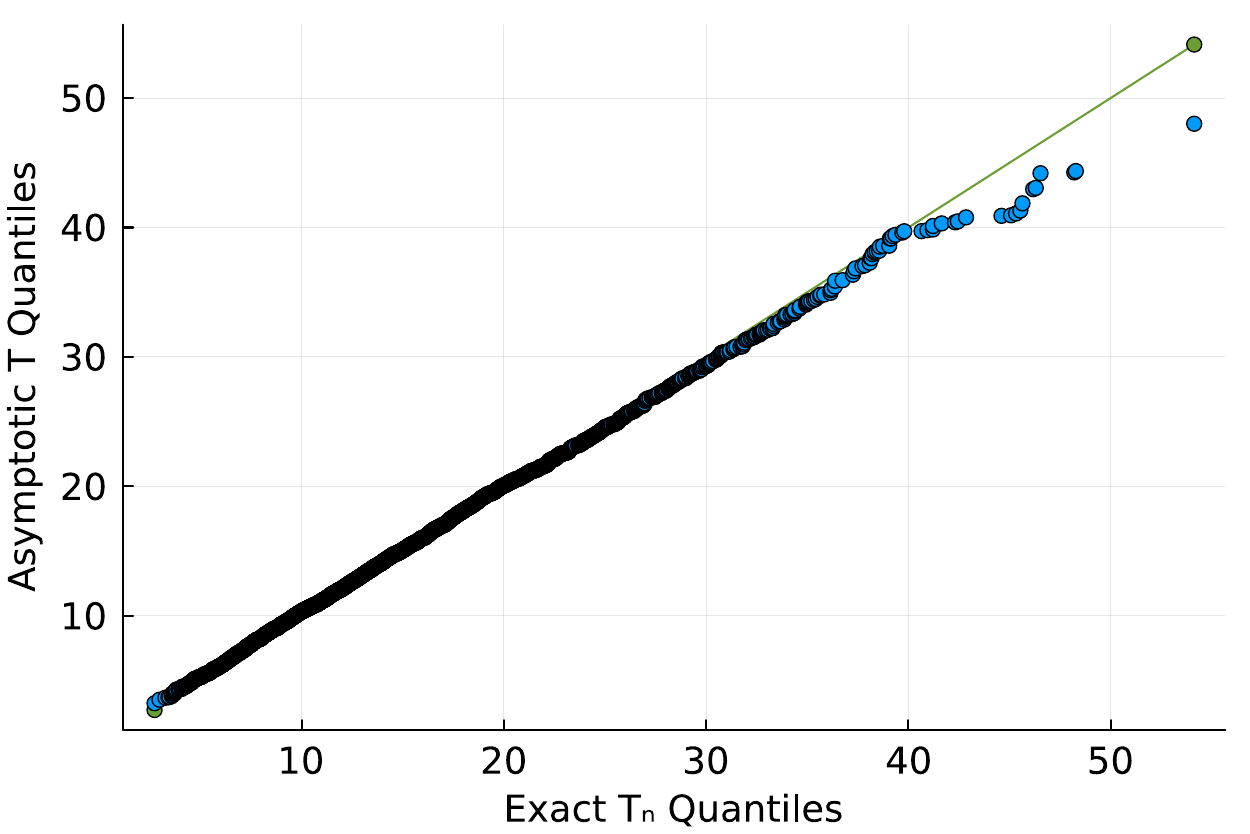}
		\caption{Model 3 QQ‐plot}
	\end{subfigure}
	\caption{Accuracy of asymptotic approximations. The left panels display the empirical finite-sample distribution of the test statistic $T_n$ under $H_0$ (blue) overlaid with its asymptotic distribution (orange) for each of the three models. The right panels present the corresponding QQ-plots, comparing empirical quantiles of $T_n$ to the theoretical asymptotic quantiles.}
	\label{fig:distribution_and_qq}
\end{figure}

To study the finite sample power of the test, we simulate the power curves corresponding to the local alternatives $\beta(s)+\delta s$ with deviations measured by the scale factor $\delta\in[-1,1]$. $\delta=0$ corresponds to $H_0$ while $|\delta|>0$ to $H_1$. We also consider doubling the sample size from $n=100$ to $n=200$. The results are displayed on Figure~\ref{fig:power_curves}, confirming that the test has more power once the null and the alternative hypotheses become sufficiently separated. The power also increases with the sample size as expected. Lastly, we provide additional simulation results in the Supplementary Material.
\begin{figure}[htbp]
	\centering
	\begin{subfigure}[b]{0.5\textwidth}
		\includegraphics[width=\textwidth]{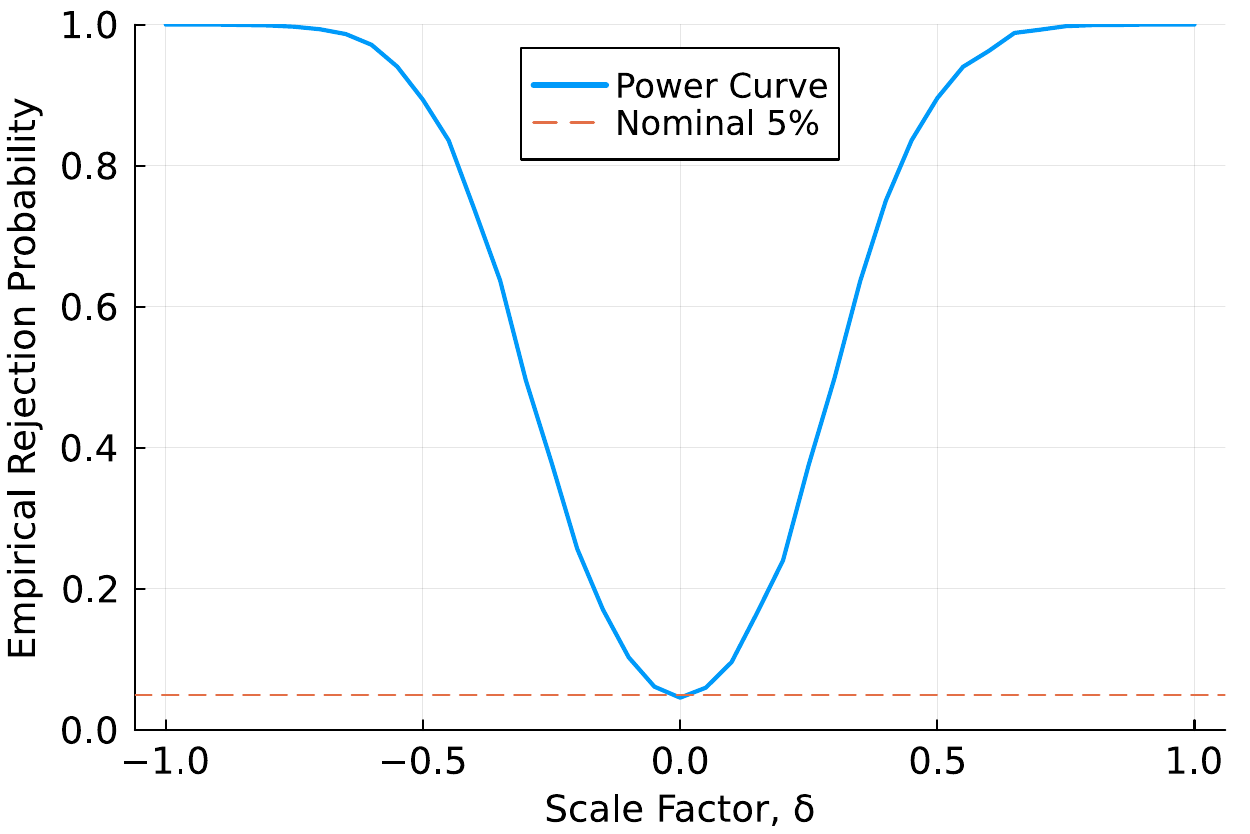}
		\caption{Model 1, sample $n=100$}
		\label{fig:power1}
	\end{subfigure}\hfill
	\begin{subfigure}[b]{0.5\textwidth}
		\includegraphics[width=\textwidth]{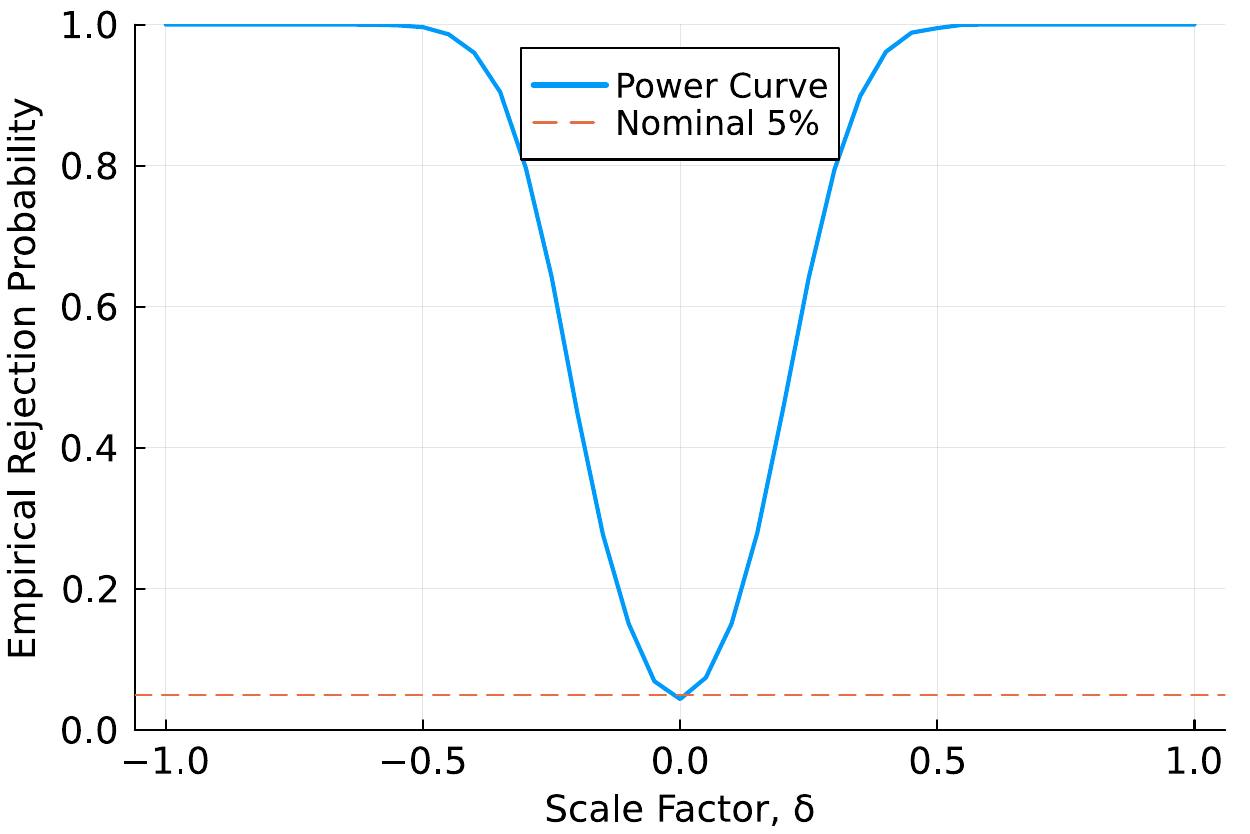}
		\caption{Model 1, sample $n=200$}
		\label{fig:power1_2}
	\end{subfigure}\hfill
	\begin{subfigure}[b]{0.5\textwidth}
		\includegraphics[width=\textwidth]{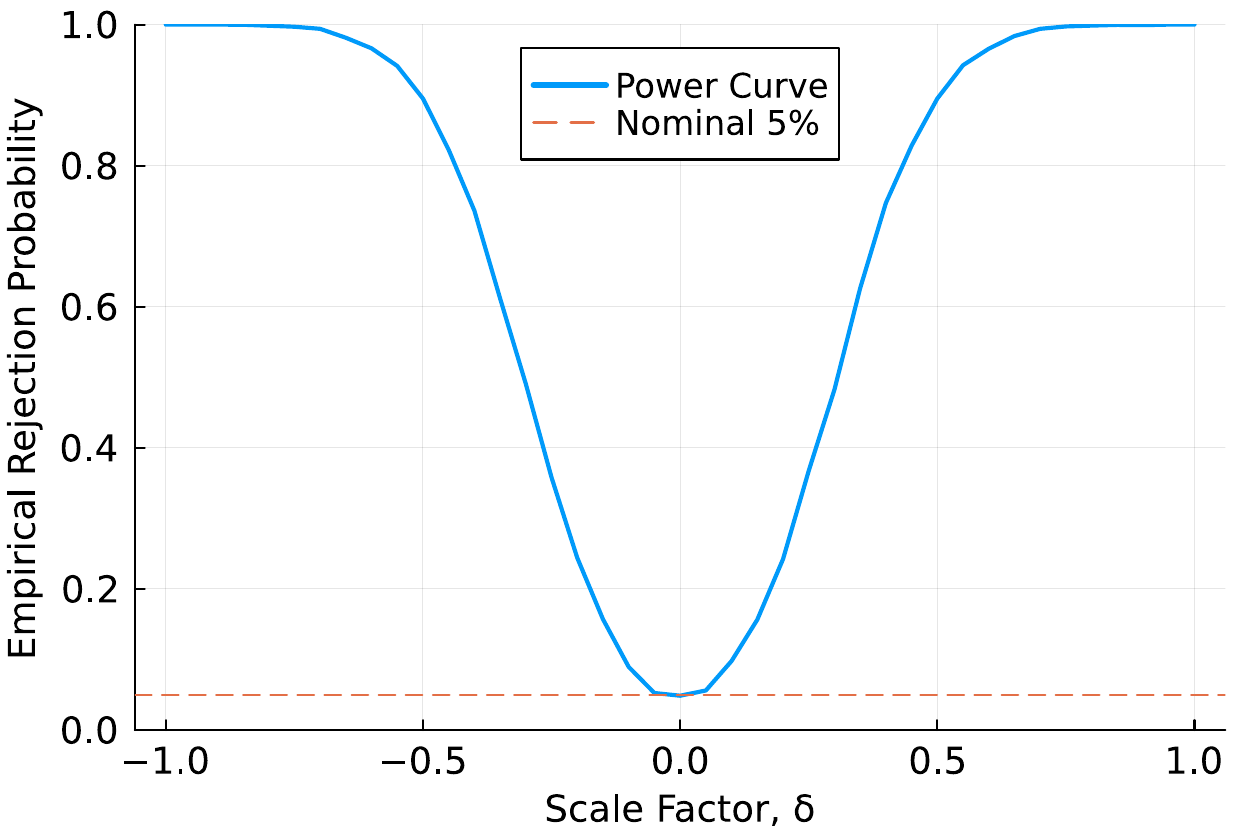}
		\caption{Model 2, sample $n=100$}
		\label{fig:power2}
	\end{subfigure}\hfill
	\begin{subfigure}[b]{0.5\textwidth}
		\includegraphics[width=\textwidth]{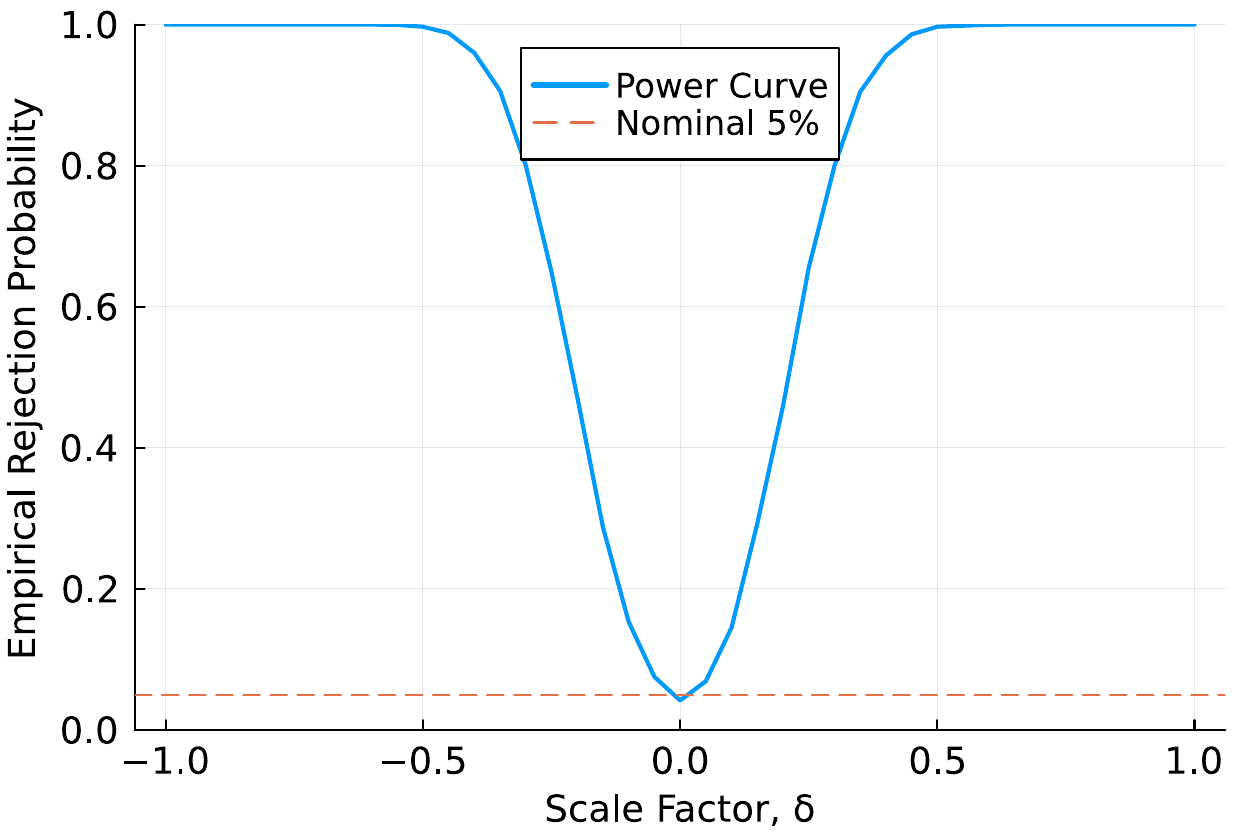}
		\caption{Model 3, sample $n=200$}
		\label{fig:power2_2}
	\end{subfigure}\hfill
	\begin{subfigure}[b]{0.5\textwidth}
		\includegraphics[width=\textwidth]{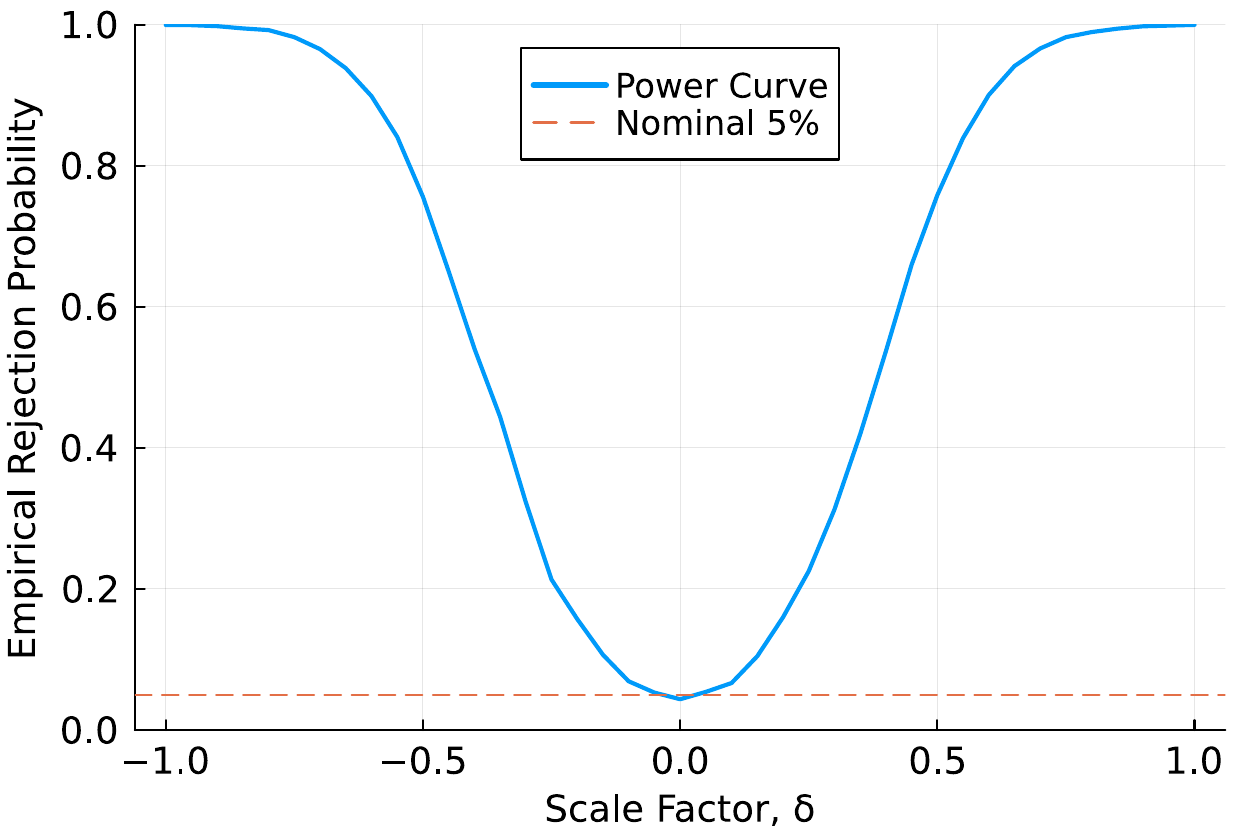}
		\caption{Model 3, sample $n=100$}
		\label{fig:power3}
	\end{subfigure}
	\begin{subfigure}[b]{0.49\textwidth}
		\includegraphics[width=\textwidth]{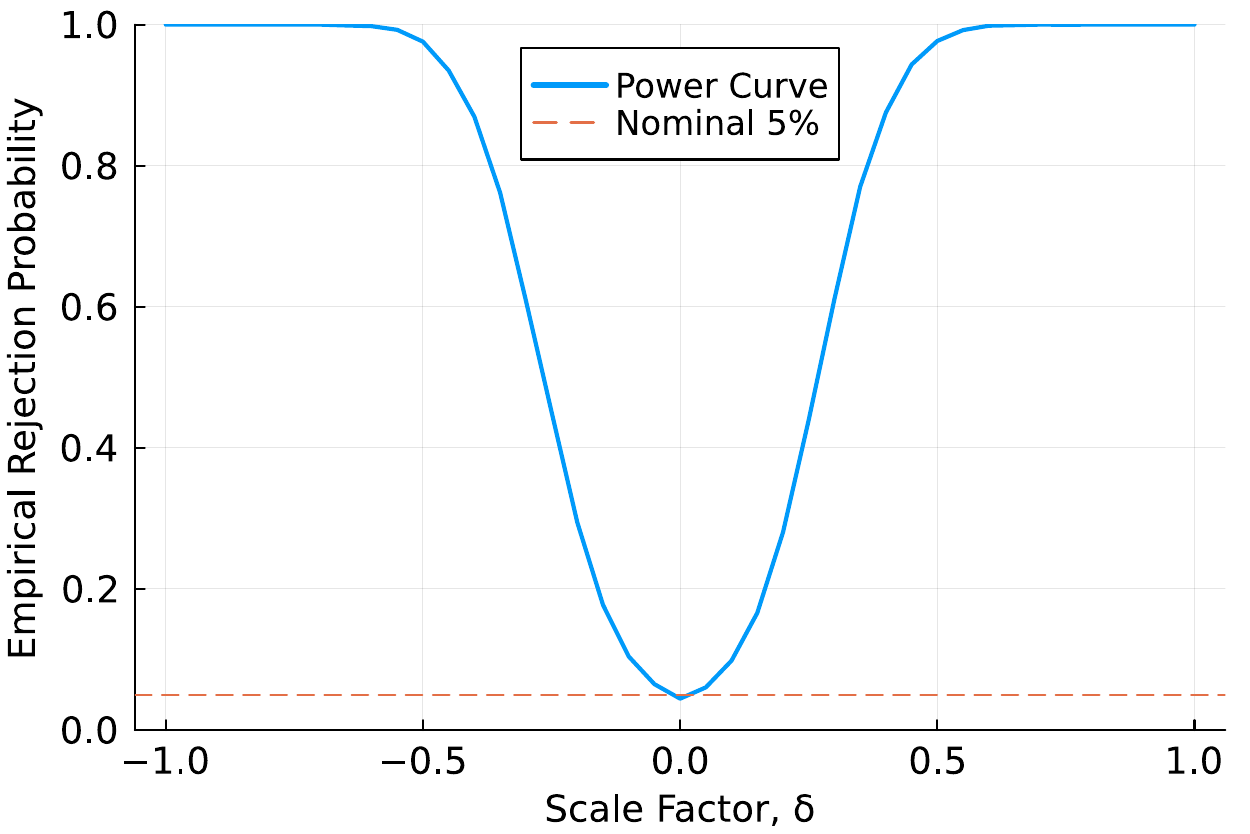}
		\caption{Model 3, sample $n=200$}
		\label{fig:power3_2}
	\end{subfigure}
	
	\caption{Simulated power curves of the test. Power curves for the three models are shown with $n=100$ (left columns) and $n=200$ (right columns). The solid blue line plots the empirical rejection probability against the scale factor~$\delta$, while the red dashed line marks the 5\% nominal significance level.}
	\label{fig:power_curves}
\end{figure}

\section{Nonlinear Temperature Effects in US Agriculture}\label{sec:application}
The global surface temperature has increased by 1.1°C above pre-industrial levels and could increase up to 3.6°C to 4.5°C by the end of the century if current  $CO_2$ emissions rise steadily according to the latest studies; see \cite{lee2023ipcc}. The global warming will likely lead to more frequent and severe heatwaves, altered precipitation patterns, and intensified droughts. Of all major sectors, agriculture is arguably the most sensitive to climate change. While constituting a modest share of developed economies, it is vital for food security. Indeed, the intensified droughts could cause food shortages which in turn may potentially exacerbate mass migration and violent conflicts. Some have argued that the current climates are already warmer than is optimal for agriculture in many parts of Asia, Africa, and Latin America; see \cite{nordhaus2013climate}.

Determining the precise functional form of the relationship between crop yields and temperature has recently attracted lots of attention; see \cite{schlenker2006nonlinear,schlenker2009nonlinear} and \cite{cui2024model}.\footnote{The influential study of \cite{schlenker2009nonlinear} has more than 4,000 Google Scholar citations at the time of writing.} We argue that the methodology used to estimate such nonlinear temperature effects can be understood as a functional linear regression, where the outcome $Y_{it}$ is the log yield of a crop of a county $i$ in a year $t$, measured in bushels per acre, and the functional regressor $(X_{it}(s))_{s\in[0,40]}$ is a temperature curve, representing the crop exposure to temperatures between 0°C to 40°C during the growing season. Following \cite{schlenker2009nonlinear}, the temperature curves are computed at discrete points $s_j\in\{0,1,\dots,40\}$ as $X_{it}(s_j) = \Phi_{it}(s_j+1) - \Phi_{it}(s_j)$, where $\Phi_{it}(s_j)$ is the length of time (measured in days) the crop was continuously exposed to temperature larger than $s_j$ for county $i$ in year $t$.

We focus on corn and soybeans which are the two major crops grown in the US. The dataset is comprised of fine-scale county-level crop yields and weather outcomes, spanning US counties east of the 100 degree meridian from 1950 to 2020.\footnote{The dataset is publicly available at the time of writing at \url{www.wolfram-schlenker.info/replicationFiles/SchlenkerRoberts2009.zip}.} We use the same set of controls as in \cite{schlenker2009nonlinear}, namely: a constant, precipitation measured in mm from March through August, precipitation$^2$, county fixed effects, and a state-specific quadratic time trend to capture technological change. The crop yields $Y$ and the temperature curve $X$ are regressed on these controls to obtain the residuals which are subsequently used for the functional data analysis.

The slope coefficient is then estimated using: 1) our functional PLS estimator; and 2) a highly parameterized least-squares estimator with a step function approximation as in \cite{schlenker2009nonlinear}. The latter fits a separate temperature effect for each 3°C bin from 0°C to 40°C, hence, it involves 13 parameters. On the other hand, our early stopping rule finds $\hat m = 4$ functional PLS components both for corn and soybeans; see Appendix Section~\ref{suppl:simulations} for more details on the implementation.

\begin{figure}[h]
	\caption{Nonlinear relationship between temperature and crop yields fitted using functional PLS (red curve) and step function approximation (black dash).}
	\begin{center}
		\begin{subfigure}{0.49\linewidth}
			\includegraphics[width = \linewidth]{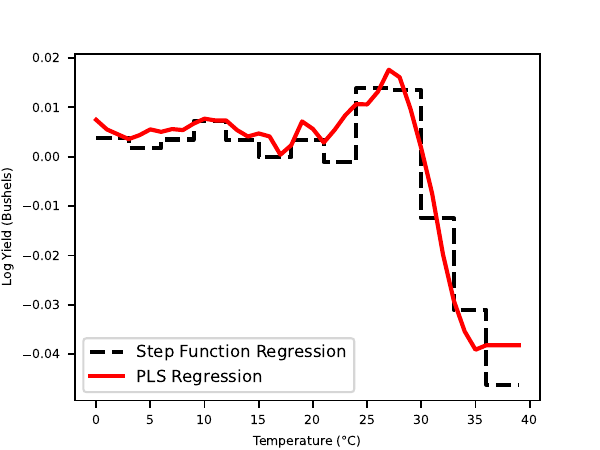} 
			\caption{Impact of Temperature on Corn Yield}
		\end{subfigure}
		\begin{subfigure}{0.49\linewidth}
			\includegraphics[width = \linewidth]{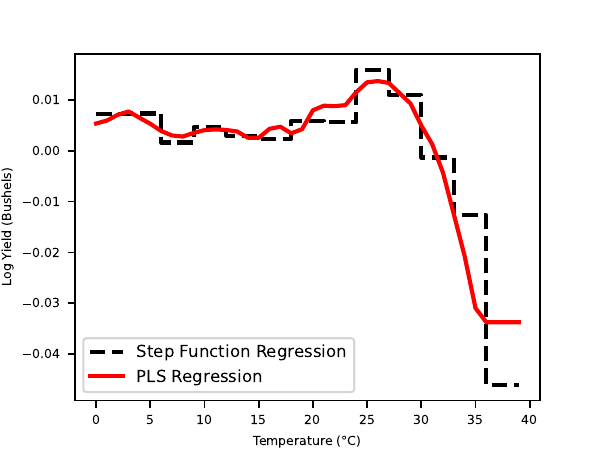}
			\caption{Impact of Temperature on Soybean Yield}
		\end{subfigure}
	\end{center}
	\label{fig:temperature_effects}
\end{figure}

Figure~\ref{fig:temperature_effects} displays the estimated functional slope coefficient $\beta$ corresponding to our functional PLS (red cure) and step function approximation (black dash) for corn and soybeans. We find that the critical temperature after which the crop yields start declining is around 29-30°C which is similar to findings reported in \cite{schlenker2009nonlinear}.

\begin{figure}[ht]
	\caption{Adaptation effects in nonlinear relationship between temperature and crop yields.}
	\begin{center}
		\begin{subfigure}{0.49 \linewidth}
			\includegraphics[width = \linewidth]{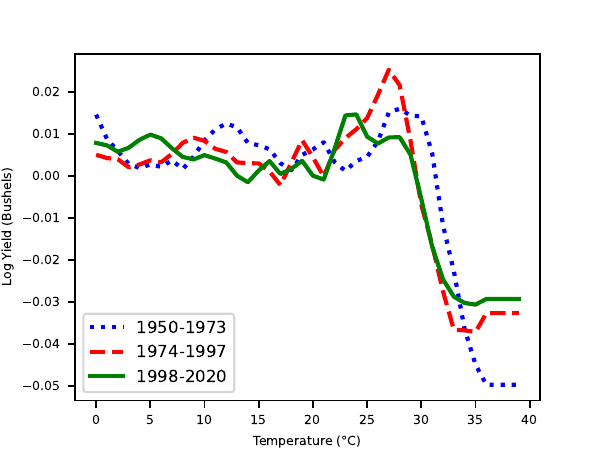} 
			\caption{Impact of Temperature on Corn Yield}
		\end{subfigure}
		\begin{subfigure}{0.49 \linewidth}
			\includegraphics[width = \linewidth]{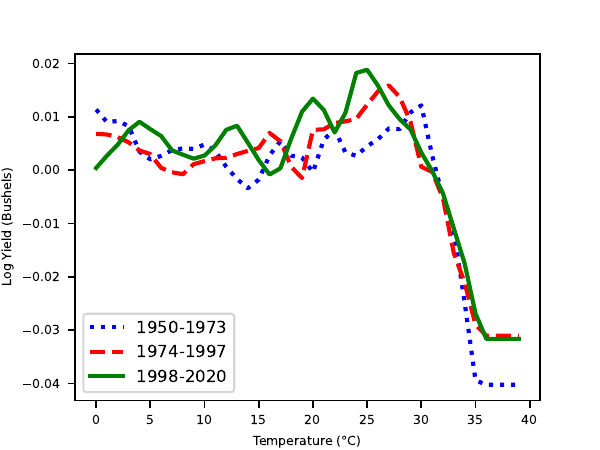}
			\caption{Impact of Temperature on Soybeans Yield}
		\end{subfigure}
	\end{center}
	\label{fig:temperature_effects_adaptation}
\end{figure}

Lastly, we look at how the nonlinear temperature effects have changed over time. Figure~\ref{fig:temperature_effects_adaptation} reports the estimated functional slope coefficient splitting the data into three subsamples: 1950-1973 (blue dot), 1974-1997 (red dash), and 1998-2020 (green curve). The results indicate that the negative temperature effects were larger during 1950-1973 compared  to the most recent 22 years, especially for the extreme temperatures. The mitigation of extreme temperature effects may come from two sources: the adaptation and the $CO_2$ fertilization. The $CO_2$ fertilization effects observed in our sample are likely to be small; see \cite{nordhaus2013climate} who argues that doubling the atmospheric concentration of $CO_2$ would increase crop yields by $10$-$15\%$ only. In contrast, the adaptation effect is likely to dominate over time. It can be attributed to the actions taken by farmers, such as adjusting the sowing and harvesting dates to maximize yields, using more resilient crops, or building efficient irrigation systems. Our results, therefore, suggest some evidence of adaptation in US agriculture which has been reported in \cite{burke2016adaptation} without properly accounting for nonlinearities. However, our confidence sets are too wide to have conclusive evidence when accounting for statistical uncertainty because there are only few observations of extreme temperatures observed in the data.

\section{Conclusions}
\label{sec:conc}

This paper proposes a new formulation of the functional PLS estimator related to the conjugate gradient method applied to an ill-posed inverse problem with a self-adjoint operator. We provide the first optimality result for functional PLS and consider a rate-adaptive early stopping rule to select the optimal number of functional components. The estimator has good estimation and prediction properties for a smaller number of principal components than PCA and the early stopping rule performs well in simulations. We find in an empirical application that the nonlinear temperature effects on crop yields have slightly decreased since 1950 which provides some evidence for adaptation of the US agriculture. However, this evidence is not conclusive due to statistical uncertainty arising from the limited number of observations of extreme temperatures in the data. Future studies need to develop more efficient methods to deal with this problem.

\bibliographystyle{plainnat}
\bibliography{bibliography}

\newpage
\spacingset{1.9} 

\begin{center}
	{\LARGE\bf SUPPLEMENTARY MATERIAL \\
		\vspace{20pt} 
		Functional Partial Least-Squares: Adaptive Estimation and Inference}
\end{center}

\setcounter{section}{0}
\renewcommand{\theequation}{S.\arabic{equation}}
\renewcommand\thetheorem{S.\arabic{theorem}}
\renewcommand\thelemma{S.\arabic{lemma}}
\renewcommand\theproposition{S.\arabic{proposition}}
\renewcommand\thetable{S.\arabic{table}}
\renewcommand\thefigure{S.\arabic{figure}}
\renewcommand\thesection{S.\arabic{section}}
\renewcommand\thesubsection{S.\arabic{section}.\arabic{subsection}}
\renewcommand\thepage{Supplementary Material - \arabic{page}}

\section{Notation and Preliminary Results}\label{appn:suppl}
In this section, we describe the notation and collect several propositions and lemmas. 

\paragraph{Notation:} For two sequences $(a_n)_{n\geq 1}$ and $(b_n)_{n\geq 1}$, we will use $a_n\lesssim b_n$ if there exists a constant $c>0$ such that $a_n\leq cb_n$ for all $n\geq 1$. We will also use $a_n\sim b_n$ if $a_n\lesssim b_n$ and $b_n\lesssim a_n$.  For two real numbers, we use $a\wedge b = \min\{a,b\}$ and $a\vee b = \max\{a,b\}$. 

The following proposition states that the PLS estimator $\hat\beta_m$ is unique for every $m\leq n_*$ and that the tuning parameter selected in Assumption~\ref{as:stopping} does not exceed the number of unique non-zero eigenvalues, $n_*$; see also \cite{blanchard2016convergence} for a kernel regression model setting.
\begin{proposition}\label{prop:uniquness}
	The solution in equation~(\ref{eq:pls_problem}) is unique for every $m\leq n_*$. Moreover, $\hat m\leq n_*$.
\end{proposition}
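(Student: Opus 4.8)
The plan is to read off the conclusion from the finite-rank spectral structure of $\hat K = T_n^*T_n$ together with the Krylov-subspace description of $\hat\beta_m$. First I would note that $\hat r = T_n^*\mathbf{y}$ lies in $\mathrm{range}(T_n^*) = \mathrm{range}(\hat K)$ --- using the elementary identity $\mathrm{range}(A^*A) = \mathrm{range}(A^*)$, which is immediate here since $T_n$ has finite rank so all ranges involved are closed --- hence $\hat r \perp \ker\hat K$. Denoting the distinct non-zero eigenvalues of $\hat K$ by $\mu_1 > \dots > \mu_{n_*} > 0$ with associated orthogonal eigenprojections $P_1,\dots,P_{n_*}$, this gives $\hat r = \sum_{l=1}^{n_*}P_l\hat r$ and therefore $\hat K^k\hat r = \sum_{l=1}^{n_*}\mu_l^k\, P_l\hat r$ for every $k\ge 0$. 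In particular every vector of $\mathbb{H}_m = \mathrm{span}\{\hat r,\hat K\hat r,\dots,\hat K^{m-1}\hat r\}$ is a linear combination of the $P_l\hat r$'s, so $\mathbb{H}_m\subseteq\mathrm{range}(\hat K)$.

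For the uniqueness claim, the key point is that $\hat K$ is injective on $\mathrm{range}(\hat K)$ --- its null space being the orthogonal complement of its range --- hence injective on $\mathbb{H}_m$. Consequently $b\mapsto\|\hat Kb\|^2$ is a positive-definite quadratic form on the finite-dimensional subspace $\mathbb{H}_m$, the objective $b\mapsto\|\hat r-\hat Kb\|^2$ in (\ref{eq:pls_problem}) is strictly convex there, and the (self-adjoint) normal equations $P_{\mathbb{H}_m}\hat K^2 b = P_{\mathbb{H}_m}\hat K\hat r$ have a unique solution $\hat\beta_m\in\mathbb{H}_m$; this proves uniqueness for every $m\le n_*$ (indeed for every $m\ge 1$). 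If one wants instead uniqueness of the coefficient vector $\mathbf{a}$ solving $\mathbf{K}\mathbf{a}=\mathbf{r}$, one checks via a Vandermonde argument that the mutually orthogonal vectors $(P_l\hat r)_{l\in L}$, with $L:=\{l:P_l\hat r\ne 0\}$, make the Gram matrix $\mathbf{K}$ nonsingular precisely when $m\le|L|$, and $|L|\le n_*$.

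It remains to show $\hat m\le n_*$, for which it suffices to exhibit a point of $\mathbb{H}_{n_*}$ at which the fitted moment vanishes, since then $\|\hat r-\hat K\hat\beta_{n_*}\| = 0$ and the early stopping rule of Assumption~\ref{as:stopping} must terminate no later than step $n_*$. Let $p$ be the Lagrange interpolating polynomial of degree at most $n_*-1$ with $p(\mu_l)=\mu_l^{-1}$ for $l=1,\dots,n_*$. Then $p(\hat K)\hat r = \sum_{l=1}^{n_*}p(\mu_l)P_l\hat r = \sum_{l=1}^{n_*}\mu_l^{-1}P_l\hat r$ belongs to $\mathbb{H}_{n_*}$ (being a polynomial of degree $\le n_*-1$ in $\hat K$ applied to $\hat r$), and $\hat K\,p(\hat K)\hat r = \sum_{l=1}^{n_*}P_l\hat r = \hat r$ because $\hat r\in\mathrm{range}(\hat K)$. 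Hence $\min_{b\in\mathbb{H}_{n_*}}\|\hat r-\hat Kb\| = 0$, giving $\hat m\le n_*$.

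The only delicate ingredients are the two facts that make the argument run: that $\hat r$ carries no component in $\ker\hat K$, and the spectral reduction $\hat K^k\hat r=\sum_l\mu_l^k P_l\hat r$ that places $\mathbb{H}_m$ inside $\mathrm{range}(\hat K)$. Once these are in place, the statement is essentially the classical finite-termination property of the conjugate gradient method (cf. \cite{hanke1995conjugate}, \cite{engl1996regularization}, \cite{blanchard2016convergence}) transported to $\hat K=T_n^*T_n$, and everything else --- strict convexity, solvability of the normal equations, Lagrange interpolation --- is routine.
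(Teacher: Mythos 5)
Your proof is correct, but it takes a genuinely different route from the paper's. The paper works at the level of residual polynomials: it recasts (\ref{eq:pls_problem}) as minimizing $\phi\mapsto[\phi,\phi]_0$ over $\mathcal{P}_m^0$, observes that $[.,.]_0$ is an inner product for $m\leq n_*-1$ so that $\hat Q_m$ is the unique metric projection of $0$ onto a closed convex subset, and handles $m=n_*$ by uniqueness of the degree-$n_*$ polynomial vanishing at all distinct eigenvalues; the vanishing of the objective at $m=n_*$ then gives $\hat m\leq n_*$. You instead argue directly on the vector $\hat\beta_m$: since $\hat r\in\mathrm{range}(\hat K)$ the Krylov subspace sits inside $\mathrm{range}(\hat K)$, where $\hat K$ is injective, so $b\mapsto\|\hat r-\hat Kb\|^2$ is strictly convex on the finite-dimensional $\mathbb{H}_m$ and the minimizer is unique; the Lagrange interpolant of $\lambda\mapsto\lambda^{-1}$ at the distinct eigenvalues exhibits an exact solution in $\mathbb{H}_{n_*}$, forcing the stopping rule to fire by step $n_*$. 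Your version buys something the paper's does not make explicit: it cleanly separates uniqueness of the \emph{estimator} $\hat\beta_m$ (which you show holds for every $m$, even when $\hat r$ has zero component in some eigenspaces so that the Krylov subspace saturates early and the coefficient vector $\mathbf{a}$, equivalently the polynomial $\hat P_m$, is no longer unique) from uniqueness of the representing polynomial, which you correctly tie to $m\leq|L|$ via the Vandermonde argument. The paper's polynomial formulation, on the other hand, is not gratuitous: the objects it introduces here ($\hat Q_m$, the inner products $[.,.]_k$) are the workhorses of Lemmas~\ref{lemma:polynomials}--\ref{lemma:Q_prime} and the main rate proofs, so establishing the proposition in that language sets up the rest of the analysis. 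Both arguments are sound; yours is the more elementary and self-contained of the two.
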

\begin{proof}[\rm \textbf{Proof of Proposition~\ref{prop:uniquness}}]
	Let $\mathcal{P}_m$ be the space of real polynomials of degree at most $m$ and let $\mathcal{P}_m^0$ be its subspace of polynomials with constant equal to one. The PLS problem in equation (\ref{eq:pls_problem}) amounts to fitting a polynomial of degree $m-1$ solving
	\begin{equation*}
		\hat P_m \in\argmin_{\phi\in\mathcal{P}_{m-1}}\left\|\left[I - \hat K\phi(\hat K)\right]\hat r\right\|^2
	\end{equation*}
	or equivalently a residual polynomial $\hat Q_m(\lambda) = 1 - \lambda \hat P_m(\lambda)$ solving
	\begin{equation}\label{eq:residual_problem}
		\hat Q_m \in \argmin_{\phi\in\mathcal{P}_{m}^0}\left\|\phi(\hat K)\hat r\right\|^2.
	\end{equation}
	By Parseval's identity, for every $\phi:[0,\hat\lambda_1]\to\mathbb{R}$, the objective function can be written as
	\begin{equation}\label{eq:inner_product}
		\left\| \phi(\hat K)\hat r \right\|^2 = \sum_{j=1}^{n_*}\phi(\hat\lambda_j)^2\langle \hat r,\hat v_j\rangle^2 = [\phi,\phi]_0,
	\end{equation}
	where $[.,.]_0$ is defined in equation~(\ref{eq:inner_prod_k}).	Therefore $\hat Q_m$ minimizes $\phi\mapsto [\phi,\phi]_0$ on $\mathcal{P}_m^0$. It is easy to see that $[.,.]_0$ is an inner product for every $m\leq n_*-1$. Therefore, $\hat Q_m$ is the unique projection of zero on a closed subspace $\mathcal{P}_m^0\subset \mathcal{P}_m$ with respect to $[.,.]_0$. For $m=n_*$, $[.,.]_0$ is not an inner product because we can take an $n_*$-degree polynomial $\phi\ne 0$ with roots equal to the distinct $n_*$ eigenvalues of $\hat K$, so that $[\phi,\phi]_0=0$. However, such a polynomial is unique. Therefore, $\hat P_m$ and $\hat \beta_m$ are unique for every $m\leq n_*$. This also shows that the PLS objective function is minimized to zero for $m\geq n_*$, so that the tuning parameter in Assumption~\ref{as:stopping} satisfies $\hat m\leq n_*$.
\end{proof}

We will need the following tail inequality in Hilbert spaces.
\begin{lemma}\label{lemma:chebyshev_inequality}
	Let $(\xi_i)_{i=1}^n$ be i.i.d.\ random variables in a Hilbert space $(\mathbb{H},\langle.,.\rangle)$ with the induced norm $\|.\|$. Suppose that $\E\xi_i=0$ and $\E\|\xi_i\|^2<\infty$. Then for every $\gamma\in(0,1)$
	\begin{equation*}
		\Pr\left(\left\|\frac{1}{n}\sum_{i=1}^n\xi_i \right\|\leq \sqrt{\frac{\E\|\xi_i\|^2}{\gamma n}}\right) \geq 1 - \gamma.
	\end{equation*}
\end{lemma}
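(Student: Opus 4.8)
The plan is to reduce the statement to a one-line application of Markov's inequality to the nonnegative random variable $\left\|n^{-1}\sum_{i=1}^n\xi_i\right\|^2$, once its expectation has been computed exactly.

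First I would establish the second-moment identity $\E\left\|n^{-1}\sum_{i=1}^n\xi_i\right\|^2 = n^{-1}\E\|\xi_1\|^2$. Expanding the squared norm through the inner product gives $n^{-2}\sum_{i,j}\E\langle\xi_i,\xi_j\rangle$. For $i\neq j$, independence of $\xi_i$ and $\xi_j$ combined with $\E\xi_i=0$ forces $\E\langle\xi_i,\xi_j\rangle = \langle\E\xi_i,\E\xi_j\rangle = 0$. The diagonal terms contribute $n^{-2}\sum_{i=1}^n\E\|\xi_i\|^2 = n^{-1}\E\|\xi_1\|^2$ by the i.i.d.\ assumption. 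Note that all quantities are finite because $\E\|\xi_i\|^2<\infty$ implies $\E\|\xi_i\|\leq(\E\|\xi_i\|^2)^{1/2}<\infty$, so the Bochner integral $\E\xi_i$ is well defined and the expectations above make sense.

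Next I would apply Markov's inequality to $\left\|n^{-1}\sum_{i=1}^n\xi_i\right\|^2$: for any $t>0$, $\Pr\!\left(\left\|n^{-1}\sum_{i=1}^n\xi_i\right\|^2 > t\right) \leq t^{-1}\E\left\|n^{-1}\sum_{i=1}^n\xi_i\right\|^2 = (nt)^{-1}\E\|\xi_1\|^2$. Choosing $t = \E\|\xi_1\|^2/(\gamma n)$ makes the right-hand side exactly $\gamma$, so $\Pr\!\left(\left\|n^{-1}\sum_{i=1}^n\xi_i\right\| > \sqrt{\E\|\xi_1\|^2/(\gamma n)}\right)\leq\gamma$, and taking complements yields the claimed bound.

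There is no serious obstacle here; the only point that deserves a word of justification is that the cross terms vanish in an infinite-dimensional space, i.e.\ that $\E\langle\xi_i,\xi_j\rangle = \langle\E\xi_i,\E\xi_j\rangle$ for independent $\xi_i,\xi_j$. This follows either from Fubini's theorem applied to the product measure (the integrand is absolutely integrable since $|\langle\xi_i,\xi_j\rangle|\leq\|\xi_i\|\,\|\xi_j\|$ and $\E\|\xi_i\|\,\|\xi_j\| = \E\|\xi_i\|\,\E\|\xi_j\|<\infty$), or from the observation that $x\mapsto\E\langle x,\xi_j\rangle = \langle x,\E\xi_j\rangle$ is a bounded linear functional that may be interchanged with the Bochner integral $\E\xi_i$. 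Either way the zero-mean assumption closes the argument.
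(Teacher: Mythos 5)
Your argument is correct and is essentially identical to the paper's own proof: both apply Markov's (Chebyshev's) inequality to the squared norm, kill the cross terms $\E\langle\xi_i,\xi_j\rangle$ using independence and the zero-mean assumption, and solve for the threshold. Your extra remark justifying the interchange $\E\langle\xi_i,\xi_j\rangle=\langle\E\xi_i,\E\xi_j\rangle$ via Fubini or boundedness of the linear functional is a welcome detail that the paper leaves implicit.
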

\begin{proof}[\rm \textbf{Proof of Lemma~\ref{lemma:chebyshev_inequality}}]
	By Markov's inequality, $\forall u>0$
	\begin{equation*}
		\begin{aligned}
			\Pr\left(\left\|\frac{1}{n}\sum_{i=1}^n\xi_i \right\| > u\right) & \leq u^{-2}\E\left\|\frac{1}{n}\sum_{i=1}^n\xi_i \right\|^2 \\
			& = \frac{1}{u^2n^2}\sum_{i=1}^n\sum_{j=1}^n\E\left\langle \xi_i, \xi_j \right\rangle \\
			& = \frac{1}{u^2n}\E\left\|\xi_i\right\|^2,
		\end{aligned}
	\end{equation*}
	where the last two lines follow under the i.i.d. hypothesis. Setting $\gamma = \E\|\xi_i\|^2/(nu^2)$ and solving for $u$ gives the result.
\end{proof}

Lemma~\ref{lemma:chebyshev_inequality} allows us to control the tail probabilities for the PLS residual as well as the covariance operator errors on an event with probability at least $1-\gamma$. 
\begin{lemma}\label{lemma:probability_bounds}
	Suppose that Assumption~\ref{as:data} is satisfied. Then for every $\gamma\in(0,1)$
	\begin{equation*}
		\left\|\hat r - \hat K\beta\right\| \leq \sigma\sqrt{\frac{2\E\|X\|^2}{\gamma n}} \qquad \text{and}\qquad \left\|\hat K - K\right\|_{\rm HS} \leq \sqrt{\frac{2\E\|X\|^4}{\gamma n}}
	\end{equation*}
	with probability at least $1-\gamma$, where $\|.\|_{\rm HS}$ is the Hilbert-Schmidt norm.
\end{lemma}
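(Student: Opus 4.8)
The plan is to apply the Hilbert-space Chebyshev bound of Lemma~\ref{lemma:chebyshev_inequality} twice, once to a sum of $\mathbb{H}$-valued summands controlling the PLS residual $\hat r - \hat K\beta$, and once to a sum of Hilbert--Schmidt-operator-valued summands controlling $\hat K - K$. The key observation is that both $\hat r - \hat K\beta$ and $\hat K - K$ are empirical averages of i.i.d.\ mean-zero terms, so the main work is to identify those terms and bound their second moments.

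First I would write $\hat r - \hat K\beta = \frac1n\sum_{i=1}^n \xi_i$ with $\xi_i := Y_iX_i - (X_i\otimes X_i)\beta = (Y_i - \langle X_i,\beta\rangle)X_i = \varepsilon_i X_i$, using the model equation $Y_i = \langle\beta,X_i\rangle + \varepsilon_i$. These are i.i.d.\ elements of $\mathbb{H}$, and $\E[\xi_i] = \E[\varepsilon_i X_i] = 0$ by the moment condition in~(\ref{eq:moment}) (equivalently $\E[\varepsilon X]=0$). For the second moment, conditioning on $X_i$ gives $\E\|\xi_i\|^2 = \E[\varepsilon_i^2\|X_i\|^2] = \E[\E[\varepsilon_i^2|X_i]\|X_i\|^2] \leq \sigma^2\E\|X_i\|^2$ by Assumption~\ref{as:data}. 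Plugging $\E\|\xi_i\|^2 \leq \sigma^2\E\|X\|^2$ into Lemma~\ref{lemma:chebyshev_inequality} with $\gamma$ there set to $\gamma/2$ yields $\|\hat r - \hat K\beta\| \leq \sigma\sqrt{2\E\|X\|^2/(\gamma n)}$ with probability at least $1-\gamma/2$.

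Next I would treat $\hat K - K$ analogously in the Hilbert space of Hilbert--Schmidt operators on $\mathbb{H}$, which is itself a separable Hilbert space with inner product $\langle A,B\rangle_{\rm HS} = \mathrm{tr}(A^*B)$ and norm $\|\cdot\|_{\rm HS}$. Set $\eta_i := X_i\otimes X_i - K$; these are i.i.d., mean-zero (since $\E[X_i\otimes X_i] = K$), and $\|\eta_i\|_{\rm HS} \leq \|X_i\otimes X_i\|_{\rm HS} + \|K\|_{\rm HS}$ with $\|X_i\otimes X_i\|_{\rm HS} = \|X_i\|^2$. A short computation gives $\E\|\eta_i\|_{\rm HS}^2 = \E\|X_i\otimes X_i\|_{\rm HS}^2 - \|K\|_{\rm HS}^2 \leq \E\|X\|^4$, which is finite by Assumption~\ref{as:data}. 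Applying Lemma~\ref{lemma:chebyshev_inequality} in this Hilbert space, again with the parameter set to $\gamma/2$, gives $\|\hat K - K\|_{\rm HS} \leq \sqrt{2\E\|X\|^4/(\gamma n)}$ with probability at least $1-\gamma/2$. A union bound over the two events then delivers both inequalities simultaneously with probability at least $1-\gamma$.

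I do not expect a serious obstacle here; the only points needing a little care are (i) verifying that $\E\|\xi_i\|^2$ and $\E\|\eta_i\|_{\rm HS}^2$ are finite and correctly bounded, which is where Assumption~\ref{as:data}'s moment conditions ($\E\|X\|^4<\infty$ and $\E[\varepsilon^2|X]\leq\sigma^2$) enter, and (ii) making sure Lemma~\ref{lemma:chebyshev_inequality} applies verbatim in the operator Hilbert space, which it does since that space is separable and the lemma's proof only used the inner-product structure and i.i.d.\ mean-zero assumptions. The factor of $2$ in the bounds comes precisely from splitting the failure probability $\gamma$ evenly across the two events.
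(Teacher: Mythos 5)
Your proposal is correct and follows essentially the same route as the paper's own proof: identify $\hat r - \hat K\beta$ as the average of the i.i.d.\ mean-zero terms $\varepsilon_i X_i$ with second moment bounded by $\sigma^2\E\|X\|^2$, view $\hat K - K$ as an average in the Hilbert space of Hilbert--Schmidt operators with second moment bounded by $\E\|X\|^4$, apply Lemma~\ref{lemma:chebyshev_inequality} to each with failure probability $\gamma/2$, and combine via the union bound. Your additional remarks (the exact variance identity $\E\|\eta_i\|_{\rm HS}^2 = \E\|X_i\otimes X_i\|_{\rm HS}^2 - \|K\|_{\rm HS}^2$ and the separability of the operator space) are slightly more explicit than the paper but change nothing of substance.
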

\begin{proof}[\rm \textbf{Proof of Lemma~\ref{lemma:probability_bounds}}]
	We will apply Lemma~\ref{lemma:chebyshev_inequality}. First, we note that
	\begin{equation*}
		\left\|\hat r - \hat K\beta\right\| = \left\|\frac{1}{n}\sum_{i=1}^n\varepsilon_iX_i \right\|,
	\end{equation*}
	where $\E\|\varepsilon_iX_i\|^2 \leq \sigma^2\E\|X_i\|^2<\infty$ under Assumption~\ref{as:data}. Then by Lemma~\ref{lemma:chebyshev_inequality} with probability at least $1-\gamma/2$, we have $\|\hat r - \hat K\beta\| \leq \sigma\sqrt{2\E\|X\|^2/\gamma n}$. Second, the space of Hilbert-Schmidt operators is a Hilbert space and
	\begin{equation*}
		\left\|\hat K - K\right\|_{\rm HS} = \left\|\frac{1}{n}\sum_{i=1}^nX_i\otimes X_i - \E[X_i\otimes X_i] \right\|_{\rm HS},
	\end{equation*}
	where $\E\|X_i\otimes X_i - \E[X_i\otimes X_i]\|^2_{\rm HS}\leq \E\|X_i\otimes X_i\|_{\rm HS}^2= \E\|X_i\|^4$. Then by Lemma~\ref{lemma:chebyshev_inequality} with probability at least $1-\gamma/2$, we have $ \|\hat K - K\|_{\rm HS} \leq \sqrt{2\E\|X\|^4/\gamma n}$. The result follows by the union bound.
\end{proof}

We will also use the following two inequalities known in the perturbation theory.
\begin{lemma}\label{lemma:perturbation_inequality}
	Let $A:\mathbb{H}\to\mathbb{H}$ and $B:\mathbb{H}\to\mathbb{H}$ be two self-adjoint Hilbert-Schmidt operators. Then
	\begin{equation*}
		\|A^{\mu} - B^{\mu}\|_{\rm op} \leq c_{\mu} \|A - B\|_{\rm op}^{\mu},\qquad 0<\mu<1
	\end{equation*}
	and
	\begin{equation*}
		\|A^{\mu} - B^{\mu}\|_{\rm HS} \leq \mu\nu^{\mu-1}\|A - B\|_{\rm HS},\qquad \mu\geq 1,
	\end{equation*}
	where $\nu=\|A\|_{\rm op} \vee\|B\|_{\rm op}$.
\end{lemma}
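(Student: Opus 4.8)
The plan is to prove the two operator inequalities separately, treating them as standard facts from perturbation theory applied to self-adjoint Hilbert--Schmidt operators. I would assume $A$ and $B$ are positive semidefinite (which is how the lemma is used in the paper, since $A,B$ will be covariance operators), so that the fractional powers $A^\mu$, $B^\mu$ are well defined via the spectral calculus; the function $x\mapsto x^\mu$ on $[0,\infty)$ is then the relevant object.

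\textbf{The subadditive case $0<\mu<1$.} For the operator-norm bound I would rely on the fact that $x\mapsto x^\mu$ is operator monotone and operator concave on $[0,\infty)$ for $\mu\in(0,1)$ (L\"owner's theorem). A convenient route is the integral representation $x^\mu = c_\mu\int_0^\infty \frac{x}{x+t}\,t^{\mu-1}\,\mathrm{d}t$ with $c_\mu = \sin(\pi\mu)/\pi$. Writing $A^\mu - B^\mu = c_\mu\int_0^\infty\big[(A+tI)^{-1}A - (B+tI)^{-1}B\big]t^{\mu-1}\,\mathrm{d}t$ and simplifying $(A+tI)^{-1}A = I - t(A+tI)^{-1}$, the difference becomes $-t\big[(A+tI)^{-1} - (B+tI)^{-1}\big] = t(A+tI)^{-1}(A-B)(B+tI)^{-1}$ by the second resolvent identity. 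Taking operator norms and using $\|(A+tI)^{-1}\|_{\rm op}\le 1/t$, $\|(B+tI)^{-1}\|_{\rm op}\le 1/t$ gives an integrand bounded by $\min\{\|A-B\|_{\rm op}/t,\,1\}t^{\mu-1}$ (the second bound coming from $\|(A+tI)^{-1}A\|_{\rm op}\le 1$ and similarly for $B$). Splitting the integral at $t=\|A-B\|_{\rm op}$ and integrating $t^{\mu-1}$ and $t^{\mu-2}$ on the two pieces yields $\|A^\mu-B^\mu\|_{\rm op}\le c_\mu\big(\tfrac1\mu + \tfrac1{1-\mu}\big)\|A-B\|_{\rm op}^\mu$, which is the claimed bound with $c_\mu$ redefined to absorb the constants. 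This is the standard argument and I expect no real obstruction here.

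\textbf{The Lipschitz case $\mu\ge 1$.} Here I would use the mean-value-type inequality for Hilbert--Schmidt perturbations: for a continuously differentiable $f$ on an interval containing the spectra of $A$ and $B$, one has $\|f(A)-f(B)\|_{\rm HS}\le \big(\sup|f'|\big)\|A-B\|_{\rm HS}$, where the supremum is over the smallest interval containing $\mathrm{spec}(A)\cup\mathrm{spec}(B)$. Applying this with $f(x)=x^\mu$ on $[0,\nu]$, where $\nu=\|A\|_{\rm op}\vee\|B\|_{\rm op}$ bounds both spectra, gives $\sup_{x\in[0,\nu]}|f'(x)| = \mu\nu^{\mu-1}$ since $f'(x)=\mu x^{\mu-1}$ is increasing for $\mu\ge 1$. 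That is exactly the stated inequality. The underlying Hilbert--Schmidt mean-value inequality can itself be derived by writing $A^\mu - B^\mu$ in the eigenbasis via the divided-difference (Daleckii--Krein) representation: if $A=\sum_i a_i\,e_i\otimes e_i$ and $B=\sum_j b_j\,f_j\otimes f_j$, then $\langle (A^\mu-B^\mu)f_j,e_i\rangle = \frac{a_i^\mu-b_j^\mu}{a_i-b_j}\langle (A-B)f_j,e_i\rangle$, and the divided difference is bounded by $\sup_{[0,\nu]}|f'| = \mu\nu^{\mu-1}$ by the mean value theorem; squaring and summing over $i,j$ gives the Hilbert--Schmidt bound.

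\textbf{Main obstacle.} The routine calculations are mild; the one point requiring care is justifying the divided-difference / integral representations rigorously in the infinite-dimensional setting — in particular convergence of the resolvent integral in operator norm and the legitimacy of the Daleckii--Krein formula for unbounded-index eigenexpansions of compact operators. Since $A,B$ are Hilbert--Schmidt (hence compact with discrete spectrum accumulating only at $0$), these are handled by a standard truncation-and-limit argument, so I would either cite a perturbation-theory reference (e.g.\ the relevant chapters of the operator-theory literature) or sketch the truncation. I do not anticipate any genuinely hard step — the lemma is a packaging of known results, and the value of the proof is mainly in recording the explicit constants $c_\mu$ and $\mu\nu^{\mu-1}$ in the form used later.
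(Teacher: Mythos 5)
Your proposal is correct, but it takes a different route from the paper: the paper does not prove the lemma at all, it simply cites \cite{aleksandrov2016operator} (Theorem 1.7.2 for the H\"older bound in operator norm and Theorem 3.5.1 for the Lipschitz bound in Hilbert--Schmidt norm), whereas you supply a self-contained derivation. Your argument for $0<\mu<1$ via the Balakrishnan integral representation $x^{\mu}=c_{\mu}\int_0^\infty \frac{x}{x+t}t^{\mu-1}\,\mathrm{d}t$, the second resolvent identity, and the split of the integral at $t=\|A-B\|_{\rm op}$ is the standard elementary proof of operator H\"older continuity of fractional powers of \emph{positive} operators; it is less general than the Aleksandrov--Peller machinery (which covers arbitrary H\"older functions of self-adjoint operators) but entirely adequate here, and it makes the constant $c_\mu$ explicit. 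Your $\mu\ge 1$ argument via the Daleckii--Krein divided-difference representation in the two eigenbases, plus Parseval, is likewise the standard proof of the Hilbert--Schmidt mean-value inequality and gives exactly the constant $\mu\nu^{\mu-1}$. Two small points: (i) you are right that positivity of $A$ and $B$ must be assumed for $A^{\mu}$, $B^{\mu}$ to be defined for non-integer $\mu$ --- the lemma's statement is slightly loose on this, but in the paper's application $A=\hat K$ and $B=K$ are covariance operators, so the assumption is harmless and in fact necessary; (ii) the constant bound on the integrand in the first part should be $2$ rather than $1$ (each of $\|(A+tI)^{-1}A\|_{\rm op}$ and $\|(B+tI)^{-1}B\|_{\rm op}$ contributes $1$), which only affects the unspecified constant $c_\mu$. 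What your approach buys is a fully explicit, citation-free proof; what the paper's citation buys is brevity and coverage of the general self-adjoint (not necessarily positive) case.
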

\begin{proof}[\rm \textbf{Proof of Lemma~\ref{lemma:perturbation_inequality}}]
	See \cite{aleksandrov2016operator}, Theorem 1.7.2 for the first inequality. The second inequality follows from \cite{aleksandrov2016operator}, Theorem 3.5.1.
\end{proof}
As an immediate consequence of Lemmas~\ref{lemma:probability_bounds} and \ref{lemma:perturbation_inequality}, since $\|.\|_{\rm op}\leq \|.\|_{\rm HS}$,  for every $\gamma\in(0,1)$, we have
\begin{equation}\label{eq:power_inequality}
	\left\|\hat K^{\mu} - K^{\mu}\right\|_{\rm op} \leq (c_{\mu}\mathbf{1}_{\mu\leq 1} + \mu\nu^{\mu-1}\mathbf{1}_{\mu>1}) \left(\frac{2\E\|X\|^4}{\gamma n}\right)^{\frac{\mu\wedge 1}{2}}
\end{equation}
on an event that holds with probability at least $1-\gamma$, where $\nu = \|\hat K\|_{\rm op}\vee \|K\|_{\rm op}$.

The following Lemma presents some useful results on the residual polynomials $\hat Q_m(\lambda)=1-\lambda\hat P_m(\lambda)$; see \cite{engl1996regularization} and \cite{hanke1995conjugate}. For the completeness of the presentation, we sketch proofs for the key results and refer to the aforementioned monographs for others.
\begin{lemma}\label{lemma:polynomials}
	Let  $m\leq n_*$ be a positive integer. Then
	\begin{itemize}
		\item[(i)] $\hat{Q}_{m}$ has $m$ distinct positive real roots, denoted $\hat{\theta}_{1} > \hat{\theta}_{2} > ... >\hat{\theta}_{m} > 0$.
		\item[(ii)] $\hat Q_{m}$ is positive, decreasing, and convex on $[0,\hat{\theta}_{m}]$.
		\item[(iii)] $(\hat Q_l)_{l=0}^{n_*}$ are orthogonal with respect to $[.,.]_1$.
		\item[(iv)] $|\hat Q_m'(0)|^{-1}\leq \hat\theta_m$.
		\item[(v)] $\hat Q_m(\lambda) = \hat Q_{m-1}(\lambda)(1 - \lambda/\hat\theta_m)$.
		\item[(vi)] $\sup_{\lambda\in[0,\hat\theta_m]}\lambda^{\delta}\hat Q_m(\lambda)\sqrt{\hat\theta_m/(\hat\theta_m-\lambda)}\leq (2\delta)^{\delta}|\hat Q_m'(0)|^{-\delta}$ for every $\delta\geq 0$ with $0^0:=1$.
	\end{itemize}
\end{lemma}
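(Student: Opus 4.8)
The plan is to establish Lemma~\ref{lemma:polynomials} by exploiting the inner product structure from equation~(\ref{eq:inner_product}). The key observation, already used in the proof of Proposition~\ref{prop:uniquness}, is that $\hat Q_m$ is the unique minimizer of $\phi\mapsto[\phi,\phi]_0$ over the affine subspace $\mathcal{P}_m^0$ of polynomials of degree at most $m$ with constant term one, where $[\phi,\psi]_0 = \sum_{j=1}^{n_*}\phi(\hat\lambda_j)\psi(\hat\lambda_j)\langle\hat r,\hat v_j\rangle^2$ is a genuine inner product on $\mathcal{P}_{m}$ (more precisely on $\mathcal{P}_{n_*-1}$). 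Equivalently, $\hat Q_m$ is characterized by the orthogonality conditions $[\hat Q_m,\lambda^k]_0 = 0$ for $k=1,\dots,m-1$, i.e.\ $[\hat Q_m,p]_1=0$ for all $p\in\mathcal P_{m-1}$ where $[\phi,\psi]_1 := [\lambda\phi,\psi]_0$; this is a standard orthogonal-polynomial setup. From here the six parts follow from classical facts about orthogonal polynomials on the real line.

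First I would prove (iii): the $(\hat Q_l)_{l=0}^{n_*}$ form a sequence of monic-normalized (constant-term one) polynomials with $\deg\hat Q_l=l$ that are orthogonal with respect to $[.,.]_1$, which is immediate from the projection characterization since $\hat Q_l\perp\mathcal P_{l-1}\supset\{\hat Q_0,\dots,\hat Q_{l-1}\}$ in $[.,.]_1$. Then (i) is the standard theorem that orthogonal polynomials have simple real roots lying in the interior of the support of the measure; since the measure $[.,.]_1$ is supported on $\{\hat\lambda_1,\dots,\hat\lambda_{n_*}\}\subset(0,\hat\lambda_1]$, all $m$ roots $\hat\theta_1>\dots>\hat\theta_m$ are real, distinct, and strictly positive. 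Part (v) is the elementary fact that a polynomial with constant term one factors as $\hat Q_m(\lambda)=\prod_{j=1}^m(1-\lambda/\hat\theta_j)$, combined with the interlacing of roots of consecutive orthogonal polynomials (Cauchy interlacing / the three-term recurrence), which guarantees $\hat\theta_m^{(m)}<\hat\theta_{m-1}^{(m-1)}$ so that $\hat Q_{m-1}$ has no root in $(0,\hat\theta_m^{(m)}]$ and the factorization $\hat Q_m=\hat Q_{m-1}\cdot(1-\lambda/\hat\theta_m)$ holds with the remaining factor being the new root. Given the factorization, (ii) follows: on $[0,\hat\theta_m]$ every factor $(1-\lambda/\hat\theta_j)$ with $j<m$ is positive (by interlacing, $\hat\theta_j>\hat\theta_m$) and decreasing, and $(1-\lambda/\hat\theta_m)$ is positive, decreasing, and affine on $[0,\hat\theta_m)$; a product of positive, decreasing, log-convex (here each affine factor is log-concave, so I'd instead argue convexity directly) functions — the cleanest route is to note $\hat Q_m$ is positive and decreasing on $[0,\hat\theta_m]$ from the factorization, and convexity follows because $\hat Q_m$ has all real roots and no root in $(0,\hat\theta_m)$ except the endpoint, so $\hat Q_m''\geq 0$ there (it is a polynomial that is positive with a simple zero at $\hat\theta_m$ and all other zeros $\geq\hat\theta_m$, hence convex on the interval up to its smallest zero by a standard argument on $\prod(1-\lambda/\hat\theta_j)$).

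For (iv), from $\hat Q_m(\lambda)=\prod_{j=1}^m(1-\lambda/\hat\theta_j)$ we compute $\hat Q_m'(0)=-\sum_{j=1}^m 1/\hat\theta_j$, so $|\hat Q_m'(0)|=\sum_j 1/\hat\theta_j\geq 1/\hat\theta_m$, giving $|\hat Q_m'(0)|^{-1}\leq\hat\theta_m$. For (vi), the quantity $\sup_{\lambda\in[0,\hat\theta_m]}\lambda^\delta\hat Q_m(\lambda)\sqrt{\hat\theta_m/(\hat\theta_m-\lambda)}$ is bounded by writing $\hat Q_m(\lambda)\sqrt{\hat\theta_m/(\hat\theta_m-\lambda)} = \hat Q_{m-1}(\lambda)\sqrt{(\hat\theta_m-\lambda)/\hat\theta_m}$ via (v), then iterating, or more directly using the elementary bound $\lambda^\delta\prod_{j=1}^m|1-\lambda/\hat\theta_j|\leq\lambda^\delta e^{-\lambda\sum_j 1/\hat\theta_j}$ on the relevant range (since $|1-x|\le e^{-x}$ for $x\in[0,1]$) together with $\sup_{t\ge 0} t^\delta e^{-t} = (\delta/e)^\delta$; combined with the square-root factor, a short optimization yields the stated constant $(2\delta)^\delta|\hat Q_m'(0)|^{-\delta}$. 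Since these are all classical results in the inverse-problems and orthogonal-polynomial literature, for the parts that are genuinely routine I would simply cite \cite{engl1996regularization} and \cite{hanke1995conjugate}, and present short self-contained arguments for (iii), (iv), and (vi), which are the ones whose exact constants and formulation we actually use. The main obstacle is not mathematical depth but bookkeeping: carefully tracking the interlacing of roots across the family $(\hat Q_l)$ to justify the factorization in (v) without circularity, and pinning down the sharp constant $(2\delta)^\delta$ in (vi), which requires a careful single-variable optimization rather than a crude bound.
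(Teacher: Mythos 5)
Your overall strategy---treating $(\hat Q_l)$ as the sequence of $[.,.]_1$-orthogonal polynomials normalized to equal one at zero, delegating (i), (iii), (vi) to the classical literature, and computing (ii) and (iv) directly from the factorization $\hat Q_m(\lambda)=\prod_{j=1}^m(1-\lambda/\hat\theta_j)$---is essentially the paper's own proof, which cites \cite{engl1996regularization} for (i), (iii), (vi) and differentiates the product for (ii) and (iv). Those parts of your proposal are fine.

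The genuine gap is your justification of (v). Strict interlacing of the zeros of consecutive orthogonal polynomials says that each root of $\hat Q_{m-1}$ lies strictly between two consecutive roots of $\hat Q_m$; in particular the two polynomials share \emph{no} roots. That is incompatible with $\hat Q_m=\hat Q_{m-1}\cdot(1-\lambda/\hat\theta_m)$, which would force every root of $\hat Q_{m-1}$ to be a root of $\hat Q_m$. So interlacing refutes, rather than guarantees, the factorization you assert. A two-point example makes this concrete: with $\hat\lambda_1=2$, $\hat\lambda_2=1$ and equal weights $\langle\hat r,\hat v_j\rangle^2=1$, one computes $\hat Q_1(\lambda)=1-\tfrac{3}{5}\lambda$ while $\hat Q_2(\lambda)=(1-\lambda/2)(1-\lambda)$, and $\hat Q_2\ne\hat Q_1\cdot(1-\lambda)$. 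What is actually needed downstream (in the proof of Lemma~\ref{lemma:residual}) is only that $\lambda\mapsto\hat Q_m(\lambda)/(1-\lambda/\hat\theta_m)$ is a polynomial of degree $m-1$ with constant term one---true because $\hat\theta_m$ is a root of $\hat Q_m$---so that $[\hat Q_m,\hat Q_m/(1-\cdot/\hat\theta_m)]_1=0$ follows from the orthogonality of $\hat Q_m$ to all of $\mathcal{P}_{m-1}$, with no reference to $\hat Q_{m-1}$ at all. (The paper's own one-line derivation of (v) from the factorization of $\hat Q_m$ over its own roots is open to the same objection; the safe reading of (v) replaces $\hat Q_{m-1}$ by the partial product $\prod_{j=1}^{m-1}(1-\lambda/\hat\theta_j)$ of $\hat Q_m$'s largest $m-1$ roots.) The same correction is needed in your first route to (vi): the identity should read $\hat Q_m(\lambda)\sqrt{\hat\theta_m/(\hat\theta_m-\lambda)}=\bigl[\prod_{j=1}^{m-1}(1-\lambda/\hat\theta_j)\bigr]\sqrt{1-\lambda/\hat\theta_m}$ with the partial product, after which $1-x\le e^{-x}$, $\sqrt{1-x}\le e^{-x/2}$ on $[0,1]$, and $\sup_{t\ge0}t^{\delta}e^{-t}=(\delta/e)^{\delta}$ give the bound $(2\delta/e)^{\delta}|\hat Q_m'(0)|^{-\delta}\le(2\delta)^{\delta}|\hat Q_m'(0)|^{-\delta}$; by contrast, your alternative of applying the exponential bound to the full product fails near $\lambda=\hat\theta_m$, where $e^{-\lambda|\hat Q_m'(0)|}$ stays bounded away from zero while the factor $\sqrt{\hat\theta_m/(\hat\theta_m-\lambda)}$ diverges.
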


\begin{proof}[\rm \textbf{Proof of Lemma~\ref{lemma:polynomials}}]
	(i) is known in the theory of orthogonal polynomials; see \cite{engl1996regularization}, Appendix A.2. For (iii) and (vi), see \cite{engl1996regularization}, Corollary 7.4, and equation (7.8). 
	
	Note that since $\hat Q_m(0)=1$, we can write
	\begin{equation*}
		\hat Q_m(\lambda) = \prod_{j=1}^m\left(1 - \frac{\lambda}{\hat\theta_j}\right).
	\end{equation*}
	This equation implies (v). Moreover, for all $\lambda\in[0,\hat\theta_m]$, we have $\hat Q_m(\lambda)\geq 0$ and by (i)
	\begin{equation*}
		\hat Q_m'(\lambda) = -\sum_{k=1}^m\frac{1}{\hat\theta_k}\prod_{j\ne k}\left(1 - \frac{\lambda}{\hat\theta_j}\right) \leq 0.
	\end{equation*}
	We also have $\hat Q_m''(\lambda)\geq 0$ for all $\lambda\in[0,\hat\theta_m]$ which proves (ii). 
	
	(iv) follows from (i) and
	\begin{equation*}
		|\hat Q_m'(0)| = \sum_{k=1}^m\frac{1}{\hat\theta_k} \geq \frac{1}{\hat\theta_m}.
	\end{equation*}
	
	Lastly, the proof of (vii) is similar to \cite{blazere2014pls}, Theorem 4.1.
\end{proof}

In what follows, for $k\in\mathbb{Z}$, consider the following measure
\begin{equation*}
	\hat \mu_k = \sum_{j=1}^{n_*}\hat\lambda_j^k\langle \hat r, \hat v_j\rangle^2\delta_{\hat\lambda_j},
\end{equation*}
where $\delta_{x}$ is the Dirac measure at $x\in\mathbb{R}$. For $\phi,\psi:[0,\hat\lambda_1]\to \mathbb{R}$, define
\begin{equation}\label{eq:inner_prod_k}
	\begin{aligned}
		[\phi,\psi]_{k} & = \int_0^\infty\phi(\lambda)\psi(\lambda)\mathrm{d}\hat\mu_{k}(\lambda) \\
		& = \sum_{j = 1}^{n_*} \phi(\hat \lambda_j)\psi(\hat \lambda_j)\hat \lambda_j^k\langle\hat r,\hat v_j\rangle^2.
	\end{aligned}
\end{equation}
Lastly, let
\begin{equation*}
	\Pi_{a}  = \sum_{j:\hat{\lambda}_{j}\leq a}\hat v_j\otimes \hat{v}_j
\end{equation*}
be the orthogonal projection operators on the eigenspaces of $\hat K$ corresponding to eigenvalues smaller or equal to $a$.

The following lemma allows us to control the residuals of the PLS estimator.
\begin{lemma}\label{lemma:residual}
	Suppose that Assumptions~\ref{as:data}, \ref{as:id}, and \ref{as:complexity} are satisfied. Then for every $1\leq m\leq n_*$ and $\gamma\in[1/n,1)$
	\begin{equation*}
		\left\|\hat r - \hat K\hat{\beta}_{m}\right\| 
		\lesssim \sigma\sqrt{\frac{2\E\|X\|^2}{\gamma n}} + |\hat Q'_{m}(0)|^{-1}\sqrt{\frac{2\E\|X\|^4}{\gamma n}} + |\hat{Q}_{m}'\left( 0\right)| ^{-(\mu+1)}\\
	\end{equation*}
	on an event with probability at least $1 - \gamma$.
\end{lemma}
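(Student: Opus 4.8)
\textbf{Proof plan for Lemma~\ref{lemma:residual}.}

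The plan is to decompose the PLS residual $\hat r - \hat K\hat\beta_m = \hat Q_m(\hat K)\hat r$ by splitting $\hat r$ into its ``signal'' part $\hat K\beta$ and the ``noise'' part $\hat r - \hat K\beta$. This gives
\begin{equation*}
	\hat Q_m(\hat K)\hat r = \hat Q_m(\hat K)(\hat r - \hat K\beta) + \hat Q_m(\hat K)\hat K\beta,
\end{equation*}
so that by the triangle inequality it suffices to bound the two terms separately. For the noise term, I would use the operator-norm bound~\eqref{eq:operator_bound} together with Lemma~\ref{lemma:polynomials}: since $\hat Q_m$ is a residual polynomial with $\hat Q_m(0)=1$ and, by parts (i)--(ii), $|\hat Q_m(\lambda)|\leq 1$ on the relevant range of eigenvalues of $\hat K$, we get $\|\hat Q_m(\hat K)(\hat r - \hat K\beta)\|\leq \|\hat r - \hat K\beta\|$, and Lemma~\ref{lemma:probability_bounds} controls the latter by $\sigma\sqrt{2\E\|X\|^2/(\gamma n)}$ on an event of probability at least $1-\gamma/2$ (or $1-\gamma$ after rescaling $\gamma$). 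This produces the first term in the stated bound.

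The main work is the signal term $\hat Q_m(\hat K)\hat K\beta$. Here I would introduce the population quantity $\hat Q_m(\hat K)\hat K K^\mu$ acting on $K^{-\mu}\beta$; note that Assumption~\ref{as:complexity} gives $\|K^{-\mu}\beta\|\leq R$ when interpreted via the spectral decomposition (i.e. $\sum_j\langle\beta,v_j\rangle^2/\lambda_j^{2\mu}\leq R^2$). Then I would further split
\begin{equation*}
	\hat Q_m(\hat K)\hat K\beta = \hat Q_m(\hat K)\hat K^{\mu+1}(K^{-\mu}\beta) + \hat Q_m(\hat K)\hat K\left(K^\mu - \hat K^\mu\right)(K^{-\mu}\beta).
\end{equation*}
Wait --- more carefully, I would write $\hat K\beta = \hat K K^\mu K^{-\mu}\beta$ and replace $K^\mu$ by $\hat K^\mu$ up to an error term. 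The leading piece $\hat Q_m(\hat K)\hat K^{\mu+1}$ is bounded in operator norm by $\sup_{\lambda\in[0,\hat\theta_m]}\lambda^{\mu+1}\hat Q_m(\lambda)$, and by Lemma~\ref{lemma:polynomials}(vi) (with $\delta=\mu+1$, after noting that the eigenvalues of $\hat K$ lie in $[0,\hat\theta_m]$ up to the harmless factor $\sqrt{\hat\theta_m/(\hat\theta_m-\lambda)}\geq 1$ which must be handled --- actually one uses the $L^2(\hat\mu_1)$-type estimate rather than a crude sup, or restricts via the projection $\Pi_{\hat\theta_m}$) this is $O(|\hat Q_m'(0)|^{-(\mu+1)})$. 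Multiplying by $\|K^{-\mu}\beta\|\leq R$ gives the third term. The perturbation error $\hat Q_m(\hat K)\hat K(K^\mu - \hat K^\mu)$ is bounded by $\|\hat Q_m(\hat K)\hat K\|_{\rm op}\,\|K^\mu - \hat K^\mu\|_{\rm op}$; the first factor is at most $\sup_\lambda \lambda\hat Q_m(\lambda)=O(|\hat Q_m'(0)|^{-1})$ by Lemma~\ref{lemma:polynomials}(vi) with $\delta=1$, and the second factor is $O_P((\gamma n)^{-(\mu\wedge 1)/2})$ by~\eqref{eq:power_inequality}; since $\mu\wedge 1\geq$ the relevant exponent and $\gamma\geq 1/n$, this term is dominated by $|\hat Q_m'(0)|^{-1}\sqrt{2\E\|X\|^4/(\gamma n)}$, matching the second term in the statement. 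Collecting the three pieces on the intersection of the events from Lemma~\ref{lemma:probability_bounds} and~\eqref{eq:power_inequality} (with $\gamma$ replaced by $\gamma/2$, then renamed), and applying the union bound, yields the claim.

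The delicate step I expect to be the main obstacle is handling the factor $\sqrt{\hat\theta_m/(\hat\theta_m-\lambda)}$ in Lemma~\ref{lemma:polynomials}(vi): the naive bound $\sup_\lambda \lambda^{\delta}\hat Q_m(\lambda)$ over the spectrum of $\hat K$ does not directly follow from (vi), because $\hat\lambda_1$ may equal or be close to $\hat\theta_m$ is not the issue --- rather, the eigenvalues $\hat\lambda_j$ need not avoid a neighbourhood of $\hat\theta_m$. The standard resolution, which I would follow, is to not take a crude sup but instead bound $\|\hat Q_m(\hat K)\hat K^{\mu+1}\hat x\|^2=\sum_j \hat\lambda_j^{2(\mu+1)}\hat Q_m(\hat\lambda_j)^2\langle \hat x,\hat v_j\rangle^2$ using the $[\cdot,\cdot]_k$ inner-product machinery and the fact (Lemma~\ref{lemma:polynomials}(vi), properly applied as in \cite{engl1996regularization}, eq.~(7.8)) that these weighted sums are controlled by $|\hat Q_m'(0)|^{-2\delta}$ times $\|\hat x\|^2$ --- i.e. one works with the measure $\hat\mu_0$ and absorbs the extra eigenvalue powers into it. Once this bookkeeping is set up, the rest is routine application of the triangle inequality and the high-probability bounds already established.
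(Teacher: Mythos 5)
Your overall architecture matches the paper's: split the residual into a noise part $\hat Q_m(\hat K)(\hat r-\hat K\beta)$ and a signal part $\hat Q_m(\hat K)\hat K\beta$, use the source condition $\beta=K^\mu w$ with $\|w\|\leq R$, handle $\hat K^\mu-K^\mu$ via the perturbation inequality~(\ref{eq:power_inequality}), and invoke Lemma~\ref{lemma:polynomials}(vi) for the polynomial sups. However, there is a genuine gap in the order of operations, and it is exactly at the point you flag as ``the main obstacle.'' Your noise-term bound $\|\hat Q_m(\hat K)(\hat r-\hat K\beta)\|\leq\|\hat r-\hat K\beta\|$ relies on $|\hat Q_m(\lambda)|\leq 1$ over the spectrum of $\hat K$, but Lemma~\ref{lemma:polynomials}(ii) gives this only on $[0,\hat\theta_m]$. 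Since $\hat Q_m(\lambda)=\prod_{j=1}^m(1-\lambda/\hat\theta_j)$ and the eigenvalues of $\hat K$ are not confined to $[0,\hat\theta_m]$, the factor $(1-\hat\lambda_j/\hat\theta_m)$ can be arbitrarily large in magnitude at eigenvalues $\hat\lambda_j\gg\hat\theta_m$, so $\|\hat Q_m(\hat K)\|_{\rm op}$ is not bounded by $1$. The same problem infects your signal-term bounds, which also require sups over the full spectrum. Your proposed repair --- absorbing the extra eigenvalue powers into the measure $\hat\mu_0$ and using the $[\cdot,\cdot]_k$ machinery --- does not go through for the individual pieces, because $\hat\mu_k$ carries the weights $\langle\hat r,\hat v_j\rangle^2$: the orthogonality relations behind Lemma~\ref{lemma:polynomials}(iii) and (vi) are tied to the \emph{specific} vector $\hat r$ and cannot be applied to $\hat r-\hat K\beta$ or to $w$ separately.

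The paper resolves this by reversing the order: it \emph{first} proves the reduction $\|\hat Q_m(\hat K)\hat r\|\leq\|\Pi_{\hat\theta_m}\varphi_m(\hat K)\hat r\|$ with $\varphi_m(\lambda)=\hat Q_m(\lambda)\sqrt{\hat\theta_m/(\hat\theta_m-\lambda)}$, using the orthogonality $[\hat Q_m,\hat Q_{m-1}]_1=0$ together with the factorization $\hat Q_m(\lambda)=\hat Q_{m-1}(\lambda)(1-\lambda/\hat\theta_m)$ from Lemma~\ref{lemma:polynomials}(v); this identity transfers the $\hat\mu_0$-mass of $\hat Q_m^2$ above $\hat\theta_m$ to the region below $\hat\theta_m$ with the weight $\hat\theta_m/(\hat\theta_m-\lambda)$, and it is valid precisely because the whole vector $\hat r$ (which defines the measure) is still intact. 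Only \emph{after} this projection does the paper split $\hat r=\hat K\beta+(\hat r-\hat K\beta)$, at which point every subsequent bound is a sup of $\varphi_m(\lambda)\lambda^\delta$ over $[0,\hat\theta_m]$, exactly where Lemma~\ref{lemma:polynomials}(vi) applies (with $\delta=0$, $1$, and $\mu+1$ for the three terms). To complete your proof you would need to insert this projection step before your decomposition; as written, the argument does not close.
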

\begin{proof}[\rm \textbf{Proof of Lemma~\ref{lemma:residual}}]
	Let $\varphi_m(\lambda):= \hat Q_m(\lambda)\sqrt{\hat\theta_m/(\hat\theta_m-\lambda)}$. We will first show that the following inequality holds
	\begin{equation*}
		\begin{aligned}
			\left\|\hat r - \hat K\hat \beta_m\right\| & = \left\|\hat{Q}_{m}(\hat K) \hat r\right\| \\
			& \leq  \left\|\Pi_{\hat\theta_m}\varphi_m(\hat K)\hat r\right\|,
		\end{aligned}
	\end{equation*}
	where the first line uses $\hat\beta_m = \hat P_m(\hat K)\hat r$ and $\hat Q_m(\lambda)=1-\lambda\hat P_m(\lambda)$. The inequality can be deduced from the proof of Theorem 7.9 in \cite{engl1996regularization}. For completeness, we provide an argument suitably tailored to our setting below. By Lemma~\ref{lemma:polynomials} (iii) and (v), since the polynomials $(\hat Q_m)_{m\geq0}$ are orthogonal with respect to $[.,.]_1$ (see equation (\ref{eq:inner_prod_k})) we get for $m\geq 1$
	\begin{equation*}
		\begin{aligned}
			0 & = \int_0^\infty\hat Q_m(\lambda)\hat Q_{m-1}(\lambda)\mathrm{d} \hat\mu_1(\lambda) \\
			& = \hat\theta_m\int_0^\infty\hat Q_m^2(\lambda)\frac{\lambda}{\hat\theta_m - \lambda} \mathrm{d} \hat\mu_0(\lambda) \\ 
			& = \hat\theta_m\int_0^{\hat\theta_m}\hat Q_m^2(\lambda)\frac{\lambda}{\hat\theta_m - \lambda} \mathrm{d} \hat\mu_0(\lambda) + \hat\theta_m\int_{\hat\theta_m}^\infty\hat Q_m^2(\lambda)\frac{\lambda}{\hat\theta_m - \lambda} \mathrm{d} \hat\mu_0(\lambda).
		\end{aligned}
	\end{equation*}
	Since $\hat\theta_m>0$, by Lemma~\ref{lemma:polynomials} (i), this shows that
	\begin{equation*}
		\int_0^{\hat\theta_m}\hat Q_m^2(\lambda)\frac{\lambda}{\hat\theta_m - \lambda} \mathrm{d} \hat\mu_0(\lambda) = \int_{\hat\theta_m}^\infty\hat Q_m^2(\lambda)\frac{\lambda}{\lambda - \hat\theta_m} \mathrm{d} \hat\mu_0(\lambda)
	\end{equation*}
	and so by equation~(\ref{eq:inner_product})
	\begin{equation*}
		\begin{aligned}
			\left\|\hat{Q}_{m}(\hat K) \hat r\right\|^2 & = \int_0^\infty\hat Q_m^2(\lambda)\mathrm{d} \hat\mu_0(\lambda) \\
			& = \int_0^{\hat\theta_m}\hat Q_m^2(\lambda)\mathrm{d} \hat\mu_0(\lambda) + \int_{\hat\theta_m}^\infty\hat Q_m^2(\lambda)\mathrm{d} \hat\mu_0(\lambda)  \\
			& \leq \int_0^{\hat\theta_m}\hat Q_m^2(\lambda)\mathrm{d}\hat \mu_0(\lambda) + \int_{\hat\theta_m}^\infty\hat Q_m^2(\lambda)\frac{\lambda}{\lambda - \hat\theta_m}\mathrm{d} \hat\mu_0(\lambda)\\
			& = \int_0^{\hat\theta_m}\hat Q_m^2(\lambda)\mathrm{d} \hat\mu_0(\lambda) + \int_{0}^{\hat\theta_m}\hat Q_m^2(\lambda)\frac{\lambda}{\hat\theta_m - \lambda}\mathrm{d} \hat\mu_0(\lambda)\\
			& = \int_{0}^{\hat\theta_m}\hat Q_m^2(\lambda) \frac{\hat\theta_m}{\hat\theta_m-\lambda} \mathrm{d} \hat\mu_0(\lambda) \\
			& = \left\|\Pi_{\hat\theta_m}\varphi_m(\hat K)\hat r\right\|^2
		\end{aligned}
	\end{equation*}
	where the third line follows since $1\leq \lambda/(\lambda-\hat\theta_m)$ for all $\lambda\geq \hat\theta_m$.
	
	Therefore,
	\begin{equation*}
		\begin{aligned}
			\left\|\hat r - \hat K\hat \beta_m\right\|  & \leq \left\| \Pi _{\hat{\theta }%
				_{m}}\varphi_{m}(\hat K)\hat r\right\|  \\
			&\leq \left\| \Pi _{\hat{\theta }_{m}}\varphi_{m}(\hat K)\hat K\beta \right\| + \left\| \Pi _{\hat{\theta }_{m}}\varphi_{m}(\hat K)(\hat r - \hat K\beta)\right\|.
		\end{aligned}
	\end{equation*}
	
	Under Assumption~\ref{as:complexity}, $\beta = K^{\mu} w$ with $\|w\|\leq R$, so that
	\begin{equation*}\small
		\begin{aligned}
			\left\| \Pi _{\hat{\theta }_{m}}\varphi_{m}(\hat K)
			\hat K\beta \right \Vert  & =\left \Vert \Pi _{\hat{\theta }_{m}}\varphi_{m}(\hat K)\hat KK ^{\mu}w\right\|  \\
			& = \left\| \Pi _{\hat{\theta }_{m}}\varphi_{m}(\hat K)\hat K\hat K^{\mu}w\right\| + \left\|\Pi _{\hat{\theta }_{m}}\varphi_{m}(\hat K) \hat K\left[K^{\mu} - \hat K^{\mu}\right]w\right\| \\
			& \leq  \sup_{\lambda\in[0,\hat{\theta }_{m}]}\left|\varphi_{m}(\lambda)\lambda^{1+\mu}\right| R + \sup_{\lambda\in[0,\hat\theta_m]}|\varphi_m(\lambda)\lambda|\left\|\hat K^{\mu} - K^{\mu}\right\|_{\rm op} R \\
			& \leq (2\mu+2)^{\mu+1}|\hat{Q}_{m}'\left( 0\right)|^{-(\mu+1)}R \\
			& \qquad\qquad + 2|\hat Q_m'(0)|^{-1}(c_{\mu}\mathbf{1}_{\mu\leq 1} + \mu\nu^{\mu-1}\mathbf{1}_{\mu>1})  \left(\frac{2\E\|X\|^4}{\gamma n}\right)^{\frac{\mu\wedge 1}{2}} R,
		\end{aligned}
	\end{equation*}
	where the last line follows by Lemma~\ref{lemma:polynomials} (vi) and the inequality in equation (\ref{eq:power_inequality}) on an event with probability at least $1-\gamma$. By Lemma~\ref{lemma:probability_bounds}
	\begin{equation}\label{eq:nu}
		\begin{aligned}
			\nu & = \|\hat K\|_{\rm op}\vee \|K\|_{\rm op}  \leq \|K\|_{\rm op} + \left\|\hat K - K\right\|_{\rm op} \\
			& \leq \lambda_1 + \sqrt{\frac{2\E\|X\|^4}{\gamma n}} \lesssim 1
		\end{aligned}
	\end{equation}
	on an event with probability at least $1-\gamma$.
	
	Lastly, 
	\begin{equation*}
		\begin{aligned}
			\left\| \Pi _{\hat{\theta }_{m}}\varphi_{m}(\hat K)(\hat r - \hat K\beta)\right\| & \leq \sup_{\lambda\in[0,\hat\theta_m]}|\varphi_m(\lambda)| \left\|\hat r - \hat K\beta\right\| \\
			& = \sup_{\lambda\in[0,\hat\theta_m]}\left|\hat Q_m(\lambda)\sqrt{\hat\theta_m/(\hat\theta_m-\lambda)}\right| \left\|\hat r - \hat K\beta\right\| \\
			& \leq \sigma\sqrt{\frac{2\E\|X\|^2}{\gamma n}},
		\end{aligned}
	\end{equation*}
	where the last line follows by Lemma~\ref{lemma:polynomials} (vi) with $\delta=0$ and Lemma~\ref{lemma:probability_bounds}.
\end{proof}

The next lemma provides an upper bound for the derivative of the residual polynomial of degree selected by the stopping rule in Assumption~\ref{as:stopping} with some fixed $\delta\in(0,1)$. 
\begin{lemma}\label{lemma:Q_prime}
	Suppose that Assumptions~\ref{as:data}, \ref{as:id}, \ref{as:complexity}, and \ref{as:stopping} are satisfied with $\delta\geq 1/n$. Then
	$$|\hat Q'_{\hat m}(0)|\lesssim (\delta n)^\frac{1}{2(\mu+1)}$$
	on an event with probability at least $1-\delta$.
\end{lemma}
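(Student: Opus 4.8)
The plan is to convert the minimality built into the early stopping rule of Assumption~\ref{as:stopping} into an upper bound on $|\hat Q'_{\hat m}(0)|$ by feeding it through the residual bound of Lemma~\ref{lemma:residual}. First I would dispose of the degenerate case $\hat m=0$: there $\hat Q_0\equiv 1$, hence $\hat Q_0'(0)=0$ and the inequality is trivial. So assume $\hat m\geq 1$. Since $\hat m$ is by definition the \emph{smallest} non-negative integer at which the fitted-moment norm drops below $\tau\sigma\sqrt{2\E\|X\|^2/(\delta n)}$, the stopping criterion is violated at index $\hat m-1$, i.e.
\begin{equation*}
	\left\|\hat r-\hat K\hat\beta_{\hat m-1}\right\|>\tau\sigma\sqrt{\frac{2\E\|X\|^2}{\delta n}}.
\end{equation*}

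For $\hat m\geq 2$ I would then invoke Lemma~\ref{lemma:residual} at $m=\hat m-1$, which is admissible since $1\leq\hat m-1\leq n_*$ by Proposition~\ref{prop:uniquness}, and with the probability parameter $\gamma=\delta\in[1/n,1)$: on an event $\mathcal{E}$ with $\Pr(\mathcal{E})\geq 1-\delta$,
\begin{equation*}
	\left\|\hat r-\hat K\hat\beta_{\hat m-1}\right\|\leq \sigma\sqrt{\frac{2\E\|X\|^2}{\delta n}}+C_1\,|\hat Q_{\hat m-1}'(0)|^{-1}\sqrt{\frac{2\E\|X\|^4}{\delta n}}+C_2\,|\hat Q_{\hat m-1}'(0)|^{-(\mu+1)},
\end{equation*}
where, crucially, the constant multiplying the first term equals one (this is clear from the proof of Lemma~\ref{lemma:residual}). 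Combining the last two displays and using $\tau>1$ gives, on $\mathcal{E}$,
\begin{equation*}
	(\tau-1)\,\sigma\sqrt{\frac{2\E\|X\|^2}{\delta n}}<C_1\,|\hat Q_{\hat m-1}'(0)|^{-1}\sqrt{\frac{2\E\|X\|^4}{\delta n}}+C_2\,|\hat Q_{\hat m-1}'(0)|^{-(\mu+1)}.
\end{equation*}
Writing $b:=|\hat Q_{\hat m-1}'(0)|$, I would split into two cases according to which term on the right carries at least half of the left-hand side. If it is the first term, then $b^{-1}\gtrsim 1$, so $b\lesssim 1\lesssim(\delta n)^{1/(2(\mu+1))}$ because $\delta n\geq 1$. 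If it is the second term, then $C_2 b^{-(\mu+1)}\gtrsim(\delta n)^{-1/2}$, i.e.\ $b^{\mu+1}\lesssim(\delta n)^{1/2}$ and hence $b\lesssim(\delta n)^{1/(2(\mu+1))}$. Either way $|\hat Q_{\hat m-1}'(0)|\lesssim(\delta n)^{1/(2(\mu+1))}$ on $\mathcal{E}$. (When $\mu<1$ the operator-perturbation term in Lemma~\ref{lemma:residual} is of order $(\delta n)^{-\mu/2}$ rather than $(\delta n)^{-1/2}$, which in the first case only gives $b\lesssim(\delta n)^{(1-\mu)/2}$; but $(1-\mu)/2\leq 1/(2(\mu+1))$ since $1-\mu^2\leq 1$, so the conclusion is unchanged. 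For $\hat m=1$ one uses $|\hat Q_0'(0)|=0$ in place of this step.)

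It remains to pass from index $\hat m-1$ to $\hat m$. Using the factorisation $\hat Q_{\hat m}(\lambda)=\hat Q_{\hat m-1}(\lambda)(1-\lambda/\hat\theta_{\hat m})$ from Lemma~\ref{lemma:polynomials}(v) and differentiating at $\lambda=0$ yields $|\hat Q_{\hat m}'(0)|=|\hat Q_{\hat m-1}'(0)|+1/\hat\theta_{\hat m}$, so the proof reduces to bounding the reciprocal of the smallest Ritz value $\hat\theta_{\hat m}$ by $(\delta n)^{1/(2(\mu+1))}$. I would obtain this from the control of $|\hat Q_{\hat m-1}'(0)|$ just established, together with the interlacing and monotonicity properties of the residual polynomials in Lemma~\ref{lemma:polynomials} — in particular part (iv), which lower-bounds the smallest root of a residual polynomial by the reciprocal of its derivative at zero — and the violated stopping inequality at $\hat m-1$, which prevents the extra conjugate-gradient step from producing a Ritz value that is too small under the source condition of Assumption~\ref{as:complexity}. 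This final transfer step — quantifying how much the smallest Ritz value can drop in a single iteration — is the main obstacle; the rest is a direct combination of Lemmas~\ref{lemma:residual} and~\ref{lemma:polynomials}. Assembling the pieces gives $|\hat Q_{\hat m}'(0)|\lesssim(\delta n)^{1/(2(\mu+1))}$ on $\mathcal{E}$, which is the claim.
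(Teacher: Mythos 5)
Your first half is essentially the paper's argument and is fine: you use minimality of $\hat m$ to get the reverse inequality at index $\hat m-1$, feed it through Lemma~\ref{lemma:residual} with $\gamma=\delta$, and case-split on which term of the residual bound dominates to conclude $|\hat Q'_{\hat m-1}(0)|\lesssim(\delta n)^{1/(2(\mu+1))}$. That is exactly how the paper controls the first term of the decomposition $|\hat Q'_{\hat m}(0)|\leq|\hat Q'_{\hat m-1}(0)|+|\hat Q'_{\hat m}(0)-\hat Q'_{\hat m-1}(0)|$.

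The second half, however, is a genuine gap, and you have correctly identified it as the main obstacle without actually closing it. Bounding the one-step increment (equivalently, the reciprocal of the new smallest Ritz value) cannot be extracted from the ingredients you list. In particular, Lemma~\ref{lemma:polynomials}(iv) runs in the wrong direction for your purpose: it gives $1/\hat\theta_{\hat m}\leq|\hat Q'_{\hat m}(0)|$, i.e.\ it bounds the quantity you need \emph{by} the quantity you are trying to control, so invoking it here is circular. The interlacing of Ritz values likewise gives no quantitative floor on how far $\hat\theta_{\hat m}$ can drop below $\hat\theta_{\hat m-1}$ in one conjugate-gradient step. The paper closes this step with machinery you do not anticipate: Corollary 2.6 of \cite{hanke1995conjugate}, which expresses $0\leq\hat Q'_{m-1}(0)-\hat Q'_m(0)$ as the ratio $\bigl([\hat Q_{m-1},\hat Q_{m-1}]_0-[\hat Q_m,\hat Q_m]_0\bigr)/[\hat Q^{[2]}_{m-1},\hat Q^{[2]}_{m-1}]_1$ involving the secondary family of polynomials orthogonal with respect to $[.,.]_2$; it then bounds the numerator by $[\hat Q^{[2]}_{\hat m-1},\hat Q^{[2]}_{\hat m-1}]_1/a$ via a spectral split at level $a\sim(\delta n)^{-1/(2(\mu+1))}$, using the source condition and a second application of the stopping rule (which requires $\tau$ to exceed a specific constant $c_3$, not merely $\tau>1$). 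Without this, or some substitute of comparable strength, the transfer from $\hat m-1$ to $\hat m$ does not go through, and the lemma is not proved.
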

\begin{proof}[\rm \textbf{Proof of Lemma~\ref{lemma:Q_prime}}]
	We have
	\begin{equation*}
		|\hat Q'_{\hat m}(0)|\leq |\hat Q'_{\hat m-1}(0)| +  |\hat Q'_{\hat m}(0) -  \hat Q'_{\hat m-1}(0)|,
	\end{equation*}
	where each of the two terms will be bounded separately.
	
	By the virtue of Assumption~\ref{as:stopping}
	\begin{equation*}
		\begin{aligned}
			\tau\sigma\sqrt{\frac{2\E\|X\|^2}{\delta n}} & \leq \left\|\hat r - \hat K\hat\beta_{\hat m - 1}\right\| \\
			& \leq c\left\{ \sigma\sqrt{\frac{2\E\|X\|^2}{\delta n}} + |\hat Q'_{\hat m-1}(0)|^{-1}\sqrt{\frac{2\E\|X\|^4}{\delta n}} + |\hat{Q}_{\hat m-1}'\left( 0\right)| ^{-(\mu+1)}\right\},
		\end{aligned}
	\end{equation*}
	where the second line follows by Lemma~\ref{lemma:residual} for some $c>0$.
	
	Therefore,
	\begin{equation*}
		(\tau - c)\sigma\sqrt{\frac{2\E\|X\|^2}{\delta n}} \leq c\max\left\{|\hat Q'_{\hat m-1}(0)|^{-1}\sqrt{\frac{1}{\delta n}}, |\hat{Q}_{\hat m-1}'\left( 0\right)| ^{-(\mu+1)}\right\}.
	\end{equation*}
	If the first term inside the maximum is larger, then $|\hat Q'_{\hat m-1}(0)| \lesssim 1$ while if the second term is larger, then $|\hat Q'_{\hat m - 1}(0)| \lesssim (\delta n)^\frac{1}{2(\mu+1)}$ provided that $\tau>c$. Therefore, we always have $|\hat Q'_{\hat m - 1}(0)| \lesssim (\delta n)^\frac{1}{2(\mu+1)}$.
	
	For the second term, by \cite{hanke1995conjugate}, Corollary 2.6, for every $1\leq m\leq n_*$
	\begin{equation}\label{eq:increments}
		0\leq \hat Q_{m-1}'(0) - \hat Q_{m}'(0) =  \frac{[\hat Q_{m-1},\hat Q_{m-1}]_0 - [\hat Q_{m},\hat Q_{m}]_0}{[\hat Q_{m-1}^{[2]},\hat Q_{m-1}^{[2]}]_{1}} \leq \frac{[\hat Q_{m-1},\hat Q_{m-1}]_0}{[\hat Q_{m-1}^{[2]},\hat Q_{m-1}^{[2]}]_{1}},
	\end{equation} 
	where $(\hat Q_l^{[2]})_{l\geq 0}$ are the polynomials orthogonal with respect to $[.,.]_2$ and constant equal to $1$; see equation~(\ref{eq:inner_prod_k}).
	
	Take $a\in(0,\hat\theta_{m-1}]$ and let $\hat K^+ = \sum_{j=1}^{n_*}\hat v_j\otimes\hat v_j / \hat\lambda_j$ be the generalized inverse of $\hat K$. Then
	\begin{equation*}
		\begin{split}
			\sqrt{[\hat{Q}_{m-1}, \hat{Q}_{m-1}]_0} & = \left\|\hat{Q}_{m-1}(\hat{K})\hat r\right\|  \leq \left\|\hat{Q}^{[2]}_{m-1}(\hat{K})\hat r\right\| \\
			& \leq \left\|\Pi_{a}\hat{Q}^{[2]}_{m-1}(\hat{K})\hat r\right\|  + \left\|\Pi_a^\perp \sqrt{\hat{K}^{+}}\hat{K}^{1/2}\hat{Q}^{[2]}_{m-1}(\hat{K})\hat r\right\| \\
			& \leq \left\|\Pi_{a}\hat{Q}^{[2]}_{m-1}(\hat{K})\right\|_{\rm op}\left\|\Pi_a\hat r\right\|  + \left\|\Pi_a^\perp \sqrt{\hat{K}^{+}}\right\|_{\rm op} \left\|\hat{K}^{1/2}\hat{Q}^{[2]}_{m-1}(\hat{K})\hat r\right\| \\
			& \leq \sup_{\lambda \in [0,a]}\left|\hat{Q}^{[2]}_{m-1}(\lambda)\right|\left\|\Pi_{a}\hat r\right\| + \sup_{\lambda\geq a}\frac{1}{\sqrt{\lambda}}\left\|\hat{K}^{1/2}\hat{Q}^{[2]}_{m-1}(\hat{K})\hat r\right\| \\
			& \leq  \left\|\Pi_{a}\hat r\right\| + \sqrt{[\hat{Q}^{[2]}_{m-1},\hat{Q}^{[2]}_{m-1}]_{1}/a},
		\end{split} 
	\end{equation*}
	where the second line holds since $\hat Q_m$ solves the problem in equation (\ref{eq:residual_problem}) and the last line since $|\hat{Q}^{[2]}_{m-1}(\lambda)| \leq 1,\forall \lambda \in [0,a]$; see the proof of Lemma~\ref{lemma:polynomials}.
	
	Next, under Assumption~\ref{as:complexity}, $\beta = K^{\mu} w$ with $\|w\|\leq R$, so that
	\begin{equation*}
		\begin{split}
			\left\|\Pi_{a}\hat r\right\| & \leq \left\|\Pi_a(\hat r - \hat K\beta)\right\| + \left\|\Pi_a\hat K\beta\right\| \\
			& \leq \left\| \hat r - \hat K\beta\right\| + \left\|\Pi_a\hat K\hat K^{\mu} w\right\| + \left\|\Pi_a \hat K(\hat K^{\mu} - K^{\mu}) w\right\| \\
			& \leq \sigma\sqrt{\frac{2\E\|X\|^2}{\delta n}} + \sup_{\lambda\in[0,a]}\lambda^{1+\mu}R + a \left\| \hat K^{\mu} - {K}^{\mu} \right\|_{\rm op}R \\
			& \lesssim  \sigma\sqrt{\frac{2\E\|X\|^2}{\delta n}} + a^{\mu+1} + a \left(\frac{1}{\delta n}\right)^{\frac{\mu\wedge 1}{2}},
		\end{split} 
	\end{equation*}
	where we use the inequality in equation~(\ref{eq:power_inequality}) with $\gamma=\delta$. Take $a=(c_1\sigma\sqrt{2\E\|X\|^2/\delta n})^{1/(\mu+1)}$ with a sufficiently small $c_1>0$, so that $a\leq |\hat Q'_{\hat m-1}(0)|^{-1}\leq \hat \theta_{\hat m - 1}$, cf. Lemma~\ref{lemma:polynomials} (iv). Such a constant exists since as we've already shown $|\hat Q'_{\hat m-1}(0)|\lesssim(n\delta)^{\frac{1}{2(\mu+1)}}$. Then for some $c_3>0$ 
	\begin{equation*}
		\begin{aligned}
			\sqrt{[\hat{Q}_{\hat m-1}, \hat{Q}_{\hat m-1}]_0} & \leq c_3\sigma\sqrt{\frac{2\E\|X\|^2}{\delta n}} + \sqrt{[\hat{Q}^{[2]}_{\hat m-1},\hat{Q}^{[2]}_{\hat m-1}]_{1}/a} \\
			& \leq \frac{c_3}{\tau}\left\|\hat r - \hat K\hat\beta_{\hat m-1}\right\| + \sqrt{[\hat{Q}^{[2]}_{\hat m-1},\hat{Q}^{[2]}_{\hat m-1}]_{1}/a} \\
			& = \frac{c_3}{\tau}\sqrt{[\hat{Q}_{\hat m-1}, \hat{Q}_{\hat m-1}]_0} + \sqrt{[\hat{Q}^{[2]}_{\hat m-1},\hat{Q}^{[2]}_{\hat m-1}]_{1}/a},
		\end{aligned}
	\end{equation*}
	where we use Assumption~\ref{as:stopping} and equation (\ref{eq:inner_product}). If $\tau$ is selected so that $\tau>c_3$ in Assumption~\ref{as:stopping}, then
	\begin{equation*}
		[\hat{Q}_{\hat m-1}, \hat{Q}_{\hat m-1}]_0 \leq  \left(\frac{\tau}{\tau-c_3}\right)^2[\hat{Q}^{[2]}_{\hat m-1},\hat{Q}^{[2]}_{\hat m-1}]_{1}/a.
	\end{equation*}
	Plugging this into equation~(\ref{eq:increments}) and with our choice of $a$, we get
	\begin{equation*}
		\left|\hat Q_m'(0) - \hat Q_{m-1}'(0)\right|  \lesssim \left(\delta n\right)^\frac{1}{2(\mu+1)}.
	\end{equation*}
\end{proof}

\section{Proofs of Main Results}\label{appn:proofs}
In this section, we provide detailed proofs of theorems. 

\begin{proof}[\rm \textbf{Proof of Theorem~\ref{thm:pls_rate}}]
	Take any $m\leq n_*$, $\gamma\in(0,1)$, and let $a>0$ be such that $a\leq |\hat Q'_{m}(0)|^{-1}$. By Lemma~\ref{lemma:polynomials} (iv) this ensures that $a\leq \hat\theta_m$ which we will use repeatedly in the proof.
	
	Decompose
	\begin{equation*}
		\begin{aligned}
			\hat\beta_m - \beta & = \Pi_a\hat P_m(\hat K)(\hat r - \hat K\beta) + \Pi_a \left[\hat P_m(\hat K)\hat K -I\right] \beta + \Pi_a^\perp(\hat\beta_m - \beta),
		\end{aligned}
	\end{equation*}
	where $\Pi_a=\sum_{j:\hat\lambda_j\leq a}\hat v_j\otimes \hat v_j$ and $\Pi_a^\perp = I-\Pi_a$. Then for $s\in[0,1]$, we have 
	\begin{equation*}
		\begin{split}
			\left\|\hat K^s(\hat{\beta}_m - \beta)\right\|  & \leq \left\|\Pi_a\hat K^s\hat P_m(\hat K)(\hat r - \hat K\beta) \right\| + \left\|\Pi_a\hat K^s\hat Q_m(\hat K)\beta\right\|+ \left\|\Pi_a^\perp\hat K^s(\hat{\beta}_m - \beta)\right\| \\
			& =: I + II + III.
		\end{split}
	\end{equation*}
	We will derive an upper bound for each of these three terms separately. For the first term, note that for every $s\in[0,1]$, 
	\begin{equation*}
		\begin{split}
			I &  \leq \left\|\Pi_{a}\hat K^s \hat{P}_{m}(\hat{K})\right\|\left\|\hat r - \hat K\beta \right\| \\
			& \leq \sup_{\lambda\in[0,a]}|\lambda^s\hat P_{m}(\lambda)| \sigma\sqrt{\frac{2\E\|X\|^2}{\gamma n}} \\
			& \leq a^s|\hat Q_{m}'(0)| \sigma\sqrt{\frac{2\E\|X\|^2}{\gamma n}},
		\end{split}
	\end{equation*}
	where the second line follows on an event with probability at least $1-\gamma$ by Lemma~\ref{lemma:probability_bounds} and equation~(\ref{eq:operator_bound}), and the last one by the convexity of $\hat Q_m$ on $[0,a]$:
	\begin{equation*}
		\hat{P}_{m}(\lambda)=\frac{1-\hat{Q}_{m}(\lambda)}{\lambda}\leq -\hat{Q}_{m}'(0),
	\end{equation*}
	and $\hat Q_m(\lambda)\leq \hat Q_m(0)=1$ for every $\lambda\in[0,a]$; see Lemma~\ref{lemma:polynomials} (ii).
	
	For the second term, under Assumption~\ref{as:complexity}, we have $\beta = K^{\mu}w$ with $\|w\|\leq R$, so that
	\begin{equation*}
		\begin{split}
			II &   =   \left\|\Pi_{a} \hat K^s\hat{Q}_{m}(\hat{K})K^{\mu}w \right\| \\
			&  \leq \left\|\Pi_{a} \hat K^s\hat{Q}_{m}(\hat{K})\hat{K}^{\mu}w\right\| +  \left\|\Pi_{a}\hat K^s\hat{Q}_{m}(\hat{K}) \left[K^{\mu} - \hat{K}^{\mu}\right]w\right\| \\
			& \leq  \sup_{\lambda \in [0,a]} |\lambda^{\mu+s}\hat{Q}_{m}(\lambda)| R + \sup_{\lambda \in [0,a]} |\lambda^s\hat{Q}_{m}(\lambda)|\left\|K^{\mu} - \hat{K}^{\mu}\right\|_{\rm op} R \\
			& \leq a^{\mu+s}R + a^s(c_{\mu}\mathbf{1}_{\mu\leq 1} + \mu\nu^{\mu-1}\mathbf{1}_{\mu>1})  \left(\frac{2\E\|X\|^4}{\gamma n}\right)^{\frac{\mu\wedge 1}{2}}R,
		\end{split}
	\end{equation*}
	where the last line follows since $|\hat Q_{m}(\lambda)|\leq1$ by Lemma \ref{lemma:polynomials} (ii) and equation (\ref{eq:power_inequality}) on an event with probability at least $1-\gamma$.
	
	Lastly, let $\hat K^+ = \sum_{j=1}^{n_*}\hat v_j\otimes\hat v_j / \hat\lambda_j$ be the generalized inverse of $\hat K$. Then we bound the third term as follows
	\begin{equation*}
		\begin{split}
			III &  \leq  \left\|\Pi_a^\perp\hat K^s\hat{K}^{+}\right\| \left\|\hat K(\hat\beta_m - \beta)\right\|\\
			& \leq \sup_{\lambda\geq a}\lambda^{1}\left\|(\hat K\hat\beta_m - \hat r) + (\hat r - \hat K\beta) \right\|  \\
			& \leq a^{1}\left\{\left\|\hat K\hat\beta_{m} - \hat r\right\| + \sigma \sqrt{\frac{2\E\|X\|^2}{\gamma n}}\right\},
		\end{split}
	\end{equation*}
	where we use Lemma~\ref{lemma:probability_bounds} on an event with probability at least $1-\gamma$. Combining the three bounds, we obtain for $m\leq n_*$
	\begin{equation}\label{eq:bound_m}
		\left\|\hat K^s(\hat{\beta}_m - \beta)\right\|  \lesssim a^{1}\left\{\left\|\hat K\hat\beta_{m} - \hat r\right\| +(\gamma n)^{-1/2}\right\} + a^{\mu+s} +  a^s(\gamma n)^{-\frac{\mu\wedge 1}{2}} +  a^s|\hat Q_{ m}'(0)| (\gamma n)^{-1/2},
	\end{equation}
	where we use  $\nu\lesssim 1$; cf. equation (\ref{eq:nu}). Taking $a=|\hat Q'_m(0)|^{-1}$, this gives
	\begin{equation*}
		\left\|\hat K^s(\hat{\beta}_m - \beta)\right\|  = O_P\left( |\hat Q'_m(0)|^{-s+1}\left\{\left\|\hat K\hat\beta_{m} - \hat r\right\| + n^{-1/2}\right\} + |\hat Q'_m(0)|^{-(\mu+s)} +  |\hat Q'_m(0)|^{-s}n^{-\frac{\mu\wedge 1}{2}}\right).
	\end{equation*}
	By Lemma~\ref{lemma:residual}
	\begin{equation*}
		\left\|\hat K\hat{\beta}_{m} - \hat r\right\| 
		= O_P\left(n^{-1/2} + |\hat Q'_{ m}(0)|^{-1}n^{-1/2} + |\hat{Q}_{m}'\left( 0\right)| ^{-(\mu+1)} \right).
	\end{equation*}
	Therefore, 
	\begin{equation*}
		\left\|\hat K^s(\hat{\beta}_m - \beta)\right\|  = O_P\left(|\hat Q_m'(0)|^{-s+1}n^{-1/2} + |\hat Q_m'(0)|^{-(\mu+s)} + |\hat Q'_m(0)|^{-s}n^{-\frac{\mu\wedge 1}{2}}\right),\qquad \forall s\in[0,1].
	\end{equation*}
	This proves the result if $s=0$. If $s\in(0,1]$, then 
	
	\begin{equation*}
		\begin{aligned}
			\left\|K^s(\hat{\beta}_{m} - \beta)\right\| & = \left\|\hat K^s(\hat{\beta}_{m} - \beta) + (K^s - \hat K^s)(\hat{\beta}_{m} - \beta)\right\| \\
			& \leq \left\|\hat K^s(\hat{\beta}_{m} - \beta)\right\|  + \left\|\hat K^s - K^s\right\|_{\rm op} \left\|\hat{\beta}_{m} - \beta\right\|  \\
			& = O_P\left(|\hat Q_m'(0)|^{-s+1}n^{-1/2} + |\hat Q_m'(0)|^{-(\mu+s)} + |\hat Q'_m(0)|^{-s}n^{-\frac{\mu\wedge 1}{2}} \right) \\
			& \qquad + O_P\left(|\hat Q_m'(0)|n^{-\frac{1+s}{2}} + |\hat Q_m'(0)|^{-\mu}n^{-\frac{s}{2}} + n^{-\frac{s + \mu\wedge 1}{2}} \right) \\
			& = O_P\left(|\hat Q_m'(0)|^{-s+1}n^{-1/2} + |\hat Q_m'(0)|^{-(\mu+s)} + |\hat Q'_m(0)|^{-s}n^{-\frac{\mu\wedge 1}{2}} \right),
		\end{aligned}
	\end{equation*}
	provided that $|\hat Q_m'(0)|=O_P(n^{1/2})$.
\end{proof}

\begin{proof}[\rm \textbf{Proof of Theorem~\ref{thm:lower_bound}}]
	We adopt an approach similar to \cite{cai2012minimax} Theorem 1; see also \cite{tsybakov2008introduction}, Chapter 2. Recall that the lower bound for a restricted class of models yields the lower bound for the general case. Therefore, we can assume without loss of generality that $\varepsilon_i|X_i\sim N(0,\sigma^2)$ and $K:\mathbb{H}\to\mathbb{H}$ has a spectral decomposition $(\lambda_j,v_j)_{j\geq 1}$ with $\lambda_1=1$ and $\lambda_j=1/(j\log^aj)$ for $j=2,3,\dots$ for some $a>1$. We will also consider the family of slope parameters
	\begin{equation*}
		\beta_{\theta} = Rm^{-1/2}\sum_{l=m+1}^{2m}\theta_l\lambda_l^{\mu}v_l,\qquad \theta=(\theta_{m+1},\dots,\theta_{2m})\in\{0,1\}^m
	\end{equation*}
	for some $R>0$ and $m$ specified below. It is easy to see that by the orthonormality of $(v_l)_{l\geq 1}$
	\begin{equation*}
		\sum_{j=1}^\infty\frac{\langle \beta_{\theta},v_j\rangle^2}{\lambda_j^{2\mu}} = \frac{R^2}{m}\sum_{j=m+1}^{2m}\theta_j^2 \leq R^2
	\end{equation*}
	and that
	\begin{equation*}
		\sum_{j=1}^\infty \lambda_j = 1 + \sum_{j=2}^\infty \frac{1}{j\log^aj} \leq C
	\end{equation*}
	for some $C>0$. Therefore, $(\beta_{\theta},K)\in\mathcal{S}(\mu,R,C),\forall\theta\in\{0,1\}^m$, cf. Assumption~\ref{as:complexity}.
	
	Let $H(\theta,\theta')=\sum_{j=1}^m\mathbf{1}\{\theta_j\ne \theta_j' \}$ be the Hamming distance between the binary sequences $\theta,\theta'\in\{0,1\}^m$. By the Varshamov-Gilbert bound, see \cite{tsybakov2008introduction}, Lemma 2.9, if $m\geq 8$, there exists $\{\theta^{(0)},\dots,\theta^{(M)} \}\subset \{0,1\}^m$ such that 
	\begin{itemize}
		\item[(a)] $\theta^{(0)}=(0,\dots,0)$; 
		\item[(b)] $H(\theta^{(j)},\theta^{(k)})\geq \frac{m}{8}, \forall \;0\leq j<k\leq M$;
		\item[(c)] $M\geq 2^{m/8}$.
	\end{itemize}
	For every $A>0$,
	\begin{equation}\label{eq:lower_bound}
		\begin{aligned}
			& \sup_{(\beta,K)\in\mathcal{S}(\mu,R,C)}\mathrm{Pr}\left(\left\|K^s(\hat\beta - \beta) \right\| \geq An^{-\frac{\mu+s}{2(\mu+1)}}(\log n)^{-a(\mu+s)} \right) \\
			& \geq  \max_{\theta\in\{\theta^{(0)},\dots,\theta^{(M)} \}}\mathrm{Pr}\left(\left\|K^s(\hat\beta - \beta_{\theta}) \right\| \geq An^{-\frac{\mu+s}{2(\mu+1)}}(\log n)^{-a(\mu+s)} \right).
		\end{aligned}
	\end{equation}
	To obtain the lower bound for the right-hand side of the equation (\ref{eq:lower_bound}), we will use \cite{tsybakov2008introduction}, Theorem 2.5 and a specific choice of $A<\infty$. To that end, we need to check the following conditions:
	\begin{itemize}
		\item[(i)] $\|K^s(\beta_{\theta^{(j)}} - \beta_{\theta^{(k)}})\| \geq 2An^{-\frac{\mu+s}{2(\mu+1)}}(\log n)^{-a(\mu+s)}$ for all $0\leq j<k\leq M$;
		\item[(ii)] $P_j<<P_0,\forall j=1,\dots,M$, where $P_j$ denotes the distribution of $(Y_i,X_i)_{i\geq 1}$ for the slope parameter $\beta_{\theta^{(j)}}$;
		\item[(iii)] For $\alpha\in(0,1/8)$,
		\begin{equation*}
			\frac{1}{M}\sum_{j=1}^MKL(P_j,P_0)\leq \alpha\log M,
		\end{equation*}
		where $KL$ is the Kullback-Leibler divergence between $P_j$ and $P_0$. 
	\end{itemize}
	
	For the first condition, note that since $H(\theta^{(j)},\theta^{(k)}) = \sum_{l=m+1}^{2m}(\theta_l^{(j)} - \theta_l^{(k)})^2$, we have
	\begin{equation*}
		\begin{aligned}
			\left\|K^s(\beta_{\theta^{(j)}} - \beta_{\theta^{(k)}})\right\|^2 & = \left\|Rm^{-1/2}\sum_{l=m+1}^{2m}(\theta_l^{(j)} - \theta_l^{(k)}) \lambda_l^{\mu+s}v_l \right\|^2 \\
			& = \frac{R^2}{m}\sum_{l=m+1}^{2m}(\theta_l^{(j)} - \theta_l^{(k)})^2 \lambda_l^{2(\mu+s)} \\
			& \geq \frac{R^2}{m}\lambda_{2m}^{2(\mu+s)} H(\theta^{(j)},\theta^{(k)}) \\
			& \geq \frac{R^2}{8}\lambda_{2m}^{2(\mu+s)} = \frac{R^2}{8}(2m)^{-2(\mu+s)}\log^{-2a(\mu+s)}(2m) \\
			& \geq 4A^2n^{-\frac{\mu+s}{\mu+1}}\log^{-2a(\mu+s)} n
		\end{aligned}
	\end{equation*}
	where the last two inequalities follow from (b) provided that $m\leq n^\frac{1}{2(\mu+1)}$ for some $A>0$. This verifies (i).
	
	Next, since $Y_i|X_i\sim N(\langle X_i,\beta_{\theta^{(j)}}\rangle,\sigma^2)$ under $P_j$, we have $P_j<<P_0,\forall j=1,\dots,M$ with the log-likelihood ratio
	\begin{equation*}
		\log \frac{\mathrm{d} P_j}{\mathrm{d} P_0} = \frac{1}{\sigma ^{2}}\sum_{i=1}^{n}\left( Y_{i}-\left \langle X_{i}, \beta _{\theta^{(j)} } \right
		\rangle \right) \left \langle X_{i},\beta_{\theta^{(j)}}  - \beta _{\theta^{(0)}}\right \rangle + \frac{1}{2\sigma ^{2}}\sum_{i=1}^{n}\left \langle
		X_{i},\beta _{\theta^{(j)}} - \beta _{\theta^{(0)}}\right \rangle ^{2}.
	\end{equation*}
	To verify (iii), we compute the Kullback--Leibler divergence:
	\begin{equation*}
		\begin{aligned}
			KL(P_j,P_0) & = \int\log\frac{\mathrm{d} P_j}{\mathrm{d} P_0}\mathrm{d} P_j \\
			& = \frac{n}{2\sigma^2}\mathbb{E}\left\langle X_{i},\beta _{\theta^{(0)}} - \beta _{\theta^{(j)}}\right\rangle^{2} \\
			& = \frac{n}{2\sigma^2}\left\|K^{1/2}(\beta _{\theta^{(0)}} - \beta _{\theta^{(j)}})\right\|^2 \\
			& = \frac{n}{2\sigma^2}\left\|Rm^{-1/2}\sum_{l=m+1}^{2m}\theta_l^{(j)}\lambda_l^{\mu+1/2} v_l  \right\|^2 \\
			& =  \frac{nR^2}{2\sigma^2m}\sum_{l=m+1}^{2m}\left(\theta_l^{(j)}\right)^2\lambda_l^{2\mu+1} \\
			& \leq \frac{nR^2}{2\sigma^2}\lambda_m^{2\mu+1} = \frac{nR^2}{2\sigma^2}m^{-(2\mu+1)}\left(\log m\right)^{-a(2\mu+1)} \\
			& \leq \alpha\frac{m}{8}\log 2 = \alpha\log 2^{m/8},
		\end{aligned}
	\end{equation*}
	provided that $m\geq (c_0n)^\frac{1}{2(\mu+1)}(\log m)^{-a\frac{2\mu+1}{2(\mu+1)}}$ with $c_0=4R^2/(\sigma^2\alpha\log 2)$ some $\alpha\in(0,1/8)$.\footnote{To ensure that this constraint holds and that $m\leq n^\frac{1}{2(\mu+1)}$, we can take $m$ as a fraction of  $n^\frac{1}{2(\mu+1)}$.} This verifies (iii) in light of (c).
	
	Therefore, by \cite{tsybakov2008introduction}, Theorem 2.5
	\begin{equation*}
		\liminf_{n\to\infty}\inf_{\hat\beta}\max_{\theta\in\{\theta^{(0)},\dots,\theta^{(M)} \}}\mathrm{Pr}\left(\left\|K^s(\hat\beta - \beta_{\theta}) \right\| \geq An^{-\frac{\mu +s}{2(\mu +1)}} \log^{-a(\mu+s)} n\right) \geq 1-2\alpha>0
	\end{equation*}
	which implies the result in light of the inequality (\ref{eq:lower_bound}).
\end{proof}

\begin{proof}[\rm \textbf{Proof of Theorem~\ref{thm:pls_adaptation}}]
	Setting $m=\hat m$ and $\gamma=\delta$ in equation~(\ref{eq:bound_m}), under Assumption~\ref{as:stopping}, we obtain
	\begin{equation}\label{eq:K_hat_s_error}
		\left\|\hat K^s(\hat{\beta}_{\hat m}- \beta)\right\|  \lesssim a^{1}(\delta n)^{-1/2} + a^{\mu+s} +  a^s(\delta n)^{-\frac{\mu\wedge 1}{2}} +  a^s|\hat Q_{\hat m}'(0)| (\delta n)^{-1/2}
	\end{equation}
	for every $a\leq |\hat Q_{\hat m}'(0)|^{-1}$. Now we will choose the truncation level $a$. Suppose that $s\in[0,1)$. Then the function $a\mapsto a^{1}(\delta n)^{-1/2} + a^{\mu+s} $	is minimized at $a^* = \left\{(\delta n)^{1/2}(\mu+s)/(1-s)\right\}^{-\frac{1}{\mu+1}}$. If $a^* \leq |\hat Q_{\hat m}'(0)|^{-1}$, we shall choose $a=a^*$, in which case since $\delta\geq 1/n$, we obtain
	\begin{equation*}
		\left\|\hat K^s(\hat{\beta}_{\hat m} - \beta)\right\| \lesssim  (\delta n)^{-\frac{\mu+s}{2(\mu+1)}} + (\delta n)^{-\frac{s+\mu+1}{2(\mu+1)}}|\hat Q'_{\hat m}(0)| \lesssim (\delta n)^{-\frac{\mu+s}{2(\mu+1)}}.
	\end{equation*}
	On the other hand, if $a^*> |\hat Q_{\hat m}'(0)|^{-1}$, we shall choose $a=|\hat Q_{\hat m}'(0)|^{-1}$. Then
	\begin{equation*}
		\begin{split}
			\left\|\hat K^s(\hat{\beta}_{\hat m} - \beta)\right\| & \lesssim |\hat Q_{\hat m}'(0)|^{1-s}(\delta n)^{-1/2} + |\hat Q_{\hat m}'(0)|^{-(\mu+s)} +  |\hat Q_{\hat m}'(0)|^{-s}(\delta n)^{-\frac{\mu\wedge 1}{2}}  \\
			& \lesssim   (\delta n)^{-\frac{\mu+s}{2(\mu+1)}},
		\end{split}
	\end{equation*}
	where the last line follows from $|\hat Q'_{\hat m}(0)|\lesssim (\delta n)^{\frac{1}{2(\mu+1)}}$ by Lemma~\ref{lemma:Q_prime}, and from $|\hat Q_{\hat m}'(0)|^{-1}< a^* \lesssim (\delta n)^{-\frac{1}{2(\mu+1)}}$ and $(\delta n)^{-1}\leq 1$.
	
	If $s=1$, then setting $a=c(\delta n)^{-\frac{1}{2(\mu+1)}}$ in equation (\ref{eq:K_hat_s_error}), for some $c>0$ such that $a\leq |\hat Q'_{\hat m}(0)|^{-1}$, cf. Lemma~\ref{lemma:Q_prime}, we get
	\begin{equation*}
		\begin{aligned}
			\left\|\hat K^s(\hat{\beta}_{\hat m} - \beta)\right\|  & \lesssim (\delta n)^{-1/2} + (\delta n)^{-\frac{1+(\mu\wedge 1)(\mu+1)}{2(\mu+1)}} +  (\delta n)^{-\frac{\mu+2}{2(\mu+1)}}|\hat Q_{\hat m}'(0)| \\
			& \lesssim (\delta n)^{-1/2},
		\end{aligned}
	\end{equation*}
	where the last line follows from Lemma~\ref{lemma:Q_prime} and $(\delta n)^{-1}\leq 1$. Therefore, we've just established for every $s\in[0,1]$
	\begin{equation}\label{eq:K_hat_rate}
		\left\|\hat K^s(\hat{\beta}_{\hat m} - \beta)\right\|  \lesssim (\delta n)^{-\frac{\mu+s}{2(\mu+1)}}.
	\end{equation}
	This proves the statement of the theorem in the special case when $s=0$. On the other hand, if $s\in(0,1]$, we have
	\begin{equation*}
		\begin{aligned}
			\left\|K^s(\hat{\beta}_{\hat m} - \beta)\right\| & \leq \left\|\hat K^s(\hat{\beta}_{\hat m} - \beta)\right\|  + \left\|\hat K^s - K^s\right\|_{\rm op} \left\|\hat{\beta}_{\hat m} - \beta\right\|  \\
			& \lesssim (\delta n)^{-\frac{\mu+s}{2(\mu+1)}}  + (\delta n)^{-\frac{s\wedge 1}{2}}(\delta n)^{-\frac{\mu}{2(\mu+1)}} \\
			& \lesssim (\delta n)^{-\frac{\mu+s}{2(\mu+1)}},
		\end{aligned}
	\end{equation*}
	where we use equation~(\ref{eq:K_hat_rate}) and (\ref{eq:power_inequality}), and $(\delta n)^{-1}\leq 1$.
\end{proof}

\begin{proof}[\rm \textbf{Proof of Theorem~\ref{thm:number_of_components}}]
	For $1\leq m\leq n_*$ and $\nu\geq 0$, let $\tilde P_m^{(\nu)}$ be a Jacobi polynomial of degree $m$ on $[-1,1]$, i.e. a polynomial, orthogonal with respect to the weight $\lambda\mapsto (1-\lambda)^{\alpha}(1+\lambda)^{\beta}$, where we set $\alpha=-1/2$ and $\beta=2\nu-1/2$ with $\nu>0$. Let $P_m^{(\nu)}(\lambda)=\tilde P_m^{(\nu)}(2\lambda/\hat \lambda _1- 1)/\tilde P_m^{(\nu)}(-1),\forall\lambda\in[0,\hat\lambda_1]$ be a shifted Jacobi polynomial, normalized so that $P^{(\nu)}_m(0)=1$. By \cite{engl1996regularization}, Appendix A.2, p.294, there exists $c_{\nu}>0$ such that
	\begin{equation}\label{eq:jacobi_bound}
		|P_m^{(\nu)}(\lambda)| \leq c_{\nu}(1+m^2\lambda)^{-\nu},\qquad \forall \lambda\in[0,\hat\lambda_1],\;m\geq 0.
	\end{equation}	
	Recall also that under Assumption~\ref{as:complexity}, we have $\beta = K^{\mu}w$ with $\|w\| \leq R$. Then
	\begin{equation*}
		\begin{split}
			\left\|\hat r - \hat K\hat{\beta}_{m}\right\|  & = \left\| \hat{Q}_{m}(\hat{K})\hat r \right\| \\
			& \leq  \left\| P_m^{(\mu+1)}(\hat{K})\hat r \right\| \\
			& \leq  \left\|P_m^{(\mu+1)}(\hat{K})(\hat r - \hat K\beta)\right\| + \left\|P_m^{(\mu+1)}(\hat{K})\hat K\beta \right\| \\
			& \leq \left\|P_m^{(\mu+1)}(\hat K)\right\|_{\rm op} \left\|\hat r - \hat K\beta\right\|  + \left\|P_m^{(\mu+1)}(\hat{K})\hat{K}\hat K^{\mu}w\right\| \\
			& \qquad\qquad + \left\|P_m^{(\mu+1)}(\hat{K})\hat{K}(\hat K^{\mu} - K^{\mu})w\right\| \\
			& \lesssim  \sup_{\lambda\in[0,\hat\lambda_1]}|P_m^{(\mu+1)}(\lambda)| \left\|\hat r - \hat K\beta\right\| +  \sup_{\lambda\in[0,\hat\lambda_1]} \left|\lambda^{\mu+1}P_m^{(\mu+1)}(\lambda)\right| \\
			& \qquad\qquad  +  \sup_{\lambda\in[0,\hat\lambda_1]}|\lambda P_m^{(\mu+1)}(\lambda)|\left\|\hat K^{\mu} - K^{\mu}\right\|_{\rm op}  \\
			& \lesssim \left\|\hat r - \hat K\beta\right\| + m^{-2(\mu+1)} + \hat\lambda_1\left\|\hat K^{\mu} - K^{\mu}\right\|_{\rm op} \\
			& \lesssim \sigma\sqrt{\frac{2\E\|X\|^2}{\gamma n}}  + m^{-2(\mu+1)} + \|\hat K\|_{\rm op}\left(\frac{2\E\|X\|^4}{\gamma n}\right)^{\frac{\mu\wedge 1}{2}},
		\end{split}
	\end{equation*}
	where the second line follows since $\hat Q_m$ minimizes the problem in equation~(\ref{eq:residual_problem}); the sixth from the inequality (\ref{eq:jacobi_bound}); and the last by Lemma~\ref{lemma:probability_bounds} and the inequality (\ref{eq:power_inequality}) with probability at least $1-\gamma$ for every $\gamma\in(0,1)$. 
	
	Therefore, if $\mu\geq 1$, under Assumption~\ref{as:data} by Lemma~\ref{lemma:probability_bounds} and the inequality (\ref{eq:power_inequality}), we obtain
	\begin{equation*}
		\left\|\hat r - \hat K\hat{\beta}_{m}\right\| \leq  c \left\{\sigma\sqrt{\frac{2\E\|X\|^2}{\delta n}}  + m^{-2(\mu+1)}  \right\}
	\end{equation*}
	for some $c>0$ and $\delta\geq 1/n$ on the event with probability at least $1-\delta$. 	According to the stopping rule in the Assumption~\ref{as:stopping}, we also know that
	\begin{equation*}
		\tau \sigma\sqrt{\frac{2\E\|X\|^2}{\delta n}} \leq \left\|\hat r - \hat K\hat{\beta}_{\hat m-1}\right\|
	\end{equation*}
	Therefore, 
	\begin{equation*}
		\begin{aligned}
			(\tau-c)\sigma\sqrt{\frac{2\E\|X\|^2}{\delta n}} & \lesssim (\hat m - 1)^{-2(\mu+1)},
		\end{aligned}
	\end{equation*}
	and whence $\hat m \lesssim (\delta n)^{\frac{1}{4(\mu+1)}}$, provided that $\tau >c$.
\end{proof}

\begin{proof}[\rm \textbf{Proof of Theorem~\ref{thm:number_of_components2}}]
	For some integers $m\geq k\geq 0$, put
	\begin{equation*}
		G_{m}(\lambda) :=  \prod^{k}_{j = 1}\left(1 - \frac{\lambda}{\hat \lambda_{j}}\right)P_{m-k}^{(\mu+1)} \left( \frac{\lambda}{\hat{\lambda}_{k+1}}\right)
	\end{equation*}
	where $P_m^{(\nu)}(\lambda)=\tilde P_m^{(\nu)}(2\lambda/\hat \lambda_{k+1}- 1)/\tilde P_m^{(\nu)}(-1)$ is a shifted and normalized Jacobi polynomial on $[0,\hat\lambda_{k+1}]$, defined to be zero outside of this interval; see the proof of Theorem~\ref{thm:number_of_components}. Then $G_m$ is an $m^{\rm th}$ degree polynomial with $G_m(0)=1$ and
	\begin{equation*}
		\sup_{\lambda\in[0,\hat\lambda_{k+1}]}\left|\lambda^{\mu+1} G_m(\lambda) \right| \leq \sup_{\lambda\in[0,\hat\lambda_{k+1}]}\left|\lambda^{\mu+1} P_{m-k}^{(\mu+1)} \left( \frac{\lambda}{\hat{\lambda}_{k+1}}\right) \right|  \leq \hat \lambda_{k+1}^{\mu+1}c_{\mu}(m-k)^{-2(\mu+1)},
	\end{equation*}	
	where we use the inequality (\ref{eq:jacobi_bound}). Then since under Assumption~\ref{as:complexity}, $\beta= K^{\mu}w$ with $\|w\|\leq R$, we have with probability at least $1-\gamma$ for every $\gamma\in(0,1)$
	\begin{equation*}
		\begin{split}
			\left\|\hat r - \hat K\hat\beta_m\right\| & =\left\|\hat{Q}_{m}(\hat{K})\hat r\right\| \leq \left\|G_{m}(\hat{K})\hat r\right\| \\
			& \leq \left\|G_{m}(\hat{K})(\hat r - \hat K\beta)\right\| + \left\|G_{m}(\hat{K})\hat K\hat K^{\mu} w\right\| + \left\|G_{m}(\hat{K})\hat K(\hat K^{\mu} - K^{\mu}) w\right\| \\
			& \leq \sup_{\lambda\in[0,\hat\lambda_{k+1}]}|G_m(\lambda)|\left\|\hat r - \hat K\beta\right\| + R\sup_{\lambda\in[0, \hat\lambda_{k+1}]}|\lambda^{\mu+1}G_m(\lambda)| \\
			& \qquad\qquad + R\sup_{\lambda\in[0, \hat\lambda_{k+1}]}|\lambda G_m(\lambda)|\left\|\hat K^{\mu} - K^{\mu}\right\| \\
			& \lesssim \sigma\sqrt{\frac{2\E\|X\|^2}{\gamma n}} + \hat\lambda_{k+1}^{\mu+1}(m-k)^{-2(\mu+1)} +  \|\hat K\|_{\rm op}\left(\frac{2\E\|X\|^2}{\gamma n}\right)^\frac{\mu\wedge 1}{2} \\
		\end{split}
	\end{equation*}
	where the first inequality follows since $\hat Q_m$ solves the problem in equation (\ref{eq:residual_problem}) and for the last inequality, we use $|G_m(\lambda)|\leq |P_{m-k}^{(\mu+1)}(\lambda/\hat\lambda_{k+1})|\lesssim 1$; see equation~(\ref{eq:jacobi_bound}). Recall that $\|\hat K\|_{\rm op}\lesssim 1$ on an event with probability at least $1-\gamma$ for $\gamma\geq 1/n$; see Lemma~\ref{lemma:probability_bounds}. Therefore, since $\mu\geq 1$, we obtain
	\begin{equation*}
		\left\|\hat r - \hat K\hat{ \beta}_{m}\right\| \leq  c \left\{\sigma\sqrt{\frac{2\E\|X\|^2}{\delta n}}  + \hat\lambda_{k+1}^{\mu+1}(\hat m-k)^{-2(\mu+1)}  \right\}
	\end{equation*}
	for some $c>0$ and $\delta\geq 1/n$ on an event with probability at least $1-\delta$. 
	
	According to the stopping rule in the Assumption~\ref{as:stopping}, we also know that for some $\delta\in(0,1)$
	\begin{equation*}
		\tau \sigma\sqrt{\frac{2\E\|X\|^2}{\delta n}} \leq \left\|\hat r - \hat K\hat{\beta}_{\hat m-1}\right\|. 
	\end{equation*}
	Therefore, if $\tau>c$, we obtain
	\begin{equation*}
		\begin{aligned}
			(\tau-c)\sigma\sqrt{\frac{2\E\|X\|^2}{\delta n}} & \leq c \hat\lambda_{k+1}^{\mu+1}(\hat m-k-1)^{-2(\mu+1)}
		\end{aligned}
	\end{equation*}
	which implies that
	\begin{equation}\label{eq:stopping_eigen}
		\begin{aligned}
			\hat m-k-1 & \lesssim \hat\lambda_{k+1}^{1/2}(\delta n)^{\frac{1}{4(\mu+1)}} \\
			& \leq (\delta n)^{\frac{1}{4(\mu+1)}}\left\{\lambda_{k+1}^{1/2} + \left\|\hat K^{1/2} - K^{1/2}\right\|_{\rm op} \right\} \\
			& \lesssim (\delta n)^{\frac{1}{4(\mu+1)}}\left\{\lambda_{k+1}^{1/2} + (\delta n)^{-1/4} \right\},
		\end{aligned}
	\end{equation}
	where we use Weyl's inequality and equation~(\ref{eq:power_inequality}).
	
	\textbf{Case (i):} if $\lambda_k = O(k^{-2\kappa})$ with $\kappa>0$, we can take $k \sim \hat m/2$.  In this case equation (\ref{eq:stopping_eigen}) implies
	\begin{equation*}
		\hat m \lesssim (\delta n)^{\frac{1}{4(\mu+1)}} \hat m^{-\kappa} + (\delta n)^{-\frac{\mu}{4(\mu+1)}}.
	\end{equation*}
	If the second term in this upper bound dominates the first one, then $\hat m\lesssim (\delta n)^{-\frac{\mu}{4(\mu+1)}}=o(1)$, which is a contradiction. Therefore, $\hat m \lesssim (\delta n)^\frac{1}{4(\kappa+1)(\mu+1)}$.
	
	\textbf{Case (ii):} if $\lambda_{j} = O(q^{j})$ with $q\in(0,1)$, we can take $k=\hat m-2$. In this case equation (\ref{eq:stopping_eigen}) implies 
	\begin{equation*}
		1\lesssim (\delta n)^{\frac{1}{4(\mu+1)}} q^{(\hat m - 1)/2} + (\delta n)^{-\frac{\mu}{4(\mu+1)}}.
	\end{equation*}
	If the second term in this upper bound dominates the first one, then $1\lesssim (\delta n)^{-\frac{\mu}{4(\mu+1)}}=o(1)$, which is a contradiction. Therefore, $\hat m \lesssim 1+ \log(\delta n)$ since $\log q<0$.
\end{proof}

\begin{proof}[\rm \textbf{Proof of Theorem~\ref{thm:test}}]
	Observe that
	\begin{equation*}
		\hat{K}(\hat{\beta}_m - b) = (\hat{r} - \hat{K}b) + (\hat{K} \hat{\beta}_m - \hat{r}).
	\end{equation*}
	Then
	\begin{align*}
		T_n & = n \big\| \hat{K}(\hat{\beta}_m - b) \big\|^2 \\
		& = \left\langle \sqrt{n} \hat{K}(\hat{\beta}_m - b), \sqrt{n} \hat{K}(\hat{\beta}_m - b) \right\rangle \\
		& = \| \sqrt{n}(\hat{r} - \hat{K}b) \|^2 + n \| \hat{K} \hat{\beta}_m - \hat{r} \|^2 + 2 \left\langle \sqrt{n}(\hat{r} - \hat{K}b), \sqrt{n}(\hat{r} - \hat{K} \hat{\beta}_m) \right\rangle \\
		& =: I_n + II_n + III_n.
	\end{align*}
	
	By assumption, $II_n = o_P(1)$. Under $H_0$, by the Hilbert space central limit theorem, see \cite{bosq2000linear}, Theorem 2.7,
	\begin{equation*}
		\sqrt{n}(\hat{r} - \hat{K} b) = \frac{1}{\sqrt{n}} \sum_{i=1}^n \varepsilon_i X_i \xrightarrow{d} G,
	\end{equation*}
	where $G$ is a zero-mean Gaussian element in $\mathbb{H}$ with covariance operator $V$. By the continuous mapping theorem, see \cite{van1996weak}, Theorem 1.3.6
	\begin{equation*}
		I_n \xrightarrow{d} \|G\|^2,
	\end{equation*}
	By the Karhunen–Loève expansion, $G =  \sum_{j=1}^\infty \omega_j^{1/2} Z_j\varphi_j$, so we obtain
	\begin{equation*}
		I_n \xrightarrow{d} \sum_{j=1}^\infty \omega_jZ_j^2
	\end{equation*}
	under the null hypothesis. Note that this also shows that $\|\sqrt{n}(\hat{r} - \hat{K} \beta)\| = O_P(1)$. Then, under $H_0$
	\begin{equation*}
		|III_n| \leq 2 \|\sqrt{n}(\hat{r} - \hat{K} b)\| \cdot \sqrt{n} \|\hat{r} - \hat{K} \hat{\beta}_m\| = o_P(1),
	\end{equation*}
	since $\|\hat{r} - \hat{K} \hat{\beta}_m\| = o_P(n^{-1/2})$. This shows that under the null hypothesis, we have
	\begin{equation*}
		T_n\xrightarrow{d} \sum_{j=1}^\infty \omega_j Z_j^2.
	\end{equation*}
	
	\medskip 
	
	Next, under the fixed alternative hypothesis $\beta\ne b$, we have
	\begin{equation*}
		\begin{aligned}
			\hat{K}(\hat{\beta}_m - b) = (\hat{r} - \hat{K}\beta) + \hat{K}(\beta-b)  + (\hat{K} \hat{\beta}_m - \hat{r}).
		\end{aligned}
	\end{equation*}
	Note that the last term is $o_P(1)$ under the maintained assumptions. On the other hand, by the Hilbert space law of large numbers, see \cite{bosq2000linear}, Theorem 2.4, 
	\begin{equation*}
		\hat r - \hat K\beta =o_P(1) \qquad \text{and}\qquad \hat K(\beta - b) \xrightarrow{p}K(\beta-b).
	\end{equation*}
	This shows that
	\begin{equation*}
		n^{-1}T_n \xrightarrow{p} \|K(\beta-b)\|^2>0
	\end{equation*}
	as long as $K:\mathbb{H}\to\mathbb{H}$ does not have zero eigenvalues. Therefore, by Slutsky's theorem $T_n\xrightarrow{\rm a.s.}\infty$.
	
	\medskip
	
	Lastly, under the local alternative hypothesis, we have
	\begin{equation*}
		\begin{aligned}
			\sqrt{n}\hat K(\hat\beta_m - b) & = \sqrt{n}(\hat r - \hat K\beta) + \hat K\Delta + \sqrt{n}(\hat K\hat\beta_m - \hat r) \\
			& \xrightarrow{d} G + K\Delta.
		\end{aligned}
	\end{equation*}
	This shows that
	\begin{equation*}
		\begin{aligned}
			T_n & = n \big\| \hat{K}(\hat{\beta}_m - b) \big\|^2 \\
			& \xrightarrow{d} \|G + K\Delta\|^2 \\
			& = \|G\|^2 + 2\langle G,K\Delta\rangle + \|K\Delta\|^2 \\
			& = \sum_{j=1}^\infty\omega_jZ_j^2 + 2\sum_{j=1}^\infty\omega_j^{1/2}Z_j\langle \varphi_j,K\Delta\rangle + \|K\Delta\|^2,
		\end{aligned}
	\end{equation*}
	where the last line follows by the Karhunen–Loève expansion, $G =  \sum_{j=1}^\infty \omega_j^{1/2} Z_j\varphi_j$.
\end{proof}

\begin{proof}[\rm \textbf{Proof of Corollary~\ref{cor:size_power}}]
	The first two statements follow trivially from Theorem~\ref{thm:test}. The last statement follows since by Markov's inequality
	\begin{equation*}
		\lim_{n\to\infty}\Pr(T_n > z_{1-\alpha}) =  z_{1-\alpha}^{-1}\sum_{j=1}^\infty\omega_j + z_{1-\alpha}^{-1}\|K\Delta\|^2,
	\end{equation*}
	where we use the fact that $Z_j\sim N(0,1)$.
\end{proof}

\section{Comparison to PCA}\label{sec:pca}
In this section, we shed some light on the behavior of functional PLS relative to PCA. We will show that for the same fixed number of components $m$, PLS fits the empirical moment better than PCA, hence, it may require a smaller number of components to obtain a comparable fit. We also show that the regularization bias part of the estimation and prediction risk of PLS is smaller than the one of the PCA. Therefore, the adaptive PLS basis is better suited for approximating the slope coefficient. 

In what follows, we will use
\begin{equation*}
	\hat\beta_m^{\rm PLS}= \sum_{j=1}^{n_*}\hat P_m(\hat\lambda_j)\langle \hat r,\hat v_j\rangle\hat v_j\qquad \text{and}\qquad \hat\beta_m^{\rm PCA} = \sum_{j=1}^{m}\frac{1}{\hat\lambda_j}\langle \hat r,\hat v_j\rangle\hat v_j
\end{equation*}
to denote the functional PLS and PCA estimators. Note that the PLS estimator uses supervised regularization $\hat P_m$ while for the PCA estimator the regularization is fixed to select the terms related to the inverse of the largest $m$ eigenvalues of $\hat K$. We will also use 
\begin{equation*}
	\beta_m^{\rm PLS}=\sum_{j=1}^\infty P_m(\lambda_j)\langle r,v_j\rangle v_j\qquad \text{and}\qquad \beta_m^{\rm PCA} =  \sum_{j=1}^m\lambda_j^{-1}\langle r,v_j\rangle v_j
\end{equation*}
to denote the population counterparts.

\begin{theorem}\label{thm:fit_and_bias}
	If $n_*=n$, then for every $m \leq n_*$,
	\begin{equation*}
		\left\|\hat r - \hat K\hat\beta_m^{\rm PLS}\right\| \leq \left\|\hat r - \hat K\hat\beta_m^{\rm PCA}\right\|.
	\end{equation*}
	and
	\begin{equation*}
		\left\|K^s(\beta_m^{\rm PLS} - \beta)\right\| \leq \left\|K^s(\beta_m^{\rm PCA} - \beta)\right\|,\qquad \forall s\in[0,1].
	\end{equation*}
\end{theorem}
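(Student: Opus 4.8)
The plan is to reduce both inequalities to purely spectral statements and to prove each by exhibiting an explicit comparison polynomial and invoking the variational characterization of the PLS residual polynomial.

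\textbf{The fit inequality.} Recall from the proof of Proposition~\ref{prop:uniquness} that $\hat r-\hat K\hat\beta_m^{\rm PLS}=\hat Q_m(\hat K)\hat r$, where $\hat Q_m$ minimizes $\phi\mapsto\|\phi(\hat K)\hat r\|^2=[\phi,\phi]_0$ over the class $\mathcal{P}_m^0$ of polynomials of degree at most $m$ normalized by $\phi(0)=1$. When $n_*=n$ the nonzero eigenvalues $\hat\lambda_1>\dots>\hat\lambda_m>0$ are distinct, so $\varpi_m(\lambda):=\prod_{k=1}^m(1-\lambda/\hat\lambda_k)$ lies in $\mathcal{P}_m^0$. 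First I would note that $\langle\hat r,\hat v_j\rangle=0$ whenever $\hat\lambda_j=0$: indeed $\hat K\hat v_j=0$ forces $\frac1n\sum_i\langle X_i,\hat v_j\rangle^2=\langle\hat K\hat v_j,\hat v_j\rangle=0$, hence $\langle X_i,\hat v_j\rangle=0$ for every $i$ and so $\langle\hat r,\hat v_j\rangle=0$. By Parseval, $\|\varpi_m(\hat K)\hat r\|^2=\sum_{j>m}\varpi_m(\hat\lambda_j)^2\langle\hat r,\hat v_j\rangle^2$, and since $\hat\lambda_j<\hat\lambda_k$ for $j>m\ge k$ each factor $1-\hat\lambda_j/\hat\lambda_k$ lies in $(0,1)$, so $\varpi_m(\hat\lambda_j)^2\le 1$ and $\|\varpi_m(\hat K)\hat r\|^2\le\sum_{j>m}\langle\hat r,\hat v_j\rangle^2=\|\hat r-\hat K\hat\beta_m^{\rm PCA}\|^2$. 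The minimizing property of $\hat Q_m$ then gives $\|\hat r-\hat K\hat\beta_m^{\rm PLS}\|\le\|\varpi_m(\hat K)\hat r\|\le\|\hat r-\hat K\hat\beta_m^{\rm PCA}\|$.

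\textbf{The bias inequality: reduction and the case $s=1$.} Using $r=K\beta$, hence $\langle r,v_j\rangle=\lambda_j\langle\beta,v_j\rangle$, one checks that $\beta_m^{\rm PLS}-\beta=-Q_m(K)\beta$ and $\beta_m^{\rm PCA}-\beta=-\sum_{j>m}\langle\beta,v_j\rangle v_j$, where the population residual polynomial $Q_m=1-\lambda P_m\in\mathcal{P}_m^0$ minimizes $\phi\mapsto\|K\phi(K)\beta\|^2=\sum_j\lambda_j^2\langle\beta,v_j\rangle^2\phi(\lambda_j)^2$. Writing $c_j=\langle\beta,v_j\rangle$, the claim becomes $\sum_j\lambda_j^{2s}Q_m(\lambda_j)^2c_j^2\le\sum_{j>m}\lambda_j^{2s}c_j^2$. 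For $s=1$ this is immediate from the same device: with $\varpi_m(\lambda)=\prod_{k\le m}(1-\lambda/\lambda_k)\in\mathcal{P}_m^0$, the minimizing property gives $\sum_j\lambda_j^2Q_m(\lambda_j)^2c_j^2\le\sum_j\lambda_j^2\varpi_m(\lambda_j)^2c_j^2=\sum_{j>m}\lambda_j^2\varpi_m(\lambda_j)^2c_j^2\le\sum_{j>m}\lambda_j^2c_j^2$, again because $\varpi_m(\lambda_j)^2\le 1$ for $j>m$ by the ordering of the eigenvalues.

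\textbf{The case $0\le s<1$ --- the main obstacle.} Here $Q_m$ is no longer the minimizer of the functional in play, so a genuinely different argument is required, and crude bounds fail: the self-regulation built into the first-order conditions (which keeps $Q_m(\lambda_j)^2c_j^2$ small even where $|Q_m|$ is large) must be used. The route I would pursue is to split the spectrum at $\theta_m$, the smallest root of $Q_m$. By the analog of Lemma~\ref{lemma:polynomials}(i)--(ii) for the population polynomial, $0\le Q_m\le 1$ on $[0,\theta_m]$, so $\sum_{\lambda_j\le\theta_m}\lambda_j^{2s}Q_m(\lambda_j)^2c_j^2\le\sum_{\lambda_j\le\theta_m}\lambda_j^{2s}c_j^2$, and since $\theta_m\le\lambda_m$ by the Courant--Fischer principle (the Krylov space is a subspace of $\mathbb H$) these indices lie among $\{j:\lambda_j\le\lambda_m\}$, so this part is absorbed into the PCA truncation error. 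It then remains to dominate the high-eigenvalue part $\sum_{\lambda_j>\theta_m}\lambda_j^{2s}Q_m(\lambda_j)^2c_j^2$ by the remaining part of the PCA side; this is exactly the regime where $|Q_m|$ may exceed $1$, and where replacing $\lambda_j^{2s}$ by $\theta_m^{2s-2}\lambda_j^2$ and invoking the $s=1$ bound loses too much. The plan is to use instead the $[\cdot,\cdot]_1$-orthogonality of $Q_m$ --- equivalently, the Gauss quadrature rule generated by the orthogonal polynomial $Q_m$ --- to show that the reduction $\sum_j\lambda_j^{2s}\bigl(1-Q_m(\lambda_j)^2\bigr)c_j^2$ achieved by PLS is at least the reduction $\sum_{j\le m}\lambda_j^{2s}c_j^2$ achieved by discarding the $m$ leading principal components. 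Obtaining this with constant exactly one, rather than up to a multiplicative factor as in generic quasi-optimality estimates for conjugate gradients, is the principal technical difficulty of the theorem; the $s\in[0,1)$ half therefore rests on the fine structure of the conjugate-gradient residual polynomial rather than on soft optimality arguments.
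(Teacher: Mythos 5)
Your treatment of the first inequality and of the $s=1$ case of the second inequality is correct and coincides with the paper's argument: in both places the paper also tests the variational characterization of the residual polynomial against the comparison polynomial $\varpi_m(\lambda)=\prod_{k=1}^m(1-\lambda/\hat\lambda_k)$ (resp.\ its population analogue), which annihilates the top $m$ spectral components and has all factors in $(0,1)$ on the remaining ones.

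The genuine gap is the case $s\in[0,1)$ of the bias inequality, which you explicitly leave as a plan rather than a proof. Your proposed route --- splitting the spectrum at the smallest root $\theta_m$, absorbing the low part via $0\le Q_m\le 1$ on $[0,\theta_m]$, and then trying to dominate the high part through the $[\cdot,\cdot]_1$-orthogonality / Gauss quadrature structure --- is never carried out, and the crucial step (that the reduction achieved by PLS dominates the PCA reduction \emph{with constant exactly one}) is precisely what you concede is missing; an argument that ends by naming its own principal technical difficulty is not a proof of the theorem. The paper does not take this route at all: it closes the case of general $s$ by applying the weighted comparison bound
\begin{equation*}
\left\|K^sQ_m(K)\beta\right\|^2 \;\leq\; \sum_{j=1}^\infty\lambda_j^{2s}\prod_{k=1}^m\Bigl(1 - \frac{\lambda_j}{\lambda_{k}}\Bigr)^{2}\langle\beta,v_j\rangle^2,
\end{equation*}
valid for the weight $\lambda^{2s}$ and not only for the minimized weight $\lambda^{2}$, which it attributes to (the population analogue of) Proposition 6.2 of Blaz\`ere et al.\ (2014); once this display is granted, the conclusion follows exactly as in your $s=1$ computation. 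Your observation that the plain variational property of $Q_m$ does not by itself yield this inequality for $s\neq 1$ is a fair and substantive point --- the paper's ``similarly'' leans entirely on the cited external result rather than on the optimality of $Q_m$ in the $K^s$-weighted norm --- but to complete your proof you would either need to invoke that result as the paper does, or actually establish the quadrature-based domination you sketch, neither of which your proposal accomplishes.
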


\begin{proof}[\rm \textbf{Proof of Theorem~\ref{thm:fit_and_bias}}]
	If $m=n_*$, then the PLS objective is zero, so the result is trivial. Suppose that $m<n_*$. Recall that $\hat Q_m(\lambda)=1-\lambda \hat P_m(\lambda)$. Let $(\hat v_j)_{j=1}^\infty$ be a basis of $\mathbb{H}$, where the first $n_*$ terms correspond to the eigenbasis of $\hat K$. Then $\hat r = \sum_{j=1}^\infty\langle \hat r,\hat v_j\rangle\hat v_j$ and for every $m< n_*$ by \cite{blazere2014unified}, Proposition 6.2
	\begin{equation*}
		\begin{aligned}
			\left\|\hat r - \hat K\hat\beta_m^{\rm PLS}\right\|^2  & = \left\|\hat Q_m(\hat K)\hat r\right\|^2 \\
			& \leq  \sum_{j=m+1}^{n_*}\prod_{k=1}^m\left(1 - \frac{\hat\lambda_j}{\hat\lambda_{k}}\right)^2\langle \hat r,\hat v_j\rangle^2 \\
			& \leq \sum_{j=m+1}^{n_*} \langle\hat r,\hat v_j\rangle^2 \\
			& = \left\|\sum_{j=1}^\infty \langle \hat r,\hat v_j\rangle\hat v_j - \sum_{j=1}^m\frac{1}{\hat\lambda_j}\langle \hat r,\hat v_j\rangle\hat K\hat v_j \right\|^2 \\
			& = \left\|\hat r - \hat K\hat\beta_m^{\rm PCA}\right\|^2,
		\end{aligned}
	\end{equation*}
	where the third line follows since $\hat\lambda_1\geq \hat\lambda_2\geq \dots\geq \hat\lambda_{n_*}>0$; and the fourth by Parseval's identity.
	
	For the second part, put $Q_m(\lambda)=1-\lambda P_m(\lambda)$, where $\beta_m^{\rm PLS}=P_m(K)r$ solves the population counterpart to the problem in equation (\ref{eq:pls_problem}). 	Similarly, since $\beta = \sum_{j=1}^\infty\langle \beta,v_j\rangle v_j$ and $K\beta=r$, we have
	\begin{equation*}
		\begin{aligned}
			\left\|K^s(\beta_m^{\rm PLS} - \beta)\right\|^2 & = \left\|K^sQ_m(K)\beta\right\|^2 \\
			& \leq \sum_{j=1}^\infty\lambda_j^{2s}\prod_{k=1}^m\left(1 - \frac{\lambda_j}{\lambda_{j_k}}\right)^2 \langle\beta,v_j\rangle^2 \\
			& \leq \sum_{j=m+1}^\infty\lambda_j^{2s}\langle\beta,v_j\rangle^2 \\
			& = \left\|K^s\left(\sum_{j=1}^\infty\langle \beta,v_j\rangle v_j - \sum_{j=1}^m\frac{1}{\lambda_j}\langle K\beta,v_j\rangle v_j \right)\right\|^2 \\
			& =  \left\|K^s(\beta - \beta_m^{\rm PCA})\right\|^2.
		\end{aligned}
	\end{equation*}
\end{proof}

The first part of Theorem~\ref{thm:fit_and_bias} shows that the PLS estimator fits the data better than PCA for the same number of components $1\leq m\leq n_*$. This is the functional version of a result of \cite{jong1993pls}; see also \cite{phatak2002exploiting} and \cite{blazere2014unified}. For the second part of Theorem~\ref{thm:fit_and_bias}, it is worth recalling that the estimation and prediction errors in Theorem~\ref{thm:pls_rate} can be decomposed as
\begin{equation*}
	K^s\left(\hat\beta_m^{\rm PLS } - \beta\right) = K^s\left(\hat\beta_m^{\rm PLS } - \beta^{\rm PLS }_m \right) + K^s\left(\beta_m^{\rm PLS } - \beta\right),\qquad s\in\{0,1/2\},
\end{equation*}
where the second term is the so-called regularization bias. This shows that the PLS basis is more adapted for approximating the slope $\beta$ than the PCA basis.

\section{Comparison to Standard PLS}\label{sec:standard_pls}
We provide some comparison between our PLS which is a functional conjugate gradient method applied to the moment equation with the standard PLS. Note that the standard PLS solves
\begin{equation}
	\min_{b\in\mathbb{H}_m}\|\mathbf{y} - T_nb\|_n^2,
\end{equation}
where $\mathbb{H}_{m}=\mathrm{span}\left \{ \hat{r},\hat{K}\hat{r},...,%
\hat{K}^{m-1}\hat{r}\right \}$. Equivalently, we solve%
\[
\min_{\alpha_1,\dots,\alpha_m\in\mathbb{R} }\left\| \mathbf{y} - \sum_{j=1}^{m}\alpha _{j}T_{n}\hat{K}^{j-1}%
\hat{r}\right\|_n^{2}.
\]%
The first order conditions give%
\begin{equation}
	-\left \langle T_{n}\hat{K}^{j-1}\hat{r},\mathbf{y}\right \rangle +\left \langle
	T_{n}\hat{K}^{j-1}\hat{r},\sum_{l=1}^{m}\alpha _{l}T_{n}\hat{K}%
	^{l-1}\hat{r}\right \rangle =0,\qquad j=1,2,...,m.  \label{eq1}
\end{equation}%
Remark that%
\begin{eqnarray*}
	T_{n}\hat{K}^{j-1}\hat{r} &=&T_{n}\hat{K}^{j-1}T_{n}^{\ast
	}\mathbf{y}=T_{n}\left( T_{n}^{\ast }T_{n}\right) ^{j-1}T_{n}^{\ast }\mathbf{y}=\left(
	T_{n}T_{n}^{\ast }\right) ^{j}\mathbf{y}, \\
	\left \langle T_{n}\hat{K}^{j-1}\hat{r},T_{n}\hat{K}^{l-1}%
	\hat{r}\right \rangle  &=&\left \langle \hat{K}^{j-1}\hat{r}%
	,T_{n}^{\ast }T_{n}\hat{K}^{l-1}\hat{r}\right \rangle  \\
	&=&\left \langle \hat{K}^{j-1}\hat{r},\hat{K}^{l}\hat{r}%
	\right \rangle  \\
	&=&\left \langle \hat{r},\hat{K}^{j+l-1}\hat{r}\right \rangle  \\
	&=&\left \langle T_{n}^{\ast }\mathbf{y},\hat{K}^{j+l-1}T_{n}^{\ast
	}\mathbf{y}\right \rangle  \\
	&=&\left \langle \mathbf{y},\left( T_{n}T_{n}^{\ast }\right) ^{j+l}\mathbf{y}\right \rangle  \\
	&=&\mathbf{y}^\top\left( T_{n}T_{n}^{\ast }\right) ^{j+l}\mathbf{y}.
\end{eqnarray*}%
where $T_{n}T_{n}^{\ast }$ is the $n\times n$ matrix with $\left( i,j\right) 
$ element $\left \langle X_{i},X_{j}\right \rangle$. Let $v_{1}$ be the $%
m\times 1$ vector and $M_{1}$ the $m\times m$ matrix such that

\begin{equation*}
	\begin{aligned}
		v_{1} & =
		\begin{bmatrix}
			\left \langle T_{n}\hat{K}\hat{r},\mathbf{y}\right \rangle  \\ 
			\vdots  \\ 
			\left \langle T_{n}\hat{K}^{m-1}\hat{r},\mathbf{y}\right \rangle 
		\end{bmatrix}%
		=
		\begin{bmatrix}
			\mathbf{y}^\top\left( T_{n}T_{n}^{\ast }\right) \mathbf{y} \\ 
			\vdots  \\ 
			\mathbf{y}^\top\left( T_{n}T_{n}^{\ast }\right) ^{m}\mathbf{y}%
		\end{bmatrix}, \\
		M_1 & = \left[ \left \langle T_{n}\hat{K}^{j-1}\hat{r}%
		,T_{n}\hat{K}^{l-1}\hat{r}\right \rangle \right] _{1\leq j,l\leq m}=\left[
		\mathbf{y}^\top\left( T_{n}T_{n}^{\ast }\right) ^{j+l}\mathbf{y}\right] _{1\leq j,l\leq m}.
	\end{aligned}	
\end{equation*}

Equation (\ref{eq1}) can be rewritten as $M_{1}\alpha =v_{1}$ where $\alpha =%
\left[ \alpha _{1},\alpha _{2},...,\alpha _{m}\right]^\top.$ So that $%
\hat\alpha _{PLS}=M_{1}^{-1}v_{1},$ provided that the matrix $M_1$ is invertible.

\bigskip 

Our version of PLS solves 
\[
\min_{b\in \mathbb{H}_{m}}\left \Vert T_{n}^{\ast }\mathbf{y}-T_{n}^{\ast
}T_{n}b\right \Vert ^{2}
\]%
where $\mathbb{H}_{m}=\mathrm{span}\left \{ \hat{r},\hat{K}\hat{r},...,%
\hat{K}^{m-1}\hat{r}\right \} .$ This can be understood as the PLS with respect to the weighted norm. Equivalently, we want to solve%
\[
\min_{\alpha_1,\dots,\alpha_m\in\mathbb{R} }\left \Vert T_{n}^{\ast }\mathbf{y}-\sum_{j=1}^{m}\alpha _{j}\hat{K}%
^{j}\hat{r}\right \Vert ^{2}.
\]%
Using the first order condition, we obtain the equation $v_{2}=M_{2}\alpha $
with 
\begin{equation*}
	\begin{aligned}
		v_{2} & = \left[ 
		\begin{array}{c}
			\left \langle \hat{K}\hat{r},T_{n}^{\ast }\mathbf{y}\right \rangle  \\ 
			\vdots  \\ 
			\left \langle \hat{K}^{m}\hat{r},T_{n}^{\ast }\mathbf{y}\right \rangle 
		\end{array}%
		\right] =\left[ 
		\begin{array}{c}
			\mathbf{y}^{\top }\left( T_{n}T_{n}^{\ast }\right) ^{2}\mathbf{y} \\ 
			\vdots  \\ 
			\mathbf{y}^{\top }\left( T_{n}T_{n}^{\ast }\right) ^{m+1}\mathbf{y}%
		\end{array}%
		\right] , \\
		M_{2} & = \left[ \left \langle \hat{K}^{j}\hat{r},%
		\hat{K}^{l}\hat{r}\right \rangle \right] _{j,l}=\left[ \mathbf{y}^{\top
		}\left( T_{n}T_{n}^{\ast }\right) ^{j+l+1}\mathbf{y}\right] _{j,l}.
	\end{aligned}	
\end{equation*}
If we compare $v_{1}$ and $v_{2}$, $M_{1}$ and $M_{2},$ we see that the only
difference is in the power of $\left( T_{n}T_{n}^{\ast }\right) $. Note also that 
$M_{1}$ and $M_{2}$ are Hankel matrices.

\section{Additional Simulations: Early Stopping and Confidence Sets}\label{suppl:simulations}
In this section we report additional simulation results for our early stopping rule and confidence sets. 

\subsection{Early Stopping}
Recall that our PLS amounts to fitting the norm of a ``sample moment" (or a score)
$$\left\| \hat{r}-\hat{K}\hat{\beta }_{m}\right\|.$$ The norm decreases monotonically to zero in $m$ and it becomes zero when the number of conjugate gradient steps reaches the number of non-zero eigenvalues of $\hat K$, i.e. $m=n_*$. To prevent overfitting, we select the number of PLS components as the first value of $m$ for which the norm of residual drops below a certain threshold:
\begin{equation}\label{eq:threshold}
	\left \Vert \hat{r}-\hat{K}\hat{\beta }_{m}\right \Vert \leq \tau
	\sigma \sqrt{\frac{2\E\left \Vert X\right \Vert ^{2}}{\delta n}},
\end{equation}
where $\tau >1$ is a constant and $1-\delta$ is the confidence level of the rule; see Assumption~\ref{as:stopping}. To describe the practical implementation, we focus on the functional linear model
\begin{equation*}
	Y_i = \int_0^1\beta(s)X_i(s)\mathrm{d} s + \varepsilon_i,\qquad i=1,\dots,n.
\end{equation*}
We compute the fitted moment as:
\begin{equation*}
	\hat r(u) - (\hat K\hat\beta_m)(u) = \frac{1}{n}\sum_{i=1}^n\left(Y_i - \int_0^1\hat\beta_m(s)X_i(s)\mathrm{d}s\right)X_i(u),
\end{equation*}
To compute the threshold in equation (\ref{eq:threshold}), we estimate $\E\left \Vert X\right \Vert ^{2}$ by $\frac{1}{n}\sum_{i=1}^{n}\int_0^1 X_{i}^{2}\left( s\right)
ds$ and $\sigma ^{2}$ by $\hat\sigma^2 = \frac{1}{n}\sum_{i=1}^{n}\hat{\varepsilon}_{i}^{2}$, where 
\begin{equation*}
	\hat{\varepsilon}_{i}=Y_{i}-\int_0^1 \hat{\beta }\left( s\right) X_{i}\left(
	s\right) ds
\end{equation*}
and $\hat{\beta }$ is a preliminary estimator of $\beta $, described below. All integrals are discretized at the uniform grid of $200$ points. 

We consider an iterative approach for estimating $\sigma ^{2}$ inspired by \cite{chernozhukov2024applied}, Section 3.A. To that end, we first obtain a pilot estimator of $\beta$, denoted $\hat\beta_0$. Next, we compute the estimator
of $\sigma ^{2}:$%
\[
\hat{\sigma}_{0}^{2}=\frac{1}{n}\sum_{i=1}^{n}\left( Y_{i}-\langle X_i,\hat\beta_0\rangle\right) ^{2}.
\]%
If $n>>T$, we can use the OLS estimator. Otherwise, any other regularized estimator can be used, e.g. PCA with generalized cross-validation. We set $k=0$ and specify a small constant $\xi \geq 0$ as a tolerance level and the maximum number of iterations $k_{\max }.$ The iterative procedure is described in Algorithm~\ref{algor2}.

\begin{algorithm}[H]\label{algor2}
	\SetAlgoLined
	\KwResult{$\hat\sigma^2_{k}$ }
	\textbf{Initialisation:}
	$\xi$,  $k_{\max}$\;
	\For{$k = 0,1,\dots, k_{\max}$}{
		1. Compute $\hat{\beta }$ using the early stopping rule with $\sigma^2$
		replaced by $\hat{\sigma}_{k}^{2}$;
		
		2. Set $\hat{\sigma}_{k+1}^{2}=\frac{1}{n}\sum_{i=1}^{n}(
		Y_{i}- \langle \hat{\beta},X_{i} \rangle ) ^{2}$;
		
		3. If $\left \vert \hat{\sigma}_{k+1}^{2}-\hat{\sigma}_{k}^{2}\right \vert
		\leq \xi $ stop. 
	}
	\caption{Iterative Estimation of $\sigma^2$.}
\end{algorithm}

\begin{figure}
	\caption{Slope coefficient $\beta$ (solid black) and averaged early stopped PLS estimator $\hat\beta_{\hat m}$ (dashed red) with 90\% pointwise confidence bands (shaded gray), calculated from $5,000$ samples of size $n=1,000$. The median number of selected functional components is reported below.}
	\begin{center}
		\begin{subfigure}{0.45 \linewidth}
			\includegraphics[width = \linewidth]{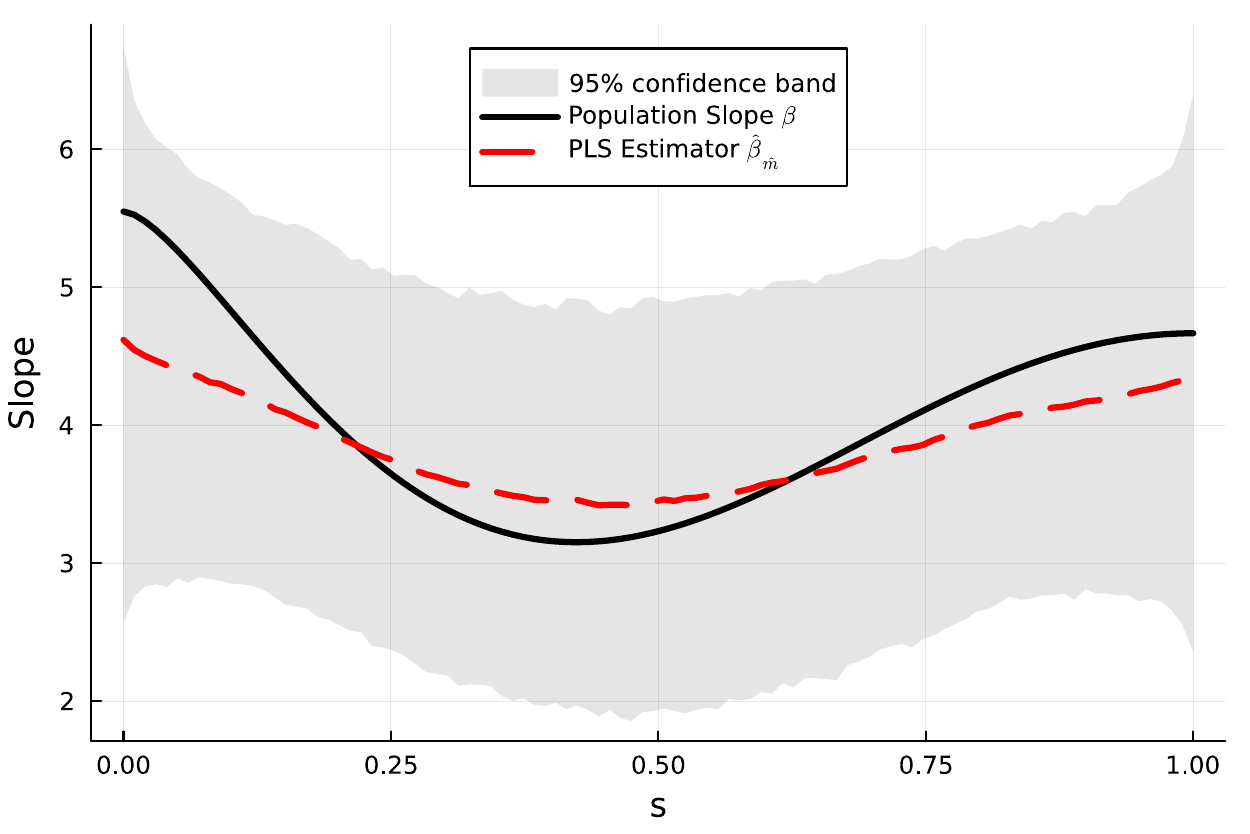} 
			\caption{Model 1: $\mathrm{med}(\hat m) = 2$}
		\end{subfigure}
		\begin{subfigure}{0.45 \linewidth}
			\includegraphics[width = \linewidth]{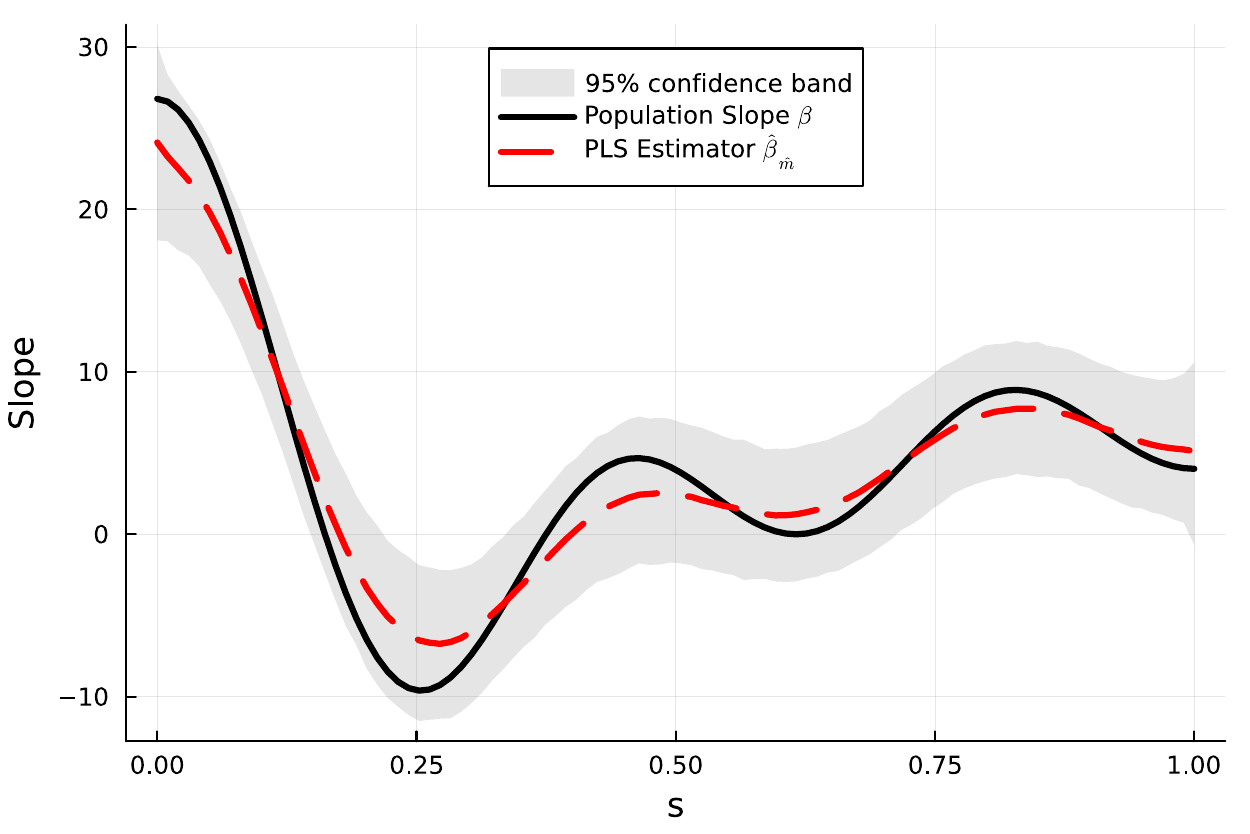}
			\caption{Model 2: $\mathrm{med}(\hat m)= 4$}
		\end{subfigure}
		\begin{subfigure}{0.45 \linewidth}
			\includegraphics[width = \linewidth]{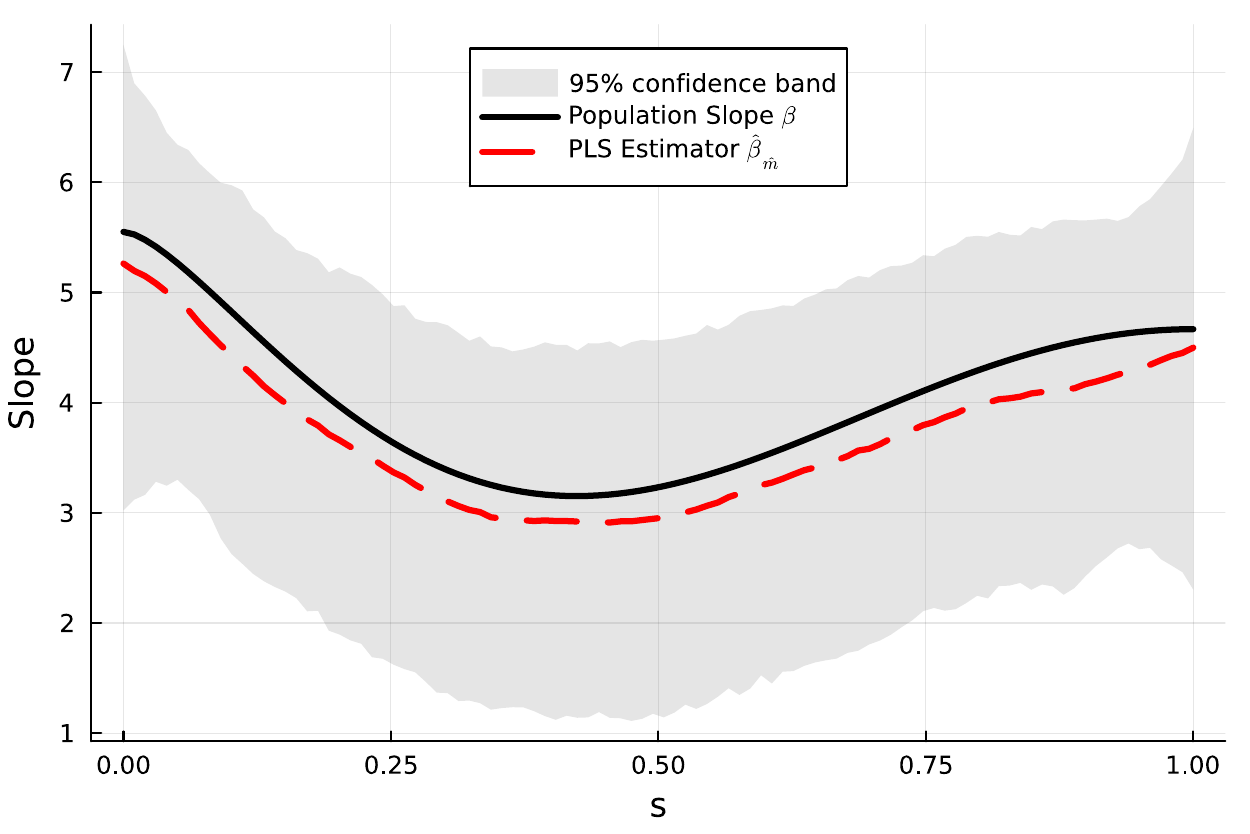} 
			\caption{Model 3: $\mathrm{med}(\hat m) = 4$}
		\end{subfigure}
	\end{center}
	\label{fig:betas}
\end{figure}

We use the following values in all simulation designs: $\delta=0.1$, $k_{\max}=10$, and $\xi=0.01$. These values correspond to a $90\%$ confidence level, a sufficiently high number of iterations, and a sufficiently low tolerance level  respectively. Our theory also tells us that we need $\tau>1$, so we set $\tau=1.01$. The same values are also used in the empirical application. We noticed in simulations that the algorithm tends to overestimate $\sigma^2$ which leads to underestimated values of $m$.  Figure~\ref{fig:betas} reports the population slope coefficient (solid black) and the average values of estimated slope coefficients $\hat\beta_{\hat m}$ (dashed red) when the selected number of PLS components $\hat m$ for the four simulation designs considered in the paper, Section~\ref{sec:mc}. We also report the median values of $\hat m$ selected by our data-driven adaptive rule in each case. Overall, we can see that the PLS estimator with the number of components selected using early stopping can successfully recover the global shape of the slope parameter across all simulation designs. It is worth noting that the early stopping rule selects the number of PLS components to achieve the minimax-optimal convergence rate for both the mean integrated squared error (MISE) and the mean squared prediction error (MSPE). In doing so, it simultaneously balances and minimizes the sum of the squared bias and the variance. This behavior is consistent with the results shown in Figure~\ref{fig:betas}, where a visible bias is observed as part of the optimal bias-variance trade-off.

Figure~\ref{fig:stopping} visualizes our early stopping rule. The left vertical axis corresponds to the values of MSPE (solid black) while the right vertical axis to the early stopping rule (blue). The number of PLS components is selected as the first $m$ (blue dot) when the norm of the fitted moment (dashed blue) drops below the value of the threshold (dotted blue). We also plot the minimum (black dot) of the simulated MSPE (solid black) together with $90\%$ confidence band (shaded gray).  Given the uncertainty behind the lowest value of the MSPE as well as the norm and the threshold, the early stopping rule produces reasonable values of tuning parameters that are compatible with the global minimum of MSPE once the statistical uncertainty is accounted for. The simulation results also confirm that there is no reason to select higher values of $\tau$ since they would increase the threshold and lead to more convervative choices of $\hat m$. We conclude that our early stopping rule is prone to oversmoothing.
\begin{figure}
	\caption{Adaptive Early Stopping Rule Visualization, calculated from $5,000$ samples of size $n=1,000$. We plot the average MSPE (solid black curve) with a 95\% confidence interval obtained from simulations. We also plot the average norm (dashed blue). The number of components is selected as the first $m$ (blue dot) when the norm (dashed blue) drops below the the threshold (dotted blue). The black dot correspond to the lowest values of the average MSPE (solid black) which is plotted with $90\%$ confidence band (shaded gray).}
	\begin{center}
		\begin{subfigure}{0.45 \linewidth}
			\includegraphics[width = \linewidth]{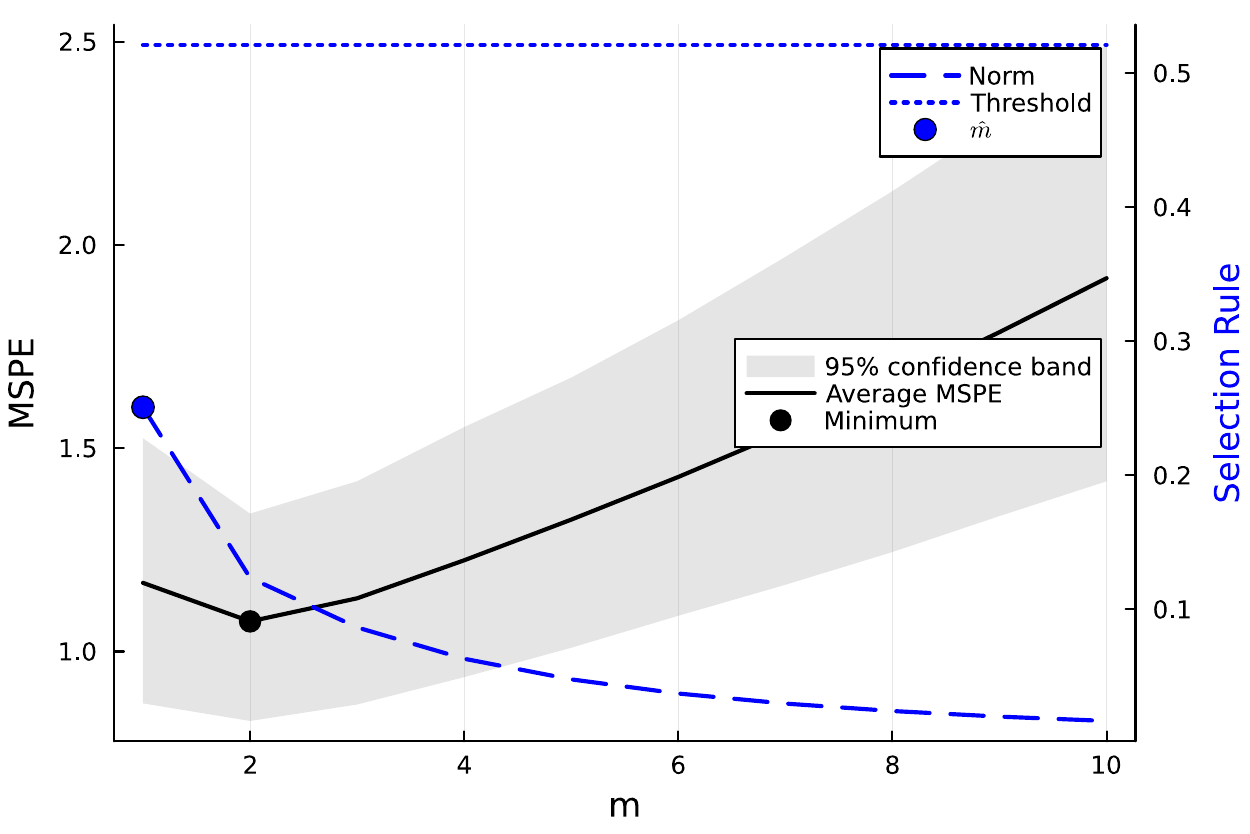} 
			\caption{Model 1: $\mathrm{med}(\hat m) = 2$}
		\end{subfigure}
		\begin{subfigure}{0.45 \linewidth}
			\includegraphics[width = \linewidth]{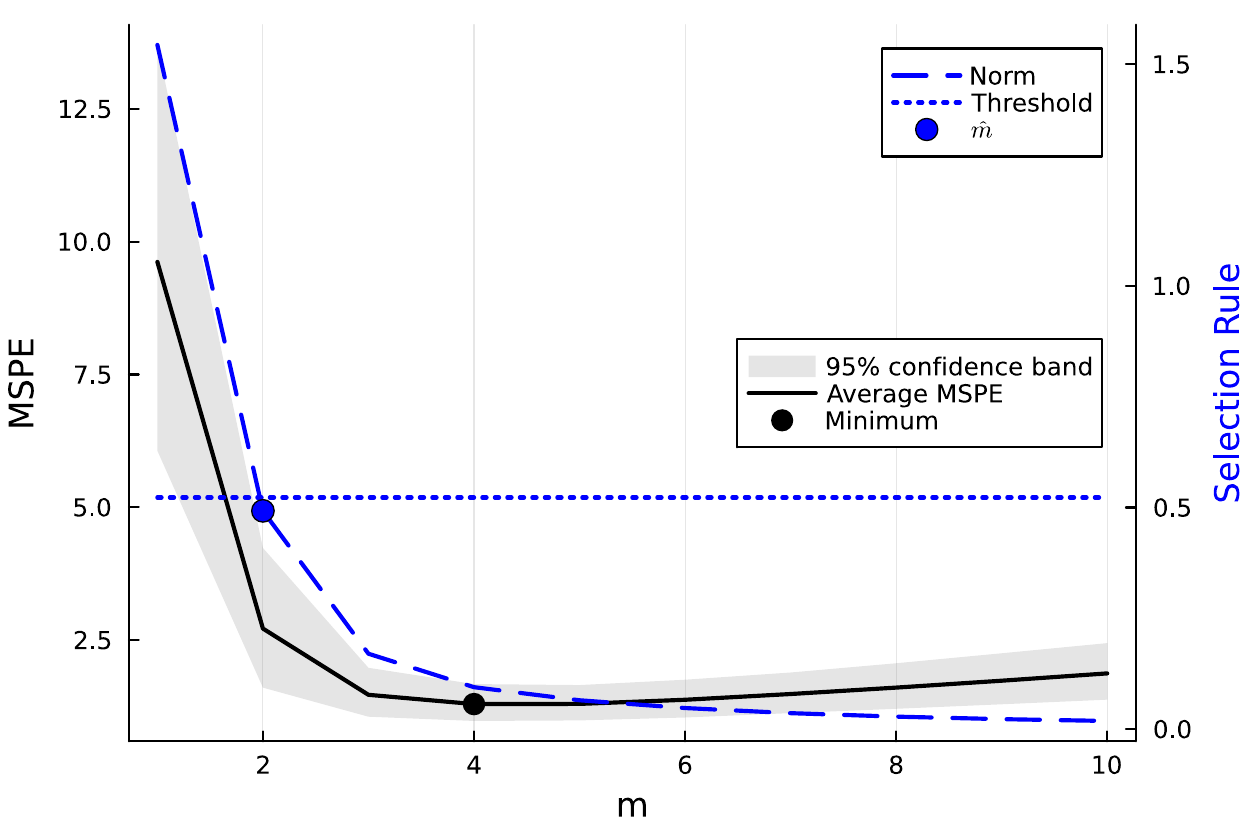}
			\caption{Model 2: $\mathrm{med}(\hat m)= 4$}
		\end{subfigure}
		\begin{subfigure}{0.45 \linewidth}
			\includegraphics[width = \linewidth]{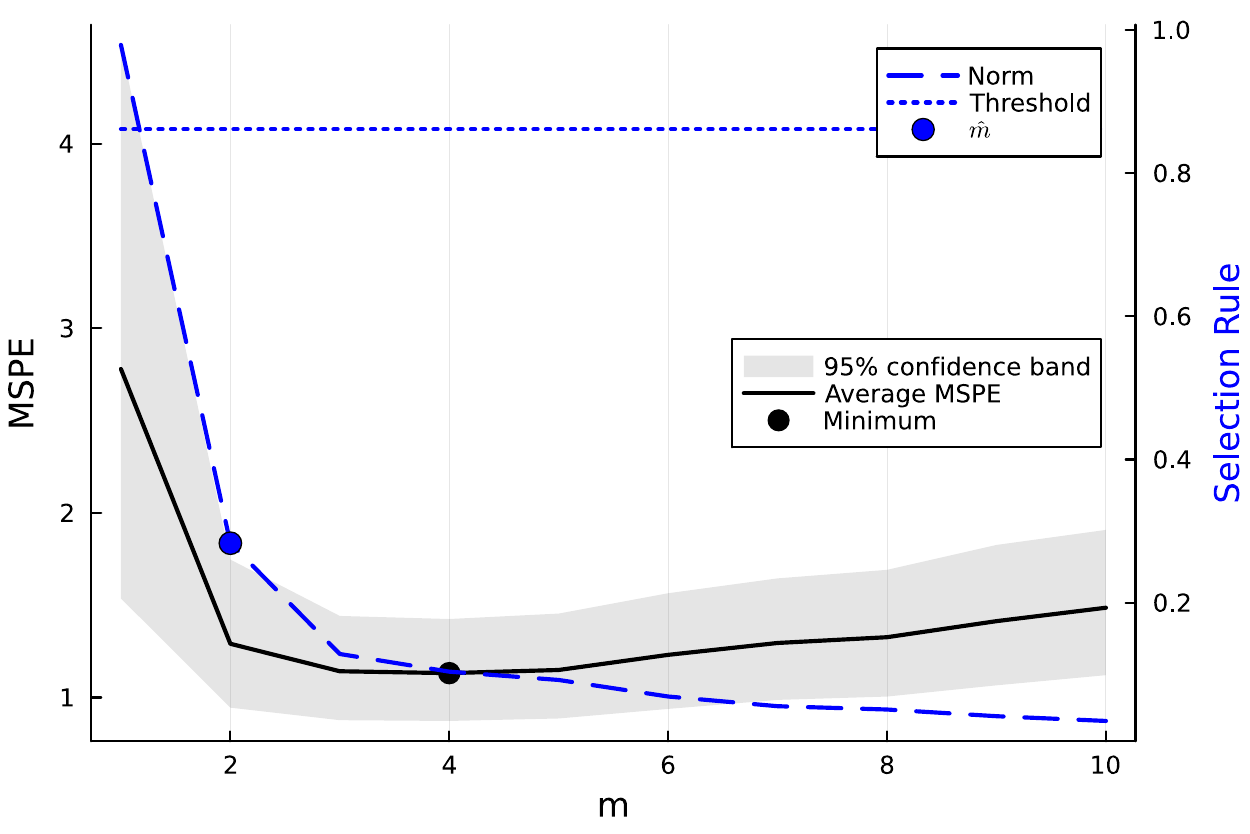} 
			\caption{Model 3: $\mathrm{med}(\hat m) = 4$}
		\end{subfigure}
	\end{center}
	\label{fig:stopping}
\end{figure}

\subsection{Confidence Sets}
To compute the confidence sets using test inversion, we note that if $(h_j)_{j=1}^\infty$ is a basis of $L_2[0,1]$, then
\begin{equation*}
	\beta(s) = \sum_{j=1}^\infty b_jh_j(s)
\end{equation*}
with coefficients $b_j = \langle\beta,h_j\rangle$. Since $b_j\downarrow 0$, to reduce the computational cost, we search over functions $\beta$, where the sum is truncated to include only the first five basis functions. We use the cosine basis and create a uniform grid of $20$ points on $[0,4.5]^5$ corresponding to the first five coefficients $b_1,\dots,b_5$. We simulate the critical value using the approximation to the asymptotic distribution $\sum_{j=1}^{100}\lambda_jZ_j^2$, where $(\lambda_j)_{j=1}^\infty$ are the eigenvalues of $K$, and we use $50,000$ replications to compute the critical value $z_{0.95}$.

Figure~\ref{fig:confidence_set} displays the $95\%$ confidence sets. These confidence sets are quite informative about the global shape properties of the estimated slope parameter. 
\begin{figure}[htbp]
	\centering
	\begin{subfigure}[b]{0.5\textwidth}
		\includegraphics[width=\textwidth]{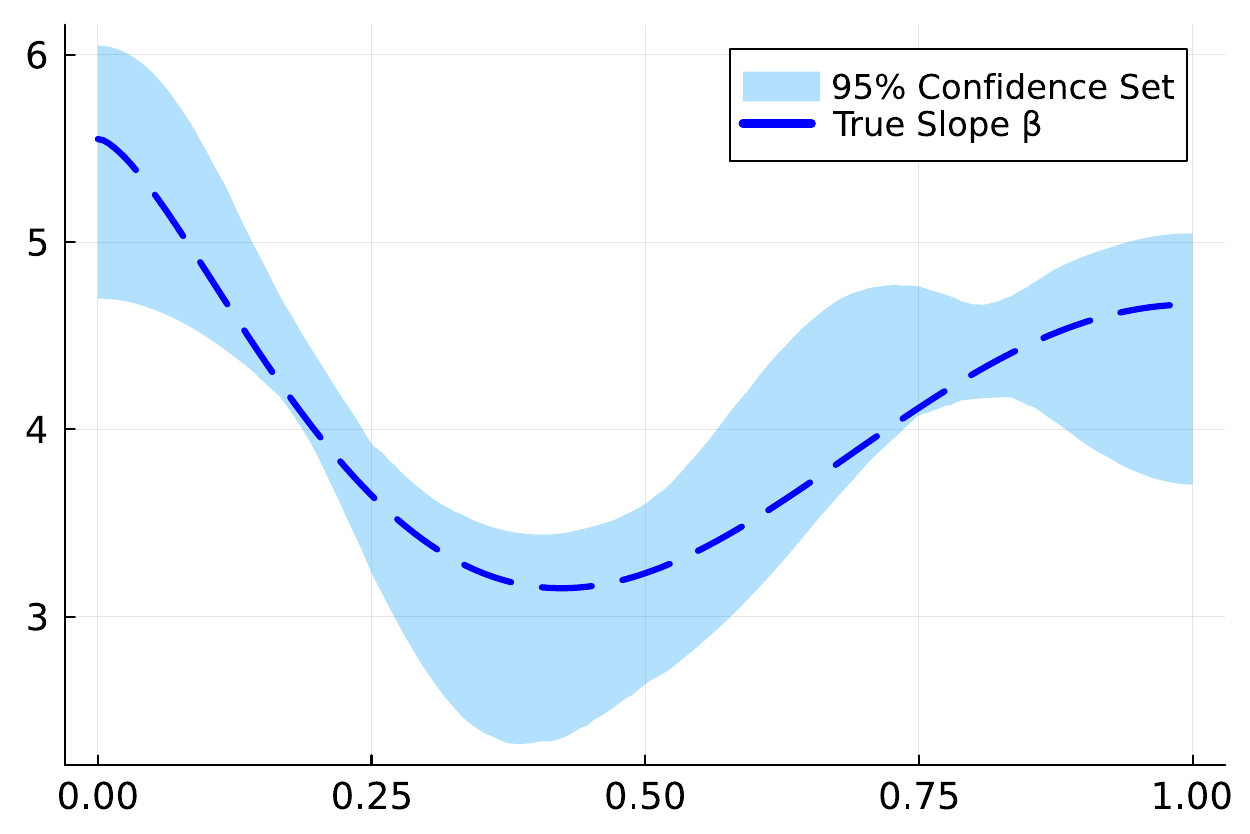}
		\caption{Model 1}
		\label{fig:cs1}
	\end{subfigure}\hfill
	\begin{subfigure}[b]{0.5\textwidth}
		\includegraphics[width=\textwidth]{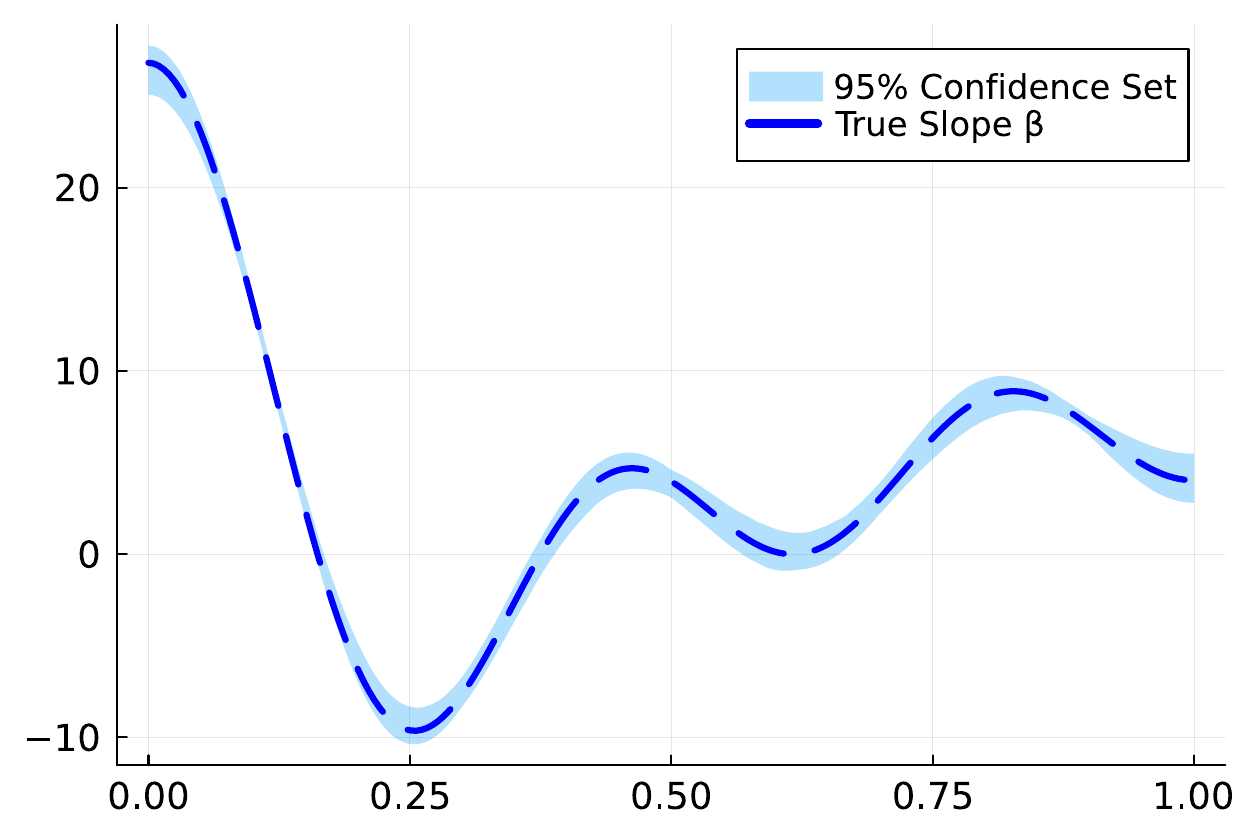}
		\caption{Model 2}
		\label{fig:cs2}
	\end{subfigure}\hfill
	\begin{subfigure}[b]{0.5\textwidth}
		\includegraphics[width=\textwidth]{cs_model1.pdf}
		\caption{Model 3}
		\label{fig:cs3}
	\end{subfigure}\hfill
	\caption{Confidence Sets. The figure plots the pointwise median values of the upper and lower bound of a $95\%$ confidence band across $5,000$ simulations.}
	\label{fig:confidence_set}
\end{figure}

\end{document}